\newtheorem{thm}{Theorem}[section]
\newtheorem{lemma}[thm]{Lemma}
\newtheorem{defn}[thm]{Definition}
\newtheorem{prop}[thm]{Proposition}
\newtheorem{cor}[thm]{Corollary}
\newtheorem{conje}[thm]{Conjecture}
\newtheorem{rem}[thm]{Remark}
\numberwithin{equation}{section}
\newtheorem{que}[thm]{Question}
\newtheorem{prob}[thm]{Problem}
\def\x{\mathrm{x} }
\def\y{\mathrm{y}}
\def\Z{\mathbb{Z}}
\def\R{\mathbb{R}}
\def\M{\mathrm{M}}
\def\W{\mathrm{W}}
\def\Barint_#1{\mathchoice
         {\mathop{\vrule width 6pt height 3 pt depth -2.5pt
                 \kern -8pt \intop}\nolimits_{#1}}%
         {\mathop{\vrule width 5pt height 3 pt depth -2.6pt
                 \kern -6pt \intop}\nolimits_{#1}}%
         {\mathop{\vrule width 5pt height 3 pt depth -2.6pt
                 \kern -6pt \intop}\nolimits_{#1}}%
         {\mathop{\vrule width 5pt height 3 pt depth -2.6pt
                 \kern -6pt \intop}\nolimits_{#1}}}
\renewcommand{\(}{\left(}
\renewcommand{\)}{\right)}
\newcommand{\diam}{diam}
\newcommand{\dist}{dist}
\begin{document}


\title[A Characterization of bi-Lipschitz embeddable metric spaces 
]{A Characterization of bi-Lipschitz embeddable metric spaces in terms of local bi-Lipschitz embeddability}

\author[J. Seo]{Jeehyeon Seo}
\thanks{This work was supported by US National Science Foundation Grant  DMS-0901620.}
\keywords{ Bi-Lipschitz embedding ; Uniformly perfect ; Whitney decomposition ; The Grushin Plane ; Coloring map}
\subjclass[2010]{Primary : 30L05 ; Secondary : 53C17}

\address
{Department of Mathematics, University of Illinois at Urbana Champaign\\
1409 West Green Street, Urbana, IL, 61801}
\email {seo6@illinois.edu}

\begin{abstract}
We characterize uniformly perfect, complete, doubling metric spaces which embed bi-Lipschitzly into Euclidean space. Our result applies in particular to spaces of Grushin type equipped with Carnot-Carath\'{e}odory distance. Hence we obtain the first example of a sub-Riemannian manifold admitting  such a bi-Lipschitz embedding. Our techniques involve a passage from local to global information, building on work of Christ and McShane. A new feature of our proof is the verification of the co-Lipschitz condition. This verification splits into a large scale case and a local case. These cases are distinguished  by a relative distance map which is associated to a Whitney-type decomposition of an open subset $\Omega$ of the space. We prove that if the Whitney cubes embed uniformly bi-Lipschitzly into a fixed Euclidean space, and if the complement of $\Omega$ also embeds, then so does the full space.
\end{abstract}


\maketitle


\section{Introduction}
A map between two metric spaces is bi-Lipschitz if distances in the image  and  source should not exceed distances in the source and image respectively by more than a fixed, universal multiplicative constant. More precisely, a map $f$ between metric spaces $(X,\,d_X)$ and $(Y,\, d_Y)$ is called bi-Lipschitz  if there exists  an  $L \geq1$ such that
\begin{equation}
\label{BL}
\frac{1}{L}\, d_X(x,\, y)\leq d_Y(f(x),\, f(y)) \leq L\,d_X(x,\,y)
\end{equation}
for all $x,\, y \in X$.

Bi-Lipschitz maps play a role in computer science as well as in many branches of mathematics. Solving the Sparsest cut problem approximately is  important  in the theory of approximation algorithms. The best known algorithm for this question is related to the Goemans-Linial conjecture \cite{Goemans, Linial2}. Recently, Cheeger and Kleiner \cite{Jeff} together with Lee and Naor \cite{Naor} gave an counterexample to the Goemans-Linial conjecture. They showed that the Heisenberg group admits a metric which is of negative type, yet  does not admit a bi-Lipschitz embedding into $L^1$.

Bi-Lipschitz maps are related to problems of differentiability by Rademacher's theorem. Lipschitz maps form the right substitute for smooth maps in the theory of analysis on metric spaces. We would like to know for which metric spaces the resulting analysis is genuinely new and for which ones the analysis can be seen as just classical analysis on a suitable subset of a Banach space. This leads to the question to characterize metric spaces that embed bi-Lipschitzly into classical Banach spaces. However, the characterization of metric spaces which are bi-Lipschitz equivalent to $\R^n$  or even of metric spaces which are bi-Lipschitzly embeddable into $\R^n$ remain  difficult open problems in Geometric Analysis.

 We are interested in the question of which metric spaces embed bi-Lipschitzly into Euclidean space. We state some progress on this problem. Assouad gave a partial answer: every snowflaked version of a doubling metric space embeds bi-Lipschitzly into some Euclidean space \cite{Assouad}. Even though  the theorem of Assouad completely answers the question which metric spaces are quasisymmetrically embeddable into Euclidean space, this result does not guarantee bi-Lipschitz  embeddability of the original metric space.  In particular, the Heisenberg group, which is a doubling metric space,  admits no  bi-Lipschitz embedding into Euclidean space. 
 Luosto \cite{Luosto} together with  Luukkainen and Movahedi-Lankarani \cite{Luukkainen} gave a precise relationship between Assouad dimension and dimension of receiving Euclidean space  for ultra metric spaces: an ultrametric space is bi-Lipschitzly embeddable into $\mathbb R^n$ if and only if its Assouad dimension is less than $n$.

Semmes \cite{Semmes2} showed that $\mathbb R^n$ equipped with  any metric $\delta_{\omega}$ deformed by $A_1$-weight $\omega$ admits a bi-Lipschitz embedding into some $\mathbb R^N$. However, $(\R^n, \delta_{\omega})$ may be not bi-Lipschitzly equivalent to $\R^n$. Bishop \cite{Bishop2} constructed a Sierpinski carpet $E\subset \R^2$ and an $A_1$-weight $\omega$ which blows up on $E$. In this construction, he showed that $w$ is not comparable to the Jacobian of any quasiconformal mapping. 

In this paper, we will characterize uniformly perfect complete metric spaces which admit a bi-Lipschitz embedding in terms of uniform local bi-Lipschitz embeddability. Indeed, uniform perfectness and existence of a doubling measure yield existence of a  Whitney-type decomposition. Furthermore, uniform local bi-Lipschitz embeddability of Christ cubes associated with such a decomposition implies global bi-Lipschitz embeddability.
\begin{thm}
\label{main1}
A uniformly perfect complete metric space $(X, d)$ admits a bi-Lipschitz embedding into some Euclidean space if and only if the following conditions hold:
\begin{enumerate}
\item it supports a doubling  measure $\mu$,
\item there exists a closed subset $Y$ of $X$ which admits a bi-Lipschitz embedding into some $\R^{M_1}$, 
\item $\Omega = X\setminus Y$ admits uniformly Christ-local bi-Lipschitz embeddings into some $\R^{M_2}$. 
\end{enumerate}
The bi-Lipschitz constant and dimension of receiving Euclidean space depend on the data of the metric space $X$, the doubling constant of $\mu$,  $M_1$, $ M_2$ and the bi-Lipschitz constants in conditions $(2)$ and $(3)$.
\end{thm}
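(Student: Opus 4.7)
The necessity direction is routine: if $F : X \to \R^N$ is bi-Lipschitz, then pushing Lebesgue measure back by $F$ gives a doubling measure on $X$, so (1) holds; any closed $Y \subseteq X$ satisfies (2) by restriction of $F$; and restricting $F$ to the Christ cubes of $\Omega := X \setminus Y$ verifies (3) with $M_2 = N$ and a uniform constant. The content of the theorem is the converse, which I now sketch.

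Assuming (1)--(3), I would first invoke Christ's construction on $(X,d,\mu)$ to build a nested family of ``cubes'' $Q_\alpha^k$ of diameter comparable to $\delta^k$. From this system I extract a Whitney-type subfamily $\{Q_j\}_{j\in J}$ covering $\Omega$ with $\mathrm{diam}(Q_j)\approx\mathrm{dist}(Q_j,Y)$; by (3) each $Q_j$ admits a bi-Lipschitz embedding $\varphi_j : Q_j \to \R^{M_2}$ with constants independent of $j$, which after affine rescaling I may normalize so that $\mathrm{Lip}(\varphi_j) \leq 1$ and the distortion is bounded by a fixed $L$. Using the doubling property of $\mu$ I produce a coloring $c: J \to \{1,\dots,K\}$, with $K$ depending only on the doubling data, such that any two cubes of the same color are metrically far apart relative to their sizes.

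Let $f_Y : Y \to \R^{M_1}$ be the embedding from (2); extend it coordinatewise to $\tilde f_Y : X \to \R^{M_1}$ by a McShane extension. For each color $k$ define $\Phi_k : X \to \R^{M_2}$ by summing the $\varphi_j$ for $c(j) = k$, each multiplied by a Lipschitz cutoff $\eta_j$ supported in a slight enlargement of $Q_j$ and rescaled by the relative distance $\rho(x) := \mathrm{dist}(x,Y)$ so that $\Phi_k$ extends by zero to $Y$. The candidate global map is
\[
F(x) \;=\; \bigl(\tilde f_Y(x),\ \rho(x),\ \Phi_1(x),\dots,\Phi_K(x)\bigr) \ \in\ \R^{M_1+1+KM_2}.
\]
The Lipschitz upper bound follows from standard estimates using bounded overlap of the supports of the $\eta_j$ together with the Lipschitz property of $\rho$ and $\tilde f_Y$.

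The core of the argument, and the step I expect to be the main obstacle, is the co-Lipschitz lower bound; following the scheme announced in the abstract, I split according to the relative distance map $\rho$. In the \emph{large-scale case}, where $d(x,y)$ is comparable to or larger than $\min\{\rho(x),\rho(y)\}$, at least one of $x,y$ is close to $Y$ and the pair is controlled by the combination of $\tilde f_Y$ and $\rho$, using that $f_Y$ is bi-Lipschitz on $Y$. In the \emph{local case}, where $d(x,y)\ll \min\{\rho(x),\rho(y)\}$, both points lie in Whitney cubes of comparable diameter and bounded combinatorial distance; by pigeonhole some color $k_0$ indexes a cube of this generation that simultaneously ``sees'' $x$ and $y$, and $\Phi_{k_0}$ there essentially agrees with $\varphi_j$, whose uniform bi-Lipschitz constant gives the lower bound. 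The delicate point is the transition between these regimes: distant cubes of the same color could a priori be mapped near one another by $\Phi_k$, which is why both the coloring (to separate same-color cubes) and the rescaling of cutoffs by $\rho$ (to kill the contribution of a cube $Q_j$ when one is far from it relative to $\mathrm{diam}(Q_j)$) are needed, and why $\rho$ and $\tilde f_Y$ must be appended to detect long-distance pairs. Assembling these estimates produces the required bi-Lipschitz embedding, with constants depending only on the data listed in the statement.
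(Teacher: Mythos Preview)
Your plan matches the paper's proof closely: same ingredients (Christ--Whitney cubes, McShane extension of $f_Y$, the coordinate $\rho=\dist(\cdot,Y)$, a coloring of the cubes, local embeddings times Lipschitz cutoffs) assembled into the same product map, and the same local/large-scale dichotomy for the co-Lipschitz bound. Two calibrations deserve care, since as written your case split does not quite close.

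First, the threshold separating ``local'' from ``large-scale'' must be a \emph{large} constant $C\approx M_1L_1^2$ (with $L_1$ the bi-Lipschitz constant of $f_Y$), not merely ``comparable''. If $d(x,y)\approx\rho(x)\approx\rho(y)$ with a small implied constant, neither $\tilde f_Y$ nor $\rho$ controls the pair: picking $z,z'\in Y$ nearest to $x,y$ one only gets $d(z,z')\ge d(x,y)-\rho(x)-\rho(y)$, which can vanish. The paper takes $d_{\mathrm W}(Q,R)\ge 16M_1L_1^2$ as the large-scale regime and then sub-splits it according to $\dist(Q,R)/\max(\diam Q,\diam R)$, using $\tilde f_Y$ when that ratio is large and $\rho$ when it is small. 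Second, once the threshold is large, your ``local'' case contains pairs $x\in Q$, $y\in R$ with $d_{\mathrm W}(Q,R)$ up to that threshold, so there need not be any cube whose enlargement contains both; your pigeonhole sentence does not cover this. The paper handles it by normalizing each local embedding so that its image on $Q$ lies in an annulus $|\,\cdot\,|\approx\diam Q$ (a translation, not just a rescaling to $\mathrm{Lip}\le 1$); then for $y$ outside the support of the cutoff one has $\Phi_{c(Q)}(y)=0$ while $|\Phi_{c(Q)}(x)|\gtrsim\diam Q\gtrsim d(x,y)$. With these two adjustments your outline becomes the paper's proof.
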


 We now discuss applications of Theorem \ref{main1} to the bi-Lipschitz embedding question for sub-Riemannian manifolds. For more information of Carnot-Carath\'{e}odory geometry, see \cite{Gromov}. Pansu \cite{Pansu} showed that a version of Rademacher's  differentiation theorem holds for Lipschitz maps on Carnot groups: every Lipschitz map between Carnot groups is almost everywhere differentiable in some sense and its differential is a Lie group homomorphism.  Semmes  observed  that Pansu's result implies that nonabelian Carnot groups admit no bi-Lipschitz embedding into Euclidean space (Theorem 7.1 in \cite{Semmes1}). Cheeger proved a remarkable extension of  Rademacher's theorem for doubling $p$-Poincar\'e spaces  and gave a corresponding  nonembedding theorem (see Section 10 and Theorem 14.3 in \cite{Cheeger1}). 

By Cheeger's theorem, we can deduce nonembeddability of certain regular sub-Riemannian manifolds. However, his result does not apply to singular sub-Riemannian manifolds. This paper is motivated by the question whether or not the Grushin plane embeds bi-Lipschitzly into Euclidean space. While the Grushin plane is one of the simplest singular sub-Riemannian manifold, the previous known nonembedding theorems do not apply.  In  contrast,  as an application of Theorem \ref{main1}  we will prove bi-Lipschitz embeddability of the Grushin plane.  This is  the first example of a sub-Riemannian manifold that embeds bi-Lipschitzly into Euclidean space. 
\begin{defn}
The Grushin plane $\mathbb G$ is $\R^2$ with horizontal distribution spanned by 
\begin{equation*}
X_1=\dfrac{\partial}{\partial x} \;\;\; \text{and} \;\;\; X_2=x\,\dfrac{\partial}{\partial y}.
\end{equation*}
\end{defn}
\begin{thm}
The  Grushin plane equipped with Carnot-Carath\'{e}odory distance admits a bi-Lipschitz embedding into some Euclidean space.
\end{thm}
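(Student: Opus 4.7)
The plan is to apply Theorem~\ref{main1} to $(\mathbb{G}, d_{CC})$ with $Y = \{(0, y) : y \in \mathbb{R}\}$, the singular line, and $\Omega = \mathbb{G} \setminus Y$. First I would verify the background hypotheses. Completeness of $(\mathbb{G}, d_{CC})$ follows from the Chow--Rashevskii theorem together with local compactness of $\mathbb{R}^2$; uniform perfectness is automatic since $\mathbb{G}$ is a geodesic space. The Lebesgue measure $\mathcal{L}^2$ is doubling for $d_{CC}$: a standard ball-box estimate yields $\mathcal{L}^2(B_{CC}(p,r)) \asymp r^2 \max(|x(p)|, r)$, which is doubling in $r$ uniformly in $p$.

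For condition $(2)$, the induced CC metric on the singular line satisfies
\[
d_{CC}\bigl((0, y_1),(0, y_2)\bigr) \asymp |y_1 - y_2|^{1/2},
\]
because any horizontal curve joining two points of $Y$ must leave $Y$ in order to generate $y$-displacement, and minimizing $\int \sqrt{\dot x^2 + (\dot y/x)^2}\,dt$ subject to a given $y$-displacement yields this half-power behavior. Hence $(Y, d_{CC}|_Y)$ is bi-Lipschitz equivalent to the $\tfrac{1}{2}$-snowflake of $\mathbb{R}$, which embeds bi-Lipschitzly into some $\mathbb{R}^{M_1}$ by Assouad's embedding theorem.

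For condition $(3)$, I would take a Christ-type decomposition $\{Q_\alpha\}$ of $\Omega$ associated to the doubling measure $\mathcal{L}^2$. By the Whitney property relative to $Y$, each cube has scale $r_\alpha \asymp \operatorname{dist}_{CC}(Q_\alpha, Y)$, and hence $|x(p)| \asymp r_\alpha$ uniformly for $p \in Q_\alpha$. On $\Omega$ the CC distance is the Riemannian distance of the smooth metric $dx^2 + x^{-2}dy^2$. Fixing a reference point $(x_\alpha, y_\alpha) \in Q_\alpha$ with $|x_\alpha| \asymp r_\alpha$ and applying the change of variables $\Phi_\alpha(x,y) = (x, y/x_\alpha)$, the pulled-back metric becomes $du^2 + (x_\alpha/x)^2 dv^2$, which is uniformly bi-Lipschitz to the flat metric $du^2 + dv^2$ on the image of $Q_\alpha$. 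This gives a uniformly bi-Lipschitz family of embeddings $\Phi_\alpha : Q_\alpha \to \mathbb{R}^2$, verifying condition $(3)$ with $M_2 = 2$.

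The main obstacle is the localization step: one must verify that CC distances between points of $Q_\alpha$ are realized, up to a uniform constant, by paths that remain in a controlled enlargement $cQ_\alpha \subset \Omega$, so that the change of variables above actually captures $d_{CC}$ restricted to $Q_\alpha$. The intuition is that any path dipping close to $Y$ traverses a region where $x^{-2}$ is large, inflating the Riemannian length beyond that of a direct path that stays where $|x| \asymp r_\alpha$. Once this localization is established, Theorem~\ref{main1} assembles the uniform embeddings of the $Q_\alpha$ together with the embedding of $Y$ into a global bi-Lipschitz embedding of $\mathbb{G}$ into some Euclidean space.
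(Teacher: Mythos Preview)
Your proposal is correct and follows the same overall architecture as the paper: apply Theorem~\ref{main1} with $Y$ the singular axis, use Assouad on $(Y,d_{cc})\simeq(\mathbb{R},\sqrt{d_E})$ for condition~(2), and verify uniform Christ-local embeddability of $\Omega$ for condition~(3).

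The difference lies in how condition~(3) is checked. The paper argues more abstractly: it uses the dilations $\delta_\lambda(x,y)=(\lambda x,\lambda^2 y)$ together with the $y$-translations to identify all $Q^{**}$ with a single model Riemannian patch, covers that model by a fixed finite family of balls carrying bi-Lipschitz charts into $\mathbb{R}^2$, and glues the charts with cutoffs into a map to $\mathbb{R}^{4N}$. Your explicit change of variables $\Phi_\alpha(x,y)=(x,\,y/x_\alpha)$ is more direct and lands already in $\mathbb{R}^2$; it is essentially the linearized content of the paper's dilation argument, and is cleaner for this particular space. One small point: Definition~\ref{measurelocal} asks for embeddings of $Q^{**}$, not just $Q$, but your argument extends verbatim since $|x|\asymp r_\alpha$ persists throughout $Q^{**}$ by Proposition~\ref{comparability}.

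The localization obstacle you flag is genuine and is exactly what the paper's distance estimate~\eqref{G2} handles. The lower bound there is proved via the dichotomy you describe: either a competing curve stays in $\{|x|\le K\}$ and pays at least $|y_1-y_2|/K$, or it reaches $|x|=K$ and pays at least $K-\min(|x_1|,|x_2|)$; optimizing over $K$ yields a lower bound which, on $Q^{**}$ where $|y_1-y_2|\lesssim r_\alpha^2$, is comparable to $|x_1-x_2|+|y_1-y_2|/r_\alpha \asymp |\Phi_\alpha(p)-\Phi_\alpha(q)|$. So your intuition is correct, and the paper supplies precisely the quantitative estimate your argument needs.
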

 
The structure of this paper follows. In the second section, we shall see Assouad's embedding theorem and Lipschitz extension theorem.  We will review Michael Christ's construction of a system of dyadic cubes \cite{Michael} in doubling metric spaces. We will next construct a Whitney-type decomposition  which we call a Christ-Whitney decomposition (Lemma \ref{W-type}) for a uniformly perfect space supporting a doubling measure. We will also introduce some definitions and lemmas which set the stage for  Theorem \ref{main1}.

In the following section,  we shall characterize bi-Lipschitz embeddable metric spaces by proving Theorem \ref{main1}.  To this end, we first apply McShane's  extension theorem to extend a Lipschitz map on $Y$ to $X$. We introduce the Whitney distance map $d_\mathrm{W}$ (Definition \ref{W-distance}). It is the key tool for construction of a co-Lipschitz map. We break the Christ-Whitney decomposition into two parts using the Whitney distance map. After some basic preliminaries, we will construct a W-local co-Lipschitz and  W-large scale co-Lipschitz map on these parts (Lemma \ref{W-lem1} and  Lemma \ref{W-lem2}).

In Section 3, we discuss applications of Theorem \ref{main1} to the bi-Lipschitz embedding question for sub-Riemannian manifolds.  We prove bi-Lipschitz embeddability of the Grushin plane into some Euclidean space.

\section*{Acknowledgments}
I am grateful to my thesis adviser, Jeremy T. Tyson for suggesting the problem and for many stimulating conversations.  I am also greatly indebted to John Mackay.  The distinction between the W-large and  the W-local co-Lipschitz conditions was suggested by Prof. Mackay when I took reading courses in Fall 2009 and in  Spring 2010. I also want to thank Jang-Mei  Wu, John P. D'Angelo, and  Sergiy Merenkov for their careful reading and comments.

\section{Preliminaries}
\subsection{Notation and Terminology}
For  a metric space $X= (X, \,d)$, we write $\diam(A)$ (or $\diam_d(A)$ in case we need to mention the metric) for the diameter of a set $A \subset X$, and $\dist(A, B)$ for the distance between nonempty sets $A,\; B \subset X$. We abbreviate $\dist(A, \,x) =\dist(A, \,\{x\})$ for a set $A\subset X$ and $x\in X$. We write $d_E$ for the Euclidean metric. As customary, we let $C,\,c,\cdots$ denote finite positive constants. These constants may depend on auxiliary data $a,\, b$, etc ; we indicate this by writing $C(a,\,b)$ or $c(a,\,b)$. We also write $a\lesssim b$ if there is a constant $C$ such that $a\leq C\, b$.

We recall that the map $f:X \rightarrow Y$ is a bi-Lipschitz embedding if \eqref{BL} holds. We do not assume that $f$ is onto.  We say an invertible map $f: X\rightarrow Y$ between metric spaces is co-Lipschitz if $f^{-1}$ is Lipschitz. We call  any constant  $L$ satisfying \eqref{BL} a bi-Lipschitz constant for $f$.
\begin{defn}
A metric space $(X, \,d)$ is uniformly perfect if there exists a constant $A>0$ such that for each $x \in X$ and $0<r < \diam X$ there is a point $y \in X$ which satisfies $A^{-1}r\leq d(x, y) \leq r$. We say that $(X,\,d)$ is $A$-uniformly perfect.
\end{defn}
Uniform perfectness implies nonexistence of separating annuli of large modulus and nonexistence of isolated points.  Every connected metric space is uniformly perfect. In an $A$-uniformly  perfect space, $\overline{B}(x,\, r) \setminus B(x,\, A^{-1}r)$ is nonempty for all $x\in X$ and $0<r<\diam X$ and so $ A^{-1} r\leq \diam B(x,\, r) \leq 2r$.

The doubling condition provides a kind of boundedness of the geometry of the space. 
\begin{defn} A Borel measure $\mu$  in a metric space is called doubling if balls have finite and positive measure for any nonempty ball and there is a constant $D\geq 1$ such that 
\begin{equation}
\label{doubling}
\mu(B(x,2r)) \leq D\mu(B(x,r))
\end{equation}
for all $x \in X$ and $r>0$. We call $D$ a doubling constant.
\end{defn}
\begin{defn}
A metric space is called doubling if there is a constant $C$ so that every set of diameter $d$  in the space can be covered by at most $C$ sets of diameter at most $d/2$.
\end{defn}

\subsection{Basic Theorems}
In this section, we recall Assouad's embedding theorem and McShane's Lipschitz extension theorem. 
 \begin{thm}[Assouad \cite{Assouad}]
\label{Assouad}
Each snowflaked version of a doubling metric space admits a  bi-Lipschitz embedding into some Euclidean space. If $ 0<\epsilon<1$, then $(\mathbb R, \,{d_{E}}^{\epsilon})$ embeds bi-Lipschitzly into $\mathbb R^k$, where $k$ is the smallest integer which is greater than $\frac{1}{\epsilon}$.
\end{thm}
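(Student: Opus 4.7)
The plan is to follow Assouad's original scale-by-scale embedding strategy, using the doubling condition to control local multiplicity and to color maximal nets with a universally bounded number of colors, and then exploiting the snowflake exponent $\epsilon<1$ to ensure convergence of the resulting series.

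First, fix $\delta\in(0,1)$ (say $\delta=1/2$) and for each $n\in\Z$ choose a maximal $\delta^n$-separated net $N_n\subset X$. The doubling condition supplies a uniform bound $M$ on the number of points of $N_n$ that can lie inside any ball of radius $C\delta^n$. A standard greedy argument then produces, for each $n$, a coloring $\chi_n:N_n\to\{1,\dots,K\}$ with $K=K(M)$ colors such that any two points sharing a color are separated by at least $C\delta^n$. Arranging for $\chi_n$ to depend only on $n\pmod p$ for a suitable period $p$ keeps the number of colors globally finite.

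Next, I would define the embedding scale by scale. For each $n$ and each color $c$, pick a vector $v_{n,c}\in\R^N$ and set
\begin{equation*}
F_{n,c}(x)=\delta^{n\epsilon}\,\varphi\!\left(\frac{d(x,N_n^c)}{\delta^n}\right)v_{n,c},
\end{equation*}
where $N_n^c$ denotes the $c$-colored points of $N_n$ and $\varphi$ is a fixed Lipschitz bump supported near $[0,2]$. The candidate map is $F=\sum_{n,c}F_{n,c}:(X,d^\epsilon)\to\R^N$. The upper bi-Lipschitz bound is routine: at each scale only $O(1)$ terms are effective near any point, each having Lipschitz norm $O(\delta^{n(\epsilon-1)})$ relative to $d$, and the resulting geometric series converges precisely because $\epsilon<1$.

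The principal difficulty is the lower (co-Lipschitz) bound. Given $x,y\in X$ with $d(x,y)=r$, I would locate the unique scale $n_0$ with $\delta^{n_0+1}<r\leq\delta^{n_0}$. The nearest point in $N_{n_0}$ to $x$ lies at distance $\lesssim\delta^{n_0}$, while $y$ sits at distance $\gtrsim r\sim\delta^{n_0}$ from it, so for the appropriate color $c$ the difference $|\varphi(d(x,N_{n_0}^c)/\delta^{n_0})-\varphi(d(y,N_{n_0}^c)/\delta^{n_0})|$ is bounded below by a universal constant, giving a contribution of size $\delta^{n_0\epsilon}\sim r^\epsilon$ to $\|F(x)-F(y)\|$. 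The main technical point is ruling out cancellation from other scales: coarser scales contribute at most $r\sum_{n<n_0}\delta^{n(\epsilon-1)}\sim r^\epsilon$ times a small constant from the Lipschitz norm of $\varphi$, and finer scales contribute at most $\sum_{n>n_0}\delta^{n\epsilon}\sim \delta^{n_0\epsilon}$ times a small geometric factor. Choosing the color vectors $v_{n,c}$ to be nearly orthogonal across adjacent scales prevents these tails from overwhelming the scale-$n_0$ signal.

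For the explicit claim $(\R,d_E^\epsilon)\hookrightarrow\R^k$ with $k$ the least integer greater than $1/\epsilon$, I would carry out this construction with $N_n=\delta^n\Z$ and a $3$-coloring (two colors would suffice to separate, but three are convenient for the greedy bookkeeping). A careful direct calculation shows that the smallest number of mutually independent coordinate directions one needs to separate the surviving tail sums is exactly $\lceil 1/\epsilon\rceil+\mathbf{1}_{1/\epsilon\in\Z}$, i.e. the smallest integer strictly greater than $1/\epsilon$, recovering the stated dimension bound in this concrete setting.
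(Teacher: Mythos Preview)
The paper does not supply its own proof of this theorem: it is quoted as Assouad's result with a citation, followed only by a one-sentence description of the method (``Assouad builds a multiscale family of maps on scale $2^{-j}$ for each $j\in\Z$ and glues these maps together into an embedding using $2^{-j}$-nets and a coloring map''). So there is no in-paper argument to compare against; your sketch is being measured only against Assouad's original proof, whose outline the paper summarizes.

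Your proposal follows exactly that outline---multiscale nets, bounded coloring from doubling, Lipschitz bumps, geometric series via $\epsilon<1$---so at the level of strategy you are aligned with both Assouad and the paper's summary. Two points, however, are genuinely underdeveloped. First, in the co-Lipschitz estimate the tail sums from both coarser and finer scales are, as you yourself compute, of the \emph{same} order $r^\epsilon$ as the main term; the sentence ``choosing the color vectors $v_{n,c}$ to be nearly orthogonal across adjacent scales'' is precisely where the work lies, and it is where the target dimension $N$ is actually determined. One must assign the scale-$n$ contributions to distinct coordinate blocks according to $n\bmod p$ for a period $p$ large enough that the geometric tail within each block is strictly smaller than the main term; this is not a cosmetic choice but the mechanism that produces injectivity, and your sketch stops short of carrying it out. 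Second, the final paragraph on $(\R,d_E^\epsilon)\hookrightarrow\R^k$ is an assertion rather than an argument: the claim that ``a careful direct calculation shows'' the minimal number of directions equals the smallest integer exceeding $1/\epsilon$ is exactly what needs to be demonstrated, and it does not fall out of the general construction without a separate, sharper analysis specific to the line.
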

The identity snowflaking $(X, \,d) \rightarrow (X,\, d^{\epsilon})$ is $t^{\epsilon}$-quasisymmetric and hence, each metric space is quasisymmetrically embedded in Euclidean space if and only if it is doubling. However, Assouad's theorem does not answer whether or not the original metric space embeds bi-Lipschitzly. For example,  whereas the snowflaking of the Heisenberg group endowed with Carnot-Carath\'{e}odory distance, $(\mathbb H, \,{d_{cc}}^{\epsilon})$, 
admits a bi-Lipschitz embedding into some Euclidean space, the Heisenberg group is not bi-Lipschitzly embeddable into any Euclidean space. Such nonembeddability is a consequence of Pansu's Rademacher-type theorem \cite{Pansu} as observed by Semmes \cite{Semmes1}. It also follows from Cheeger's nonembedding theorem \cite{Cheeger1}. 

To prove Theorem \ref{Assouad}, Assouad builds a multiscale family of maps on scale $2^{-j}$ for each $j \in \Z$ and glues these maps together into an embedding using $2^{-j}$-nets and a coloring map. A similar idea will appear in the proof of Theorem \ref{mainlem}. In fact, we shall consider a Whitney-type decomposition instead of nets and use a coloring map to increase dimension of receiving Euclidean space.

With some restrictions on $X$ and $Y$, and  for $A \subset X$, 
every Lipschitz function $f:A \rightarrow Y$ can be extended to a Lipschitz function $F: X \rightarrow Y$.  We recall McShane's Lipschitz extension theorem. Since McShane's Lipschitz extension map has no restriction on the source space, it is useful for our purpose. For further information, see \cite{Juha1} \cite{Juha2}.
\begin{thm}[McShane]
\label{McShane}
Let $X$ be an arbitrary metric space.
If $A\subset X$ and $f :A \rightarrow \mathbb R$ is $L$-Lipschitz, then there exists an $L$-Lipschitz function $F : X \rightarrow \mathbb R$ which extends $f$.
i.e. $F|_{A}= f$.
\end{thm}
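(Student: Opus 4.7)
The plan is to write down an explicit formula for $F$ and verify the two required properties directly. Since any $L$-Lipschitz extension of $f$ must lie between the natural lower and upper envelopes of $f$, the obvious candidate is
\[
F(x) := \inf_{a \in A}\bigl(f(a) + L\,d(x,a)\bigr).
\]
The first thing I would check is that this infimum is finite for every $x \in X$. Fixing any basepoint $a_0 \in A$ and combining the $L$-Lipschitz property of $f$ on $A$ with the triangle inequality, one gets, for every $a \in A$,
\[
f(a) + L\,d(x,a) \;\geq\; f(a_0) - L\,d(a_0,a) + L\,d(x,a) \;\geq\; f(a_0) - L\,d(a_0,x),
\]
so $F(x) \geq f(a_0) - L\,d(a_0,x) > -\infty$.

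Next I would verify that $F$ restricts to $f$ on $A$. For $x \in A$, taking $a = x$ in the defining infimum gives $F(x) \leq f(x)$ immediately. For the reverse inequality, the $L$-Lipschitz hypothesis on $A$ yields $f(a) - f(x) \geq -L\,d(x,a)$ for every $a \in A$, i.e.\ $f(a) + L\,d(x,a) \geq f(x)$, whence $F(x) \geq f(x)$ after taking the infimum.

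Finally, to show that $F$ is $L$-Lipschitz on all of $X$, I would fix $x, y \in X$ and, for arbitrary $a \in A$, use the triangle inequality $d(x,a) \leq d(x,y) + d(y,a)$ to obtain
\[
f(a) + L\,d(x,a) \;\leq\; f(a) + L\,d(y,a) + L\,d(x,y).
\]
Taking the infimum over $a \in A$ on the right gives $F(x) \leq F(y) + L\,d(x,y)$; swapping the roles of $x$ and $y$ yields $|F(x) - F(y)| \leq L\,d(x,y)$.

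There is essentially no serious obstacle here: the argument is entirely self-contained and uses only the triangle inequality together with the $L$-Lipschitz hypothesis on $A$, with no structural assumptions on $X$ beyond being a metric space. The one mild subtlety worth flagging is the well-definedness of the infimum, which is precisely why I would spell out the lower bound step explicitly before turning to the extension and Lipschitz estimates. An equally natural alternative would be the dual formula $F(x) = \sup_{a \in A}\bigl(f(a) - L\,d(x,a)\bigr)$, which produces the pointwise largest $L$-Lipschitz extension; the infimum formula above produces the smallest, and any convex combination works as well.
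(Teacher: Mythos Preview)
Your argument is correct and is precisely the classical McShane construction. Note, however, that the paper does not actually supply a proof of this statement: Theorem~\ref{McShane} is simply quoted as a known result with references, so there is no proof in the paper to compare against. Your write-up is the standard textbook proof and would serve perfectly well were one needed.
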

\begin{cor}[McShane]
\label{McShane2}
Let $f:A \rightarrow \mathbb R^M$ where $A\subset X$, be an $L$-Lipschitz function. Then, there exists an $\sqrt{M}L$-Lipschitz function $F:X \rightarrow \mathbb R^M$ such that $F|_{A}=f$.
\end{cor}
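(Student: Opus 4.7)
The plan is to decompose the target $\mathbb{R}^M$-valued map into its $M$ real-valued coordinate functions, apply Theorem \ref{McShane} to each coordinate separately, and then reassemble. The only loss of constant comes from comparing the $\ell^\infty$-style coordinate bound to the $\ell^2$ Euclidean norm, which is where the $\sqrt{M}$ factor enters.

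Concretely, I would write $f = (f_1, \ldots, f_M)$ with $f_i : A \to \mathbb{R}$. Since the $i$-th coordinate projection $\pi_i : \mathbb{R}^M \to \mathbb{R}$ is $1$-Lipschitz, each $f_i = \pi_i \circ f$ inherits the estimate
\[
|f_i(x) - f_i(y)| \le |f(x) - f(y)| \le L\, d(x,y)
\]
for all $x, y \in A$, so $f_i$ is $L$-Lipschitz as a real-valued map on $A$. Theorem \ref{McShane} then yields an $L$-Lipschitz extension $F_i : X \to \mathbb{R}$ with $F_i|_A = f_i$.

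Define $F : X \to \mathbb{R}^M$ by $F(x) = (F_1(x), \ldots, F_M(x))$. By construction $F|_A = f$. For any $x, y \in X$, squaring and summing gives
\[
|F(x) - F(y)|^2 = \sum_{i=1}^M |F_i(x) - F_i(y)|^2 \le \sum_{i=1}^M L^2 d(x,y)^2 = M\, L^2\, d(x,y)^2,
\]
so $F$ is $\sqrt{M}\,L$-Lipschitz, as required.

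There is essentially no hard step here; the argument is mechanical once one observes that McShane's scalar extension theorem can be applied in parallel to each coordinate. The slight cost is the dimension-dependent factor $\sqrt{M}$, which is unavoidable with this componentwise method (one could only remove it by invoking a genuinely vector-valued extension result such as Kirszbraun's theorem, but for our later applications the $\sqrt{M}L$ bound is sufficient).
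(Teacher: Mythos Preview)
Your argument is correct and is exactly the standard componentwise extension that the paper has in mind; the paper states this result as a corollary of Theorem~\ref{McShane} without giving a separate proof, and your write-up simply makes explicit the routine coordinate-by-coordinate application.
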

\subsection{Christ-Whitney Decomposition}
As Euclidean space has a system of dyadic cubes, every doubling metric measure space also has a system of sets akin to classical dyadic cubes. The following Proposition \ref{mchrist} may be transparent if we think of $Q_{\alpha}^{k}$ as being essentially a cube of diameter roughly $\delta^{k}$ with center $z_{\alpha}^k$.  When $Q_{\beta}^{k+1} \subset Q_{\alpha}^{k}$, we say that $Q_{\beta}^{k+1}$ is a child of $Q_{\alpha}^{k}$ and $Q_{\alpha}^{k}$ is a parent of $Q_{\beta}^{k+1}$.
\begin{prop}[Christ \cite{Michael}]
\label{mchrist}
Let $(X, d, \mu)$ be a doubling metric measure space.
Then, there exists a collection of open subsets $\{Q_{\alpha}^k \subset X \;| \; k \in \Z \;,\; \alpha\in I_k \} $ where $I_k$ is some index set depending on $k$,  and constants $\delta\in (0,\,1) \;, a_0 \in (0,\,1)$, $\eta > 0$ and $C_1,\; c <\infty$ such that
\begin{enumerate}
\item $\mu(X\setminus \cup_{\alpha\in I_k }Q_{\alpha}^{k} )=0 ,\;\; \text{for all}\;\; k \in \Z$.
\item For any $\alpha$, $\beta$, $k$, and $l$ with $l \geq k$,  either $ Q_{\beta}^{l} \subset Q_{\alpha}^{k}$ or $Q_{\beta}^{l} \bigcap Q_{\alpha}^{k}=\emptyset$.
\item Each $Q_{\alpha}^{k}$ has exactly one parent and at least one child for all $k\in \Z$.
\item For each  $\(\alpha, k\)$, there exists $ z_{\alpha}^k \in X $such that 
$ B(z_{\alpha}^k,\, a_0\delta^k) \subset Q_{\alpha}^k \subset B(z_{\alpha}^k, \,C_1\delta^k)$.
 \end{enumerate}

\end{prop}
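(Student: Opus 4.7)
The plan is to follow Christ's original argument, which constructs the cubes in three stages: build a tower of nets, organize the net points into a parent-child tree, and then assemble each cube as the union of the balls attached to the subtree rooted at its center.

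First I would fix $\delta \in (0,1)$ to be chosen sufficiently small (the right smallness will be forced by the geometric inclusions in (4)). For each $k \in \mathbb{Z}$, choose by Zorn's lemma a maximal $\delta^k$-separated subset $\{z_\alpha^k : \alpha \in I_k\}$ of $X$. By maximality this family is automatically a $\delta^k$-net, so every point of $X$ lies within distance $\delta^k$ of some $z_\alpha^k$, while $d(z_\alpha^k, z_{\alpha'}^k) \geq \delta^k$ for $\alpha \neq \alpha'$. The doubling of $\mu$ (together with $\mu$ being positive and finite on balls) ensures that $I_k$ is countable and the nets are locally finite, which will be needed for summability arguments.

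Next I would impose a tree structure on $\bigcup_k I_k$. For each $k$ and each $\alpha \in I_k$, pick as the parent the index $\beta \in I_{k-1}$ minimizing $d(z_\alpha^k, z_\beta^{k-1})$, breaking ties by a fixed well-ordering of $I_{k-1}$; by the net property this parent satisfies $d(z_\alpha^k, z_\beta^{k-1}) < \delta^{k-1}$. I would then define the pre-cube $\tilde Q_\alpha^k$ to be the union, over all descendants $\gamma$ of $\alpha$ in this tree, of small balls $B(z_\gamma^l, c\delta^l)$ for an appropriate constant $c$, or equivalently the set of points whose nearest-center chain at successive scales passes through $z_\alpha^k$. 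Nestedness (property (2)) is then automatic from the tree, and property (3) — exactly one parent, at least one child — follows because $z_\alpha^k$ itself, after reindexing, appears as a net point at every finer scale $l \geq k$ and is tautologically its own closest ancestor.

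For property (4), the outer inclusion $\tilde Q_\alpha^k \subset B(z_\alpha^k, C_1 \delta^k)$ would come from summing the triangle-inequality bound $d(z_\gamma^l, z_\alpha^k) \leq \sum_{j=k+1}^{l} \delta^{j-1} \leq \delta^k/(1-\delta)$ along the ancestor chain, so $C_1 = \delta/(1-\delta) + c$ works provided $\delta$ is small. The inner inclusion $B(z_\alpha^k, a_0 \delta^k) \subset \tilde Q_\alpha^k$ requires showing that any point close to $z_\alpha^k$ has $z_\alpha^k$ as its nearest ancestor at level $k$, which is again a geometric-series computation once $\delta$ is sufficiently small and $a_0$ is sufficiently smaller than $c$. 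Finally, I would pass to the interiors $Q_\alpha^k := \mathrm{int}\,\tilde Q_\alpha^k$ to obtain openness, and verify (1) by showing that the tie-breaking set (points equidistant to two different parents at some scale) has $\mu$-measure zero using the doubling hypothesis together with a standard perturbation of the tie-breaking rule.

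The main obstacle is the simultaneous calibration of the three constants $\delta$, $a_0$, $C_1$ so that ball-inclusion, nesting, and measure-null boundary all hold at every scale. Concretely, $\delta$ must be small enough that $\delta/(1-\delta)$ controls the wandering of the descendant chains, yet large enough relative to $a_0$ that the nearest-ancestor rule robustly identifies the same cube on a definite ball. Once these constants are locked in, the remaining verifications — countability of $I_k$, finite overlap of the enveloping balls, and $\mu$-negligibility of the boundaries — follow from the doubling property in a routine way.
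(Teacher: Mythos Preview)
The paper does not prove this proposition at all; it is quoted from Christ \cite{Michael} and used as a black box. Your sketch follows Christ's original construction and is essentially on target: build nested nets, organize them into a tree by nearest-ancestor assignment, and assemble cubes from subtrees.

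Two points deserve more care. First, property (3), ``at least one child,'' is not automatic unless you arrange the nets to be nested, i.e., each $\delta^{k}$-net contains the $\delta^{k-1}$-net; your remark that $z_\alpha^k$ ``after reindexing, appears as a net point at every finer scale'' presupposes exactly this, so you should build the nets inductively to guarantee it. Second, the passage from $\tilde Q_\alpha^k$ to open cubes with $\mu(X\setminus \cup_\alpha Q_\alpha^k)=0$ is the genuinely delicate step in Christ's proof; it is not handled by a ``perturbation of the tie-breaking rule'' but by a probabilistic or combinatorial argument showing the boundary layers $\{x : \dist(x, X\setminus Q_\alpha^k) \leq t\,\delta^k\}$ carry small $\mu$-mass (this is where the unexplained constants $\eta$ and $c$ in the statement enter). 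Your outline is correct in spirit, but that boundary estimate is where the real work lies.
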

We now introduce a Whitney-type decomposition on an open subset of a uniformly perfect  metric space supporting a doubling measure. As open subset of Euclidean space has a Whitney decomposition from a system of dyadic  cubes, we have a Whitney-type decomposition from a system of Christ cubes. We call it a Christ-Whitney decomposition. This decomposition has a comparability condition (see $(4)$ Lemma \ref{W-type}) in addition to all conditions of a Whitney decomposition. This comparability condition together with doubling condition will play an important role in the proof of Lemma \ref{number}, which yields existence of a coloring map in Lemma \ref{color}.
\begin{lemma}
\label{W-type}
Suppose that $(X, d, \mu)$ is a $A$-uniformly perfect metric space supporting a doubling metric measure,  $Y$ is a closed subset of $X$, and  $\Omega = X\setminus Y$. Then $\Omega$ has a Christ-Whitney decomposition $\M_{\Omega}$  satisfying the following properties:
\begin{enumerate}
\item $\mu(\Omega \setminus \cup_{Q\in \M_{\Omega}}Q)=0$.
\item  $\diam(Q)\leq \dist(Q,\, Y) \leq \dfrac{4C_1\,A}{\delta}\,\diam(Q)$.
\item  $Q\cap Q^{'}= \emptyset$.
\item For any $Q \in \M_{\Omega}$, there exists $x \in \Omega$ such that
\begin{equation*}
 (*)\; B(x, \,a_0\delta^k) \subset Q \subset B(x, \, C_1\delta^k)
 \end{equation*} 
 for some $k $.
\end{enumerate}
The constants $\delta$, $a_0$ and $C_1$ are deduced from Proposition \ref{mchrist}. 
\end{lemma}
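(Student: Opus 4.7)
The plan is to mimic the classical Whitney decomposition of an open set in $\R^n$, using Christ's dyadic cubes in place of standard dyadic cubes. Among the collection $\{Q_\alpha^k\}$ from Proposition \ref{mchrist}, I would select the ``maximal admissible'' ones, calling a cube $Q$ \emph{admissible} when $Q \subset \Omega$ and $\diam(Q) \leq \dist(Q, Y)$. Formally, set
\[
\M_\Omega := \{Q_\alpha^k : Q_\alpha^k \text{ is admissible but its parent } \widetilde{Q_\alpha^k} \text{ is not}\}.
\]
The tree structure of the Christ cubes (Proposition \ref{mchrist}(2)) is the substitute for dyadic nesting in the Euclidean argument, while uniform perfectness will keep their sizes comparable to the generation parameter.

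For covering (1), fix $x \in \Omega$ lying in a Christ cube at every level, which is $\mu$-a.e.\ true by Proposition \ref{mchrist}(1). Writing $Q_x^k$ for the level-$k$ Christ cube containing $x$, one has $\diam(Q_x^k) \leq 2C_1 \delta^k \to 0$, while $\dist(x, Y) > 0$ since $Y$ is closed and $x \in \Omega$. Hence $Q_x^k$ is admissible for all sufficiently large $k$ and inadmissible for all sufficiently small $k$; the smallest admissible level produces a cube of $\M_\Omega$ containing $x$. Disjointness (3) follows from maximality: if $Q \subsetneq Q' \in \M_\Omega$ meet, then the parent $\widetilde{Q}$ of $Q$ is contained in $Q'$, so $\widetilde{Q} \subset \Omega$ and $\dist(\widetilde{Q}, Y) \geq \dist(Q', Y) \geq \diam(Q') \geq \diam(\widetilde{Q})$, making $\widetilde{Q}$ admissible, a contradiction.

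The core estimate is comparability (2). The lower bound $\diam(Q) \leq \dist(Q, Y)$ is built into the definition of $\M_\Omega$. For the upper bound, let $Q = Q_\alpha^k \in \M_\Omega$ and let $\widetilde{Q}$ be its parent at level $k-1$. By maximality, either $\widetilde{Q}$ meets $Y$ or $\dist(\widetilde{Q}, Y) < \diam(\widetilde{Q})$; in either case a triangle inequality argument gives $\dist(Q, Y) \leq 2\diam(\widetilde{Q}) \leq \tfrac{4C_1}{\delta}\delta^k$. Uniform perfectness together with Proposition \ref{mchrist}(4) yields $\diam(Q) \geq A^{-1} a_0 \delta^k$ at levels where $a_0\delta^k < \diam X$, and combining gives the upper bound on $\dist(Q, Y)/\diam(Q)$ in the required form. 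Condition (4) is exactly the ball inclusion from Proposition \ref{mchrist}(4).

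The main obstacle, in my view, will be handling extreme scales. Christ's hierarchy extends over all $k \in \Z$, and at sufficiently negative $k$ a cube may fill up (almost all of) $X$, so that ``parent'' and the uniform-perfectness lower bound $\diam B(z,r) \geq A^{-1}r$ (valid only for $r < \diam X$) lose meaning. I would address this by restricting to levels with $C_1\delta^k < \diam X$ (so that no selected cube exhausts $X$), treating admissible cubes with no admissible parent in the hierarchy separately, and verifying that any resulting multiplicative slack can be absorbed into the comparability constant $\tfrac{4C_1 A}{\delta}$ without disturbing its dependence solely on $A$, $\delta$, $a_0$, $C_1$.
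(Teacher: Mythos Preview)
Your approach is correct and follows the classical Whitney recipe more literally than the paper does. The paper instead slices $\Omega$ into dyadic distance layers $\Omega_k = \{x : 4C_1\delta^k < \dist(x,Y) \leq 4C_1\delta^{k-1}\}$, collects into a preliminary family $\M_0$ every level-$k$ Christ cube that meets $\Omega_k$, verifies property~(2) for all of $\M_0$ directly from the layer definition, and only afterward passes to maximal cubes to enforce disjointness. Your maximal-admissible selection builds disjointness in from the start and extracts the upper bound in~(2) from the failure of admissibility at the parent; the paper's layer approach makes~(2) nearly immediate but postpones disjointness to a refinement step. Both arrive at the same decomposition up to constants---note that your lower bound $\diam(Q)\geq A^{-1}a_0\delta^k$ carries an $a_0$ that the stated constant $\tfrac{4C_1A}{\delta}$ suppresses, but the paper's own line $\diam(Q)\geq A^{-1}\delta^k$ is equally loose on this point, so you are not losing anything the paper actually proves. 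Your worry about extreme scales is legitimate but minor: whenever $Y\neq\emptyset$ and $\overline{\Omega}$ meets $Y$, sufficiently coarse cubes have $\dist(Q,Y)$ bounded while $\diam(Q)$ grows (or the cube already touches $\overline{Y}$), so admissibility fails and a maximal level exists; the residual clopen case $\dist(\Omega,Y)>0$ is handled trivially.
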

\begin{rem}
We say that $Q$ is $(C_1, \,a_0)$-quasiball if  $(*)$ holds for some $x$ and $\delta$.
From now on, we will call a ball $B(x, \,C_1\delta^k)$ containing $Q$ a $C_1$-quasiball of $Q$ and denote it by $\widetilde{B_Q}$. We observe that $\diam(\widetilde{B_Q})$ is comparable to $\delta^k$ by uniform perfectness of $X$.
\end{rem}
\begin{proof}
Since $\Omega =X\setminus Y$ is a doubling metric measure space, we have a family of subsets 
\begin{equation*}
\{Q_{\alpha}^k \subset \Omega \;| \; k \in \Z \;,\; \alpha\in I_k \}
\end{equation*}
for fixed constants  $\delta$ and  $C_1$ so that $\mu(\Omega \setminus  \cup_{\alpha \in I_k}Q_{\alpha}^{k} )=0$ from Proposition \ref{mchrist}.
We now consider layers, defined by $\Omega_k = \{x \;|\; c'\delta ^k < \dist(x, \,Y) \leq c' \delta ^{k-1}\}$, where $c'$ is a positive constant we shall fix momentarily.
Obviously, $\Omega = \cup_{k=-\infty}^{\infty} \Omega_k $.

We now make an initial choice of $Q$'s, and denote the resulting collection by $\M_0$. Our choice is made as follows. We consider $Q$'s chosen from $ \mathcal {A}^k=\{Q_{\alpha}^k \;| \;{\alpha} \in I_k \} $ for each $k\in \Z$, (each such $Q$ is of  size approximately $\delta^k$), and include a $Q$ in $\M_0$ if it intersects $\Omega_k$. In other words,
\begin{equation*}
\M_0 = \cup_{k}\{Q \in \mathcal{A}^k \;|\; Q \cap \Omega_{k} \neq \emptyset \}.
\end{equation*}

We then have $\mu(\Omega \setminus \cup_{Q\in \M_0} Q)= 0$.
For an appropriate choice of $c'$,
\begin{equation}
\label{whty1}
\diam(Q)\leq \dist(Q, \,Y) \leq \dfrac{4C_1\,A}{\delta}\,\diam(Q).
\end{equation}
Let us prove \eqref{whty1} first. Suppose $Q \in \mathcal {A}^k$, then $\dfrac{1}{A}\delta ^k \leq \diam(Q)\leq 2C_1\delta ^k$ because of uniform perfectness. Since $Q \in \M_0$, there exists  $x \in Q\cap \Omega_k$. Thus, $\dist(Q, Y) \leq \dist(x, Y)\leq c' \delta ^{k-1} \leq \dfrac{c'A}{\delta}\, \diam Q \leq \dfrac{4C_1\,A}{\delta}\,\diam(Q)$ and  $\dist(Q, \,Y) \geq \dist(x,\, Y) -\diam(Q) \geq c'\delta^k-2C_1\,\delta^k =2C_1\,\delta^k \geq \diam(Q)$. If we choose $c'=4C_1$, we get the equation \eqref{whty1}.

Notice that the collection $\M_0$ has all required properties, except that $Q$'s in it are  not necessarily disjoint. To finish the proof of the lemma we need to refine our choice leading to $\M_0$, eliminating $Q$'s which were really unnecessary. We require the following observation. Suppose $Q \in \mathcal {A}^k$ and $Q' \in \mathcal{A}^{k'}$. If $Q$ and $Q'$ are not disjoint, then one of two must be contained in the other. Start now with any $Q \in  \M_0$, and consider the unique maximal parent in $\M_0$ which contains it. We let $\M_{\Omega}$ denote the collection of maximal $Q$'s in $\M_0$.  The last property comes straightforward from Proposition \ref{mchrist} and Lemma \ref{W-type} is therefore proved.
\end{proof} 

We now define new concepts $Q^*$ and $Q^{**}$ corresponding to a Whitney cube $Q$ and a dilated Whitney cube $\lambda Q$  respectively in the classical Whitney decomposition. 
  \begin{defn}
 \label{Q^*}
 For any fixed $Q \in \M_{\Omega}$, we denote  by $Q^*$ the collection of all  $R \in\M_{\Omega}$ whose distance from $Q$ does not exceed minimum diameters of $R$ and $Q$ by a fixed constant $\epsilon$.  We denote by $Q^{**}$ the collection of all $S\in \M_{\Omega}$ whose distance from some $R \in Q^*$ does not exceed minimum diameters of $R$ and $S$ by a fixed constant $\epsilon$. Here $\epsilon$ is a fixed number such that $0<\epsilon<1$. In other words,
 \begin{enumerate}
 \item $Q^* =\cup \{R \in \M_{\Omega} \;|\; \dist(Q,\, R) <\epsilon \min\{\diam(Q), \, \diam(R)\}\;\}$.
 \item $Q^{**}=\cup \{S \in \M_{\Omega} \;|\;\dist(S,\, R) <\epsilon \min\{\diam(S), \, \diam(R)\}\;\text{for some} \; R \in Q^* \;\} $.
  \end{enumerate}
 \end{defn}
 \begin{rem}
  $Q^*$ could contain no other Christ-cubes except $Q$. Throughout this paper, we can choose any $\epsilon$. However, in practice, we will restrict $\epsilon$ to a universal fixed number  in $(0,\,1)$ since we will consider condition of  uniformly Christ-local bi-Lipschitz embeddings (Definition \ref{measurelocal}).
 \end{rem}
 \begin{rem}
 Figure \ref{fig1}, Figure \ref{fig2} and Figure \ref{fig3} illustrate an idea how our construction goes. Of course, actual shapes will depend on a metric space. 
  \end{rem}
 
 We next see some propositions related to $Q^*$ and $Q^{**}$.
 \begin{prop}
 \label{comparability}
 For any fixed $Q \in \M_{\Omega}$, suppose $R \in Q^*$. Then,
\begin{equation*}
 [\dfrac {  4C_1\,A}{\delta}+1+\epsilon]^{-1} \,\diam(R) \leq \diam(Q)   \leq [\dfrac {  4C_1\,A}{\delta}+1+\epsilon] \,\diam(R)
 \end{equation*}
 \end{prop}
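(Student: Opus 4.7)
The plan is to leverage property (2) of the Christ--Whitney decomposition in Lemma \ref{W-type}, namely
\[
\diam(S) \leq \dist(S,Y) \leq \tfrac{4C_1 A}{\delta}\,\diam(S)\quad\text{for every }S\in \M_{\Omega},
\]
combined with the defining inequality of $Q^{*}$: if $R\in Q^{*}$ then $\dist(Q,R) < \epsilon\min\{\diam(Q),\diam(R)\}$. The heuristic is that two Christ--Whitney cubes whose separation is small compared to either of their diameters must have comparable sizes, because the distances of $Q$ and $R$ to $Y$ (which pin their diameters) cannot differ by much more than $\diam(Q)$.

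First I would derive the one-sided bound $\diam(R) \leq [\tfrac{4C_1 A}{\delta}+1+\epsilon]\diam(Q)$. By property (2), $\diam(R) \leq \dist(R,Y)$, so it suffices to estimate $\dist(R,Y)$ from above. A routine triangle-inequality argument (fix any $r\in R$, choose $q\in Q$ almost-optimally for $\dist(R,Q)$ and $y\in Y$ almost-optimally for $\dist(Q,Y)$, use $d(r,y)\le d(r,q)+d(q,q')+d(q',y)$ for $q'\in Q$, then pass to the infimum and let the approximation parameter tend to $0$) yields the set-to-set bound
\[
\dist(R,Y) \leq \dist(R,Q) + \diam(Q) + \dist(Q,Y).
\]
Applying the $Q^{*}$ bound $\dist(R,Q) \leq \epsilon\,\diam(Q)$ together with the upper Whitney bound $\dist(Q,Y) \leq \tfrac{4C_1 A}{\delta}\,\diam(Q)$ gives the desired inequality.

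For the reverse direction, I would invoke the symmetry of the definition of $Q^{*}$: the condition $\dist(Q,R) < \epsilon\min\{\diam(Q),\diam(R)\}$ is invariant under swapping $Q$ and $R$, so $R\in Q^{*}$ implies $Q\in R^{*}$. Repeating the argument above with the roles of $Q$ and $R$ exchanged, and using $\dist(R,Q) \leq \epsilon\,\diam(R)$ this time, produces $\diam(Q) \leq [\tfrac{4C_1 A}{\delta}+1+\epsilon]\,\diam(R)$, which completes the two-sided comparability.

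There is no essential obstacle here; the only care required is in the set-to-set triangle inequality $\dist(R,Y) \leq \dist(R,Q)+\diam(Q)+\dist(Q,Y)$, which is a standard $\varepsilon$-approximation argument for infima but must be written cleanly to avoid the pitfall of trying to interchange infima at the wrong step.
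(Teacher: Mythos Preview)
Your proof is correct and follows essentially the same route as the paper: bound $\diam(R)\le \dist(R,Y)\le \dist(R,Q)+\diam(Q)+\dist(Q,Y)$ and apply property~(2) of Lemma~\ref{W-type} together with the $Q^*$ bound, then use symmetry. The paper phrases the symmetry step as ``assume without loss of generality $\diam(R)\ge\diam(Q)$,'' whereas you observe explicitly that $R\in Q^*\iff Q\in R^*$; these amount to the same thing.
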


  \begin{figure}[h]
 \centering\subfloat[Christ-Whitney decomposition $\M_{\Omega}$]
  {\includegraphics[width = 6cm]{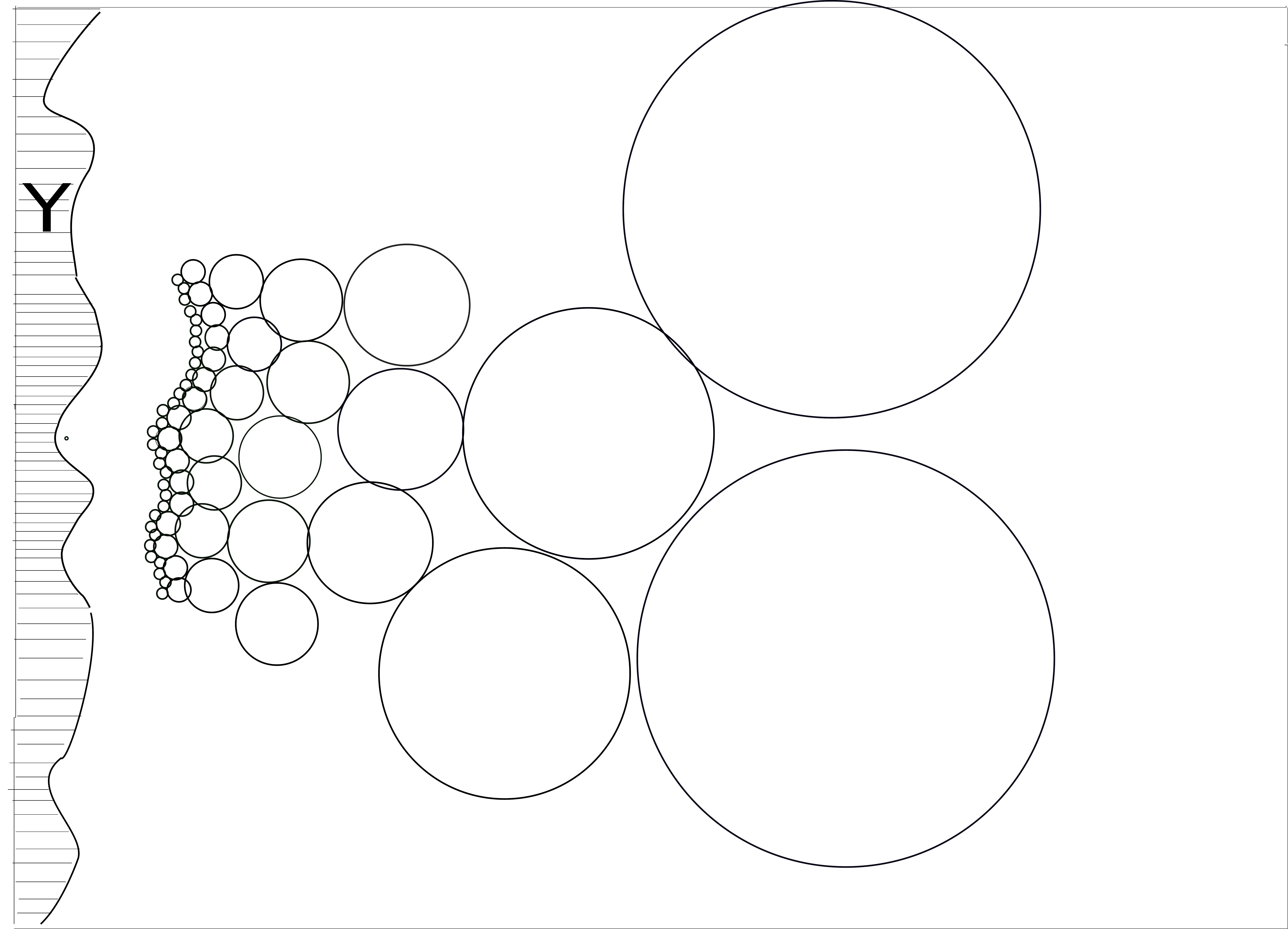}}\hspace*{3cm}
 \subfloat[Definition of $Q^*$ and $Q^{**}$]
  {\includegraphics[width = 6cm]{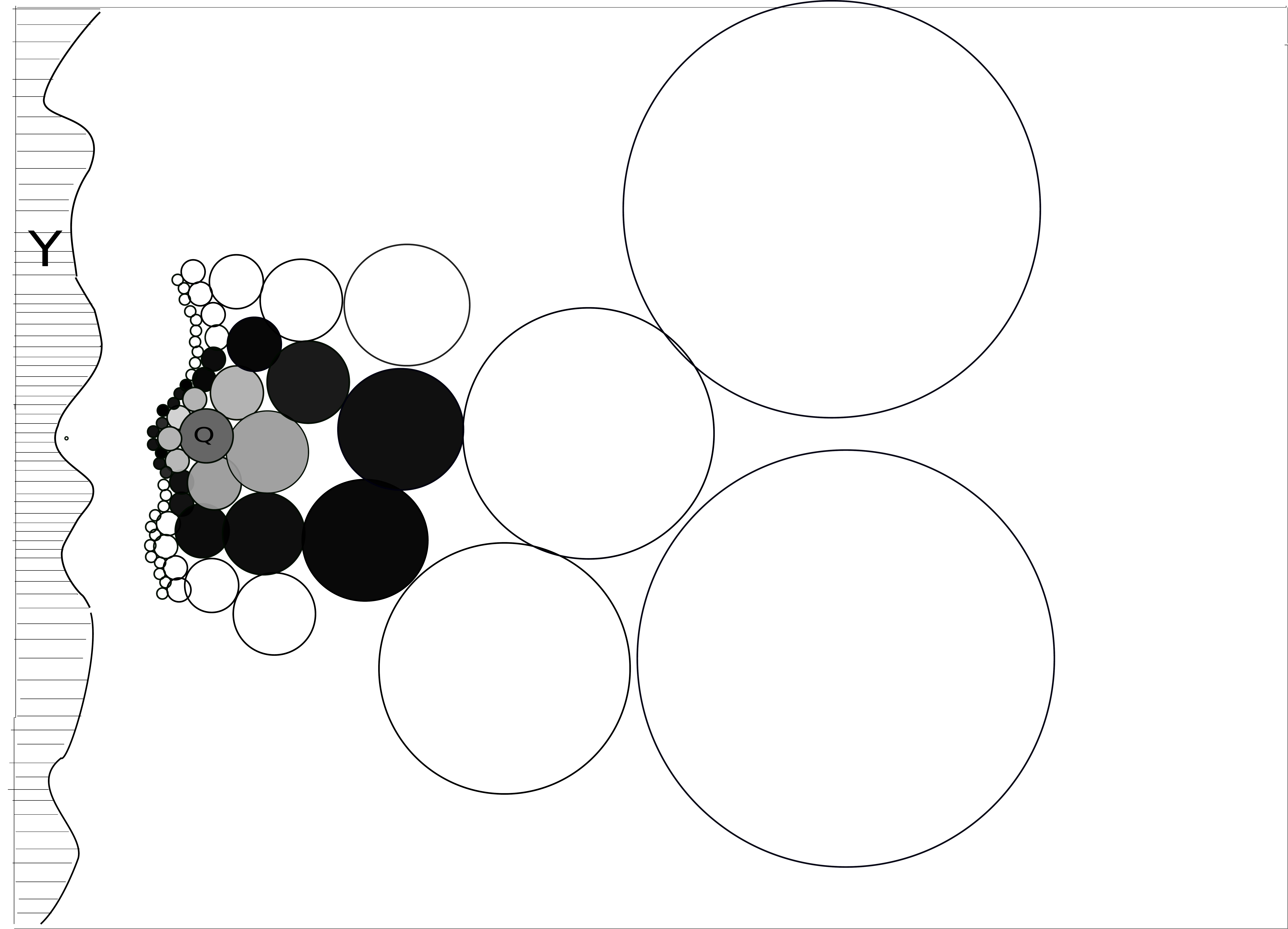}}
 \caption{The gray balls are elements of $Q^*$ and  gray and black balls are elements of $Q^{**}$}
 \label{fig1}
\end{figure}

 \begin{proof}
 We suppose that $\diam(R) \geq \diam(Q)$. Then, we arrive at 
\begin{align*}
\diam(R) &\leq \dist(R,\, Y)\\
              &\leq \diam(Q)+\dist(Q,\, Y) +\dist(R,\, Q) \\
               &\leq [\dfrac{4C_1\,A}{\delta}+1+\epsilon]\,\diam(Q)
\end{align*}
 and the symmetrical implication proves the proposition. 
 \end{proof}
 
 \begin{prop}
  \label{touch}
  Let $(X,\,d)$ be a uniformly perfect metric space supporting a doubling measure $\mu$.
   and let $\M_{\Omega}$ be a Christ-Whitney decomposition as in Lemma \ref{W-type}.
  \begin{enumerate}
 \item Suppose $Q \in \M_{\Omega}$. Then there are at most $N$ Christ cubes in $\M_{\Omega}$ in $Q^{**}$. 
  \item Any point in  $\M_{\Omega}$ is contained in at most $N$ of $Q^{**}$.
 \end{enumerate}
 The number $N$ is independent of $Q$. It depends on the doubling constant of $\mu$, $\epsilon$ and the data of $X$. 
  \end{prop}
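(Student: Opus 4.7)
The plan is to reduce both statements to a doubling packing count by establishing two facts: (i) every Christ cube $S \in Q^{**}$ has diameter comparable to $\diam(Q)$, and (ii) $Q^{**}$ lies in a ball of radius comparable to $\diam(Q)$ centered at any chosen point of $Q$. Together with Lemma~\ref{W-type}(3)-(4) and the doubling condition, these two facts immediately yield both bounds.

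To prove (i), I iterate Proposition~\ref{comparability}. If $S \in Q^{**}$, choose $R \in Q^*$ with $\dist(S,R) < \epsilon\min\{\diam(S),\diam(R)\}$. The same triangle inequality used in the proof of Proposition~\ref{comparability}, applied to the pair $(R,S)$ in place of $(Q,R)$, gives $K^{-1}\diam(R) \leq \diam(S) \leq K\diam(R)$, where $K=\tfrac{4C_1A}{\delta}+1+\epsilon$. Combining with Proposition~\ref{comparability} itself yields $K^{-2}\diam(Q)\leq\diam(S)\leq K^2\diam(Q)$. For (ii), with $S$ and $R$ as above, the triangle inequality and (i) give
\begin{equation*}
\dist(S,Q)\leq\dist(R,Q)+\diam(R)+\dist(S,R)\leq\epsilon\,\diam(Q)+(1+\epsilon)\,\diam(R)\leq C'\,\diam(Q),
\end{equation*}
so picking any $x_Q\in Q$ we have $Q^{**}\subset B(x_Q,C''\diam(Q))$ for a constant $C''$ depending only on $C_1,A,\delta,\epsilon$.

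For part~(1), Lemma~\ref{W-type}(4) embeds in each $S\in Q^{**}$ a ball $B_S$ of radius $a_0\delta^{k_S}$, which by uniform perfectness together with (i) is bounded below by a fixed positive multiple of $\diam(Q)$. The balls $B_S$ are pairwise disjoint by Lemma~\ref{W-type}(3) and all lie in $B(x_Q,C''\diam(Q))$, so the standard doubling packing estimate (iterate \eqref{doubling} the required number of times so that the ball radius drops from $C''\diam(Q)$ to a fixed multiple of $\diam(Q)$) bounds the number of $S$'s appearing in $Q^{**}$ by a constant $N$ depending only on $D,A,\delta,C_1,a_0,\epsilon$. For part~(2), fix $p$ and let $Q_1,\dots,Q_M$ be the distinct cubes with $p\in Q_i^{**}$. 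Since the cubes of $\M_{\Omega}$ are pairwise disjoint and cover $\Omega$ up to a $\mu$-null set, I may assume $p$ lies in a unique cube $S_0\in\M_{\Omega}$; then $S_0\in Q_i^{**}$ for every $i$. Applying (i) and (ii) with the roles of $Q$ and $S$ interchanged (the definition of $Q^{**}$ is symmetric enough to allow this), each $Q_i$ has diameter comparable to $\diam(S_0)$ and lies in $B(p,C'''\diam(S_0))$. The disjointness of the $Q_i$ and an identical packing count bound $M$ by a constant of the same form.

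The only real obstacle is the bookkeeping required to track comparability constants through the two-step definition of $Q^{**}$; once (i) and (ii) are in hand, the packing arguments for both parts are routine consequences of the doubling condition and the quasiball property from Lemma~\ref{W-type}(4).
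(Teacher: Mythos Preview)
Your proof is correct and follows essentially the same strategy as the paper: diameter comparability via Proposition~\ref{comparability} (iterated once for $Q^{**}$), containment of $Q^{**}$ in a ball of size $\diam(Q)$, and a doubling packing count. For part~(2) the paper is slightly slicker: after observing the symmetry $S_0\in Q_i^{**}\iff Q_i\in S_0^{**}$ (which you also note), it simply invokes part~(1) applied to $S_0$ rather than repeating the packing argument.
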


 \begin{proof}
For any $R \in Q^{**}$,  we have comparability between $\diam(Q)$ and  $\diam(R)$ from Proposition \ref{comparability}. Therefore, $\diam(Q^{**})$ is comparable to $\diam(Q)$. Doubling condition yields that there are at most a finite number of such $R$'s and hence there are at most $N(\mu, \,C_1,\,A,\, \delta,\,\epsilon)$ Christ cubes in $Q^{**}$.

Let $p$ be a point in $\M_{\Omega}$ and write $p \in R$. We now observe that  for any $Q \in R^{**}$, we have $R\in  Q^{**}$. We have $p \in Q^{**}$ for all $Q  \in R^{**}$ and hence $p$ is contained in at most $N$ sets of type $Q^{**}$ by Proposition \ref{touch} $(1)$.
 \end{proof}

We now build a family of Lipschitz cutoff functions.  We will use these functions to construct a W-local co-Lipschitz map by composing with uniformly Christ-local bi-Lipschitz embeddings. See Lemma \ref{W-lem2}.
\begin{lemma}
\label{partition}
There exist functions ${ \varphi_{Q}}:X \rightarrow \R$ where $Q \in \M_{\Omega}$ with the following properties:
\begin{enumerate}
\item $0 \leq  {{\varphi}_{Q}} \leq 1 $,
\item ${{\varphi}_{Q}}|_{Q^{*}}= 1$,
\item  ${{\varphi}_{Q}}|_{X\setminus Q^{**}}= 0$,
\item ${{\varphi}_{Q}}$ is Lipschitz with constant $ \dfrac{C}{ \diam(Q)}$,
\item For all $p \in \Omega$, we have ${\varphi}_{Q}(p) \neq 0 $ for at most $N$ cubes $Q \in \M_{\Omega}$.
\end{enumerate}
Here, $C$ and $N$ denote uniformly fixed constants independent of the choice of element  $Q \in \M_{\Omega}$. They depend on the data of $X$, $\epsilon$, and the doubling constant of $\mu$. 
\end{lemma}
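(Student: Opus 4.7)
The plan is to construct each $\varphi_Q$ as a rescaled, truncated distance function to $Q^*$. Properties (1), (2), and (4) will fall out by construction, property (3) will require a short argument using the comparability of diameters from Proposition \ref{comparability}, and property (5) will follow directly from Proposition \ref{touch}(2).

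Concretely, I would define
\[
\varphi_Q(p) := \max\!\left(0,\ 1 - \frac{\dist(p,\, Q^*)}{c_0\,\diam(Q)}\right),
\]
where $c_0 \in (0,1)$ is a small universal constant depending on $\epsilon$, on the data of $X$, on the doubling constant of $\mu$, and on the parameters $A$, $C_1$, $\delta$ of the Christ-Whitney decomposition. Property (1) is obvious, property (2) follows because $\dist(\cdot,\,Q^*)$ vanishes on $Q^*$, and property (4) is immediate from the $1$-Lipschitz continuity of $\dist(\cdot,\,Q^*)$: the resulting Lipschitz constant is $1/(c_0\,\diam Q)$, which is of the required form.

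The main obstacle is property (3), namely $\varphi_Q \equiv 0$ off $Q^{**}$. Suppose to the contrary that $\varphi_Q(p)>0$, so that $\dist(p,\,Q^*) < c_0\,\diam Q$ and in particular $\dist(p,R) < c_0\,\diam Q$ for some $R \in Q^*$. First, $p$ cannot lie in $Y$: Lemma \ref{W-type}(2) and Proposition \ref{comparability} yield $\dist(p,R) \geq \dist(Y,R) \geq \diam R \gtrsim \diam Q$, contradicting $c_0 \ll 1$. Thus $p$ belongs to the closure of some $S \in \M_{\Omega}$ (points of $\Omega \setminus \bigcup_Q Q$ form a $\mu$-null closed-complement set that is handled by continuity). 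Applying Lemma \ref{W-type}(2) together with the triangle inequality $\diam S \leq \dist(S,Y) \leq \dist(S,R) + \diam R + \dist(R,Y)$ and Proposition \ref{comparability}, one sees that $\diam S$ is comparable to $\diam Q$. Once all three diameters $\diam S$, $\diam R$, $\diam Q$ are comparable with a universal constant, choosing $c_0$ small enough in terms of $\epsilon$ and that constant guarantees $\dist(S,R) \leq c_0\,\diam Q < \epsilon\min(\diam S, \diam R)$, placing $S$ in $Q^{**}$ and hence $p \in Q^{**}$.

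Property (5) is then immediate: if $\varphi_Q(p) \neq 0$ then $p \in Q^{**}$ by property (3), and Proposition \ref{touch}(2) asserts that any fixed $p$ belongs to at most $N$ of the sets $Q^{**}$. The constants $C$ and $N$ in the statement of the lemma collect the various quantitative bounds produced along the way, and all depend only on $\epsilon$, the doubling constant, and the data of $X$, as required.
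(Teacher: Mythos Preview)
Your construction is the natural dual of the paper's. The paper sets
\[
\varphi_Q(x)=\min\Bigl\{1,\ \frac{\dist(x,\,X\setminus Q^{**})}{\dist(Q^*,\,X\setminus Q^{**})}\Bigr\},
\]
so that (3) is automatic and the work goes into (4), namely proving the separation estimate $\dist(Q^*,\,X\setminus Q^{**})\gtrsim\diam Q$. You instead use $\dist(\cdot,\,Q^*)$, which makes (4) automatic and shifts the work to (3); but (3) in your formulation is precisely the same separation estimate read contrapositively. So the substance of the two proofs is identical, and both hinge on the same inequality.

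Two points where your write-up is thinner than it should be. First, the chain $\diam S\le\dist(S,Y)\le\dist(S,R)+\diam R+\dist(R,Y)$ that you display only gives the \emph{upper} bound $\diam S\lesssim\diam Q$; you still need the lower bound $\diam S\gtrsim\diam Q$ to force $c_0\,\diam Q<\epsilon\min\{\diam S,\diam R\}$. This follows, but by a separate observation: from $\dist(p,R)<c_0\,\diam Q$ and the Whitney estimate $\dist(R,Y)\ge\diam R\gtrsim\diam Q$ one gets $\dist(p,Y)\gtrsim\diam Q$, while $p\in\overline{S}$ and Lemma~\ref{W-type}(2) give $\dist(p,Y)\le(1+4C_1A/\delta)\,\diam S$. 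The paper handles this same issue by a case split on the relative size of $\diam S$ and $\diam R$ rather than by proving two-sided comparability; your contrapositive framing makes the comparability route available, but you should actually carry it out. Second, your treatment of points $p\in\Omega$ lying in the $\mu$-null complement of $\bigcup_{S}S$ is hand-wavy: your argument genuinely needs $p$ to sit in some cube $S$, and density/continuity only yields $p\in\overline{Q^{**}}$, not $p\in Q^{**}$. The paper's choice of cutoff sidesteps this entirely, since $\dist(\cdot,\,X\setminus Q^{**})$ vanishes identically on $X\setminus Q^{**}$. For the downstream application (the map $H$ is only evaluated on $\M_\Omega\cup Y$) your version is adequate, but as stated your $\varphi_Q$ may fail (3) on that null set.
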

 \begin{proof}
 We define 
 \begin{equation*}
{\varphi_Q}(x) =\min \{1, \;\; \dfrac{\dist(x, X\setminus Q^{**})}{\dist(Q^*,\, X\setminus Q^{**})}\; \}.
\end{equation*} 
 Then, $(1),\,(2)$ and $(3)$ are obvious and $(5)$ follows from Proposition \ref{touch}. 
 To check $(4)$, note that
 \begin{equation*}
|\varphi_Q(p)-\varphi_Q(q)| \leq \dfrac{d(p,\,q)}{\dist(Q^*,\, X\setminus Q^{**})}.
\end{equation*}
Thus, it suffices to show that 
\begin{equation*}
\dist(Q^*,\, X\setminus Q^{**}) \geq c\,\diam(Q)
\end{equation*}
 To this end, let $x$ be a point in $Q^*$. We write $x \in R$ for some $R \in Q^*$ and choose $y \in S \in  X\setminus Q^{**}$. Then,
\begin{align*}
d(x,\, y) &\geq \dist(R,\, S) \\
&\geq \epsilon \min\{ \diam(R),\, \diam(S) \}\\
 &\geq C(L_1,\, A,\, \delta, \, \epsilon)\,\diam(Q).
\end{align*}
The last inequality is deduced from the comparability between $\diam(R)$ and  $\diam(Q)$ in case $\diam(S)\geq \diam(R)$. Otherwise,  $\diam(R) \geq \diam(S)$, we divide into two cases, either 
\begin{equation*}
(1)\;\diam(R) \geq \diam(S) \geq \dfrac{1}{2[\dfrac{4C_1\, A}{\delta}+1]}\, \diam(R) \;\;\;\;\;\text{or}\;\; \;\;\;(2) \;\diam(S) <\dfrac{1}{2[\dfrac{4C_1\, A}{\delta}+1]}\, \diam(R).
\end{equation*}
In the first case, we have obviously comparability between $\diam(S)$ and $\diam(R)$.  In the second case, we use the comparability condition of a Christ-Whitney decomposition. Then,
\begin{align*}
\dist(R,\, S) &\geq \dist(R, \,Y)- \dist(S, Y)-\diam(S)\\
&\geq \diam(R)-[\frac{4C_1\, A}{\delta}+1]\,\diam(S)\\
&\geq  \dfrac{1}{2}\, \diam(R)\\
&\geq C(L_1,\, A,\,\delta)\,\diam(Q).
\end{align*}
Therefore, the proof of $(4)$ is completed.
\end{proof}
\begin{rem}
We use the fact that $\varphi_Q =1$ on $Q^*$ and $\varphi_Q =0$ off $Q^{**}$ so that  the map $\widetilde{h}_Q =h_Q \cdot \varphi_Q$ defined in Subsection $3.3$ is bi-Lipschitz on $Q^{*}$ and supported on $Q^{**}$. These properties are needed in the proof of Lemma \ref{W-lem2}, see case $(3)$.
 \end{rem}
 
 \begin{defn}
 \label{measurelocal}
 Let $(X, d, \mu)$ be a uniformly perfect metric space supporting a  doubling measure and let $Y$ be a closed subset of $X$. We say that $\Omega =X\setminus Y$ admits  uniformly Christ-local bi-Lipschitz embeddings if there exist bi-Lipschitz embeddings of each $Q^{**}$ into a fixed Euclidean space with uniform bi-Lipschitz constant. 
\end{defn}

\subsection{Whitney Distance Map}
The following relative distance map plays a key role to construct a  co-Lipschitz map from a metric space into Euclidean space in Section 3.  We will break $\M_{\Omega}$  into two parts and construct co-Lipschitz maps on these parts (Definition \ref{W}) by using the Whitney distance map.
\begin{defn}
\label{W-distance}
The Whitney distance map $d_\W$ on $\M_{\Omega} \times \M_{\Omega}$ is
defined by
\begin{equation*}
d_{\W}(Q, R) = \dfrac{\dist(Q, \,R)}{\min(\diam(Q), \diam(R))}.
\end{equation*}
\end{defn}
\begin{rem}
The Whitney distance map $d_\W$ is not a metric. In fact, if $\overline{Q} \cap\overline{R} \neq \emptyset$, then $d_\W(Q,\, R) =0$. We observe that 
\begin{equation*}
d_\W(Q, \, R)\leq d_\W(Q,\, S) + d_\W(S,\, R)+1 
\end{equation*}
 if $\diam(S) \leq \min\{\diam(Q),\, \diam(R)\}$. 
 
 Throughout this paper, we will use the terminology Whitney distance ball of radius $\rho$ for the set of all elements in $M_{\Omega}$ such that Whitney distance to a fixed center cube in $\M_{\Omega}$ is less than $\rho$. We write $B_{\W}(Q,\, \rho)$ for the Whitney distance ball of radius $\rho$ with center $Q$.
\end{rem}

The next lemma allows us to construct a coloring map that gives different colors to Christ cubes within a given Whitney distance ball. 
\begin{lemma}
\label{number}
Each Whitney distance ball of radius $\rho$  contains a finite number of elements of the Christ-Whitney decomposition $\M_{\Omega}$. The number depends on the doubling constant of $\mu$ and  $\rho$.
\end{lemma}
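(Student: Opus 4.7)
The plan is a volume-counting argument exploiting the doubling measure $\mu$. Fix $Q \in \M_{\Omega}$ and write $B_{\W}(Q, \rho) = \{R \in \M_{\Omega} : d_{\W}(Q, R) < \rho\}$. I will proceed in three steps: (a) show that $\diam(R)$ is comparable to $\diam(Q)$ for every $R \in B_{\W}(Q, \rho)$, with comparability constant depending only on $\rho$ and the data; (b) show that every such $R$ lies inside a single ball centered at the Christ-center of $Q$ of radius comparable to $\diam(Q)$; (c) apply doubling together with the inner quasiball property from Lemma \ref{W-type}(4) to bound the number of pairwise disjoint cubes that fit inside this ball.

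For step (a), suppose without loss of generality that $\diam(R) \le \diam(Q)$, so that $\dist(Q, R) < \rho \, \diam(R)$. Picking two (possibly distinct) points of $R$ to run the triangle inequality from $Q$ through $R$ to $Y$ and combining with Lemma \ref{W-type}(2) applied to both $Q$ and $R$ gives
\begin{equation*}
\diam(Q) \le \dist(Q, Y) \le \dist(Q, R) + \diam(R) + \dist(R, Y) \le \Bigl(\rho + 1 + \tfrac{4 C_1 A}{\delta}\Bigr) \diam(R),
\end{equation*}
so $\diam(R) \ge K(\rho)^{-1}\diam(Q)$ with $K(\rho) = \rho + 1 + \tfrac{4 C_1 A}{\delta}$; the case $\diam(Q) \le \diam(R)$ is symmetric.

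For step (b), let $z_Q$ be as in Lemma \ref{W-type}(4), so $Q \subset B(z_Q, C_1 \delta^k)$, where $\delta^k$ is the Christ scale of $Q$. For any $y \in R$, the triangle inequality together with step (a) yields
\begin{equation*}
d(y, z_Q) \le \diam(Q) + \dist(Q, R) + \diam(R) \le C(\rho)\, \diam(Q) \le C'(\rho)\, \delta^k,
\end{equation*}
so $R \subset B(z_Q, C'(\rho)\, \delta^k)$.

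For step (c), Lemma \ref{W-type}(4) provides an inner quasiball $B(z_R, a_0 \delta^{k_R}) \subset R$, and step (a) combined with the uniform-perfectness comparison $\diam(R) \asymp \delta^{k_R}$ gives $a_0 \delta^{k_R} \ge c(\rho)\, \delta^k$. Since $z_R \in B(z_Q, C'(\rho)\, \delta^k)$, one has $B(z_Q, C'(\rho)\, \delta^k) \subset B(z_R, 2 C'(\rho)\, \delta^k)$, and iterating \eqref{doubling} a bounded number $m \asymp \log_2(C'(\rho)/c(\rho))$ of times yields $\mu(B(z_R, a_0 \delta^{k_R})) \ge D^{-m}\, \mu(B(z_Q, C'(\rho)\, \delta^k))$. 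Since the cubes in $B_{\W}(Q, \rho)$ are pairwise disjoint by Lemma \ref{W-type}(3), summing this inequality over $R$ and comparing with $\mu(B(z_Q, C'(\rho)\, \delta^k))$ caps the cardinality by $D^m$, which depends only on $\rho$ and the data of $X$. The only step requiring care is the constant bookkeeping in (a), but no substantive obstacle arises beyond it.
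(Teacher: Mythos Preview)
Your proof is correct and follows essentially the same route as the paper: both first establish that $\diam(R)$ is comparable to $\diam(Q)$ via the Christ--Whitney inequality $\diam(\cdot)\le\dist(\cdot,Y)\le\tfrac{4C_1A}{\delta}\diam(\cdot)$, then confine all admissible $R$ to a ball of radius $\lesssim\diam(Q)$ about the center of $Q$, and finally invoke doubling. The only cosmetic difference is in step~(c): the paper phrases the count as a covering argument on the centers of the $C_1$-quasiballs, whereas you sum the measures of the disjoint inner quasiballs $B(z_R,a_0\delta^{k_R})$ and compare against the measure of the containing ball; these are equivalent uses of the doubling hypothesis.
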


\begin{proof}
We fix a Christ cube $Q \in\M_{\Omega}$ and we require to count the number of $R \in  \M_{\Omega}$ such that $d_\W(Q, R) <\rho$.
We have two cases either $(1)$ $ \diam (Q) < \diam(R)$ or  $(2)$ $ \diam (R) \leq \diam(Q)$.

Suppose $\diam(Q) <\diam (R)$. Then, we have
\begin{equation*}
\dist(R, \,Y) -\dist(Q,\, Y) <\dist(Q,\, R)+\diam(Q) <(\rho+1)\,\diam (Q).
\end{equation*}
 Since  $\dist(Q, \,Y) \leq   \dfrac{4C_1\,A}{\delta}\, \diam(Q)$,  we have an upper bound for  $\diam(R)$ in terms of $\diam(Q)$. That is, $\diam(R)< (\rho+1+\dfrac{4C_1\,A}{\delta})\, \diam(Q)$. 
 
 Similarly,   $\diam( R)$ has a lower bound in terms of the size of $Q$ in the case of $\diam(R) \leq \diam(Q)$:
\begin{equation*}
\diam(R) \geq   (\rho+1 +\dfrac{4C_1\,A}{\delta})^{-1} \,\diam(Q).
\end{equation*}
Therefore, the number of $R \in \M_{\Omega}$ in $B_{\W}(Q,\,\rho)$ is the sum of the  cardinality of the following sets:
\begin{equation}
\label{one}
\{R \in \M_{\Omega} \;|\; \diam(Q)< \diam(R) <(\rho+1+\frac{4C_1\,A}{\delta}) \,\diam(Q) \; \text{and}\; \dist(Q, R)<\rho\,\diam(Q) \}
\end{equation}
and
\begin{equation}
\label{two}
\{R\in \M_{\Omega} \;|\; (\rho+1 +\frac{4C_1\,A}{\delta})^{-1}\,\diam(Q) < \diam(R) \leq \diam (Q)\; \text{and}\; \dist(Q, R) <\rho\,\diam(R) \}
\end{equation}
Now we suppose that $p$ and $q$  are  centers of $C_1$- quasiballs $\widetilde{B_Q}$ and $\widetilde{B_R}$ which have approximately sizes of $Q$ and $R$. If $R$ is in either the set  \eqref{one} or the set \eqref{two}, then we find that
\begin{equation}
\label{dist}
d(p, \,q) \leq \diam(Q)+\dist(Q,\, R)+\diam(R) <(2\rho+1+\frac{4C_1\,A}{\delta})\,\diam(Q).
\end{equation}
Thus, the number of $R \in \M_{\Omega}$ in $B_{\W}(Q,\, \rho)$  is at most twice of the number of centers $q$  satisfying \eqref{dist}. In other words, we can count the number of $R$'s  in \eqref{one} and \eqref{two} by counting the number of centers of $C_1$-quasiballs $\widetilde{B_R}$. By the doubling condition, the ball centered at $p$ with radius  $(2\rho+1+\dfrac{4C_1\,A}{\delta})\,\diam(Q)$ can be covered by finite number of $C_1$-quasiballs centered at such $q$. Finally, the comparability of the  size of $R$ and that of the ball centered at $q$ concludes  Lemma \ref{number}.
\end{proof}

We write the number of Christ cubes within Whitney distance ball  of radius $\rho$ as $m =m(\rho, \, D)$ in terms of  $\rho$ and  the doubling constant $D$ of $\mu$.
\begin{lemma}
\label{color}
There exists a coloring map
\begin{equation*}
K : \M_{\Omega} \longrightarrow \{1,2,3,\ldots, M \}\; \text{for some}\; M \geq m(m-1)
\end{equation*}
such that any two boxes within Whitney distance ball of radius $\rho$ have different colors. In other words,
if  $R',\; R''$ have  $d_{\W}(R', R'') <\rho$, then $K(R') \neq K(R'')$.
\end{lemma}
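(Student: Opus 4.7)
The plan is to obtain $K$ by a standard greedy proper-coloring of a suitably defined conflict graph, together with a transfinite/compactness step to accommodate the fact that $\M_{\Omega}$ may be countably infinite.

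First I would introduce the undirected simple graph $G$ whose vertex set is $\M_{\Omega}$ and whose edge set is
\[
E(G)=\{\{Q,R\}:Q,R\in\M_{\Omega},\ Q\neq R,\ d_{\W}(Q,R)<\rho\}.
\]
This is well-defined as an undirected edge set because the Whitney distance map $d_{\W}$ is manifestly symmetric in its two arguments. The assertion of the lemma is precisely that $G$ admits a proper vertex coloring with values in $\{1,\ldots,M\}$ for any $M\geq m(m-1)$.

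Next I would invoke Lemma \ref{number} to bound the maximum degree of $G$. Fix any $Q\in\M_{\Omega}$; the set of neighbors of $Q$ in $G$ is contained in $B_{\W}(Q,\rho)\setminus\{Q\}$, which by Lemma \ref{number} has cardinality at most $m-1$, where $m=m(\rho,D)$ is the uniform bound coming from that lemma. Hence every vertex of $G$ has degree at most $m-1$.

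Finally I would produce the coloring by a transfinite greedy argument. Choose any well-ordering of the (at most countable) set $\M_{\Omega}$ and, iterating along this ordering, assign to each cube $Q$ the smallest positive integer that is not already used on a neighbor of $Q$ colored at an earlier stage. Since at every step at most $m-1$ colors are forbidden, the procedure succeeds within the palette $\{1,\ldots,m\}$, and composing with any injection $\{1,\ldots,m\}\hookrightarrow\{1,\ldots,M\}$ (which exists for any $M\geq m(m-1)\geq m$ whenever $m\geq 2$, and trivially when $m\leq 1$) yields the required map $K$. I do not anticipate any real obstacle: the argument is entirely combinatorial and its only nontrivial input is the degree bound supplied by Lemma \ref{number}; the bound $M\geq m(m-1)$ in the statement is merely a convenient over-count that leaves slack for the later uses of $K$ in Section~3.
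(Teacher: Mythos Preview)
Your proof is correct and follows the same greedy-extension strategy as the paper, which packages the induction via Zorn's lemma on the poset of partial colorings rather than your explicit well-ordering; these are equivalent devices. The one substantive difference is that the paper, when extending a maximal partial coloring to a new cube $Q'$, forbids not only the colors of cubes $R$ with $d_{\W}(Q',R)<\rho$ but also the colors of every $S$ with $d_{\W}(S,R)<\rho$ for some such $R$; this two-step avoidance is what produces the count $m(m-1)$. For the lemma \emph{as stated} the one-step avoidance you use already suffices and gives the sharper bound $m$, so your argument is in fact a little more efficient. The paper's extra caution appears tied to how the coloring is later deployed in Lemma~\ref{W-lem2} (where one wants cubes whose supports $Q^{**}$ overlap to carry distinct colors), but that stronger requirement is not part of the present statement.
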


\begin{proof}
We apply Zorn's lemma. Let us consider the partially ordered set $(\mathcal{P}, \leqslant)$ where
$\mathcal{P}$ is the collection of maps $k$ defined from $\mathcal{S} \subset \M_{\Omega}$ to  $ \{1,2, \ldots ,M\}$ so that $K(R) \neq K(R')$ for all  $R,\, R' \in  \mathcal{S}$ whose Whitney distance is less than $\rho$. The inequality $(k, \mathcal{S}) \leqslant  (k^{\prime},\mathcal{ S^{\prime}})$ means $ k^{\prime}$ is a extension of $k$ $(\mathcal{S}\subset \mathcal{S^{\prime}} \in \mathcal{P} \; \text{and} \; k^{\prime} |_{\mathcal{S}} = k )$. 

By Zorn's lemma, there exists a maximal element $ \widehat{k} $. If the domain of $ \widehat{k}$ is $\M_{\Omega}$, then we can set $K=\widehat{k}$. Otherwise, take $Q^{\prime} \in \M_{\Omega} \setminus \text{domain}( \widehat{k} )$. We now want to give a color to $Q'$. The color of $Q'$ should differ from any color already assigned to any $R$ where $d_{\W}(Q',\, R)<\rho$ and also differ from any color already assigned to any $S$ where $d_{\W}(S,\, R) <\rho$ and $d_{\W}(Q',\, R)<\rho$. We observe that the number of such $R$ is at most $m-1$ and the number of $S$ for given $R$ is at most $m$. Thus, the total number of colors seen is at most $m(m-1)$. Since $M\geq m(m-1)$, there is a color remaining which can be assigned to $Q'$; this contradicts maximality of $\widehat{k}$.
\end{proof}
\section{Bi-Lipschitz embeddable metric spaces}
Now we are ready to state the main theorem. It asserts that in a uniformly perfect complete metric space supporting a doubling measure, the local information of uniformly Christ-local bi-Lipschitz embeddability (Definition \ref{measurelocal}) can be turned into global information of bi-Lipschitz embeddability. 
\begin{thm}
\label{main}
A uniformly perfect complete metric space $(X, d)$ admits a bi-Lipschitz embedding into some Euclidean space if and only if the following conditions hold:
\begin{enumerate}
\item it supports a doubling  measure $\mu$,
\item there exists a closed subset $Y$ of $X$ which admits a bi-Lipschitz embedding into some $\R^{M_1}$, 
\item $\Omega = X\setminus Y$ admits uniformly Christ-local bi-Lipschitz embeddings into some $\R^{M_2}$. 
\end{enumerate}
The bi-Lipschitz constant and dimension of receiving Euclidean space depend on the data of the metric space $X$, the doubling constant of $\mu$,  $M_1$, $ M_2$, and the bi-Lipschitz constants in conditions $(2)$ and $(3)$.
\end{thm}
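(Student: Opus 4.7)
The necessity direction is immediate: a bi-Lipschitz embedding $\Phi:X\to\R^N$ transfers doubling, realizes (2) with $Y=X$, and fulfills (3) because each $Q^{**}$ inherits a bi-Lipschitz embedding from $\Phi$. For sufficiency, my plan is to assemble the target map as a product
\[
\Phi \;=\; \bigl(F,\,H_1,\ldots,H_M,\,\Psi\bigr)\colon X \longrightarrow \R^{M_1}\times(\R^{M_2})^{M}\times\R^{N_3},
\]
where $F$ will be a global Lipschitz extension of the bi-Lipschitz embedding of $Y$, the $H_c$ will be ``W-local'' pieces built by summing Christ-local embeddings along color classes, and $\Psi$ will be a ``W-large scale'' piece separating points lying in distant Whitney cubes.

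For the first component I would apply McShane's Corollary \ref{McShane2} to the bi-Lipschitz embedding $f\colon Y\to\R^{M_1}$ of hypothesis (2), obtaining a Lipschitz extension $F\colon X\to\R^{M_1}$ with $F|_Y=f$. For the pieces $H_c$ I would exploit the Christ-Whitney decomposition $\M_\Omega$ of Lemma \ref{W-type}: for each $Q\in\M_\Omega$, hypothesis (3) gives a uniformly bi-Lipschitz embedding $h_Q\colon Q^{**}\to\R^{M_2}$, which I would normalize to vanish at a chosen basepoint of $Q$ and multiply by the cutoff $\varphi_Q$ of Lemma \ref{partition} to obtain $\tilde h_Q := \varphi_Q\cdot h_Q$. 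Fixing a coloring radius $\rho>0$ (to be calibrated in terms of $\epsilon$) and the coloring $K\colon\M_\Omega\to\{1,\dots,M\}$ from Lemma \ref{color}, I set
\[
H_c(x) \;=\; \sum_{K(Q)=c}\tilde h_Q(x).
\]
Proposition \ref{touch} together with the color separation will ensure that each $x$ is hit by a uniformly bounded number of nonzero summands, making each $H_c$ globally Lipschitz with constants coming from Lemma \ref{partition} and hypothesis (3).

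The central verification will be co-Lipschitzness of $\Phi$, which I plan to split using the Whitney distance $d_\W$ of Definition \ref{W-distance}. In the \emph{W-local} regime $d_\W(Q_p,Q_q)<\rho$ with $p\in Q_p$ and $q\in Q_q$, a suitable choice of $\rho$ will force both points into a common $Q^{**}$ on which $\varphi_Q\equiv 1$ near the basepoints, so the bi-Lipschitz control on $h_Q$ transfers directly to $H_{K(Q)}$. In the \emph{W-large scale} regime $d_\W(Q_p,Q_q)\geq\rho$, property (2) of Lemma \ref{W-type} forces $d(p,q)$ to be comparable (up to $\rho$-dependent factors) to $\dist(p,Y)+\dist(q,Y)$; thus either the $1$-Lipschitz function $x\mapsto\dist(x,Y)$ already separates the two images, or else $F$ does via its bi-Lipschitz behavior near an approximately nearest $Y$-point. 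I would therefore take $\Psi$ to be an Assouad-snowflake embedding (Theorem \ref{Assouad}) of $x\mapsto\dist(x,Y)$, with the snowflake exponent tuned to absorb the Whitney-scale distortion. The hardest step will be the joint calibration of $\epsilon$, $\rho$ and the snowflake exponent so that the W-local and W-large scale bounds overlap consistently; since $M$ from Lemma \ref{color} grows with $\rho$, this calibration also controls the final ambient dimension.
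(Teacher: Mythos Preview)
Your architecture is exactly the paper's: McShane extension $F$, colored sums $H_c=\sum_{K(Q)=c}\varphi_Q h_Q$, and a distance-to-$Y$ component, with the co-Lipschitz verification split by the Whitney distance $d_\W$. Two of your choices, however, would cause the proof to fail as written.

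First, the snowflaked $\Psi$ is a mistake. If $\Psi$ is a bi-Lipschitz embedding of $(\R,|\cdot|^\epsilon)$ composed with $x\mapsto\dist(x,Y)$, then $|\Psi(p)-\Psi(q)|\asymp|\dist(p,Y)-\dist(q,Y)|^{\epsilon}$, and for $d(p,q)$ small this dominates $d(p,q)$ rather than being controlled by it; $\Psi$ is \emph{not} Lipschitz on $(X,d)$ for any $\epsilon<1$. There is nothing to ``absorb'' here: the paper simply takes $\Psi(x)=\dist(x,Y)\in\R$, which is $1$-Lipschitz, and shows (Lemma~\ref{W-lem1}) that in the W-large scale regime either $F$ or this raw distance already gives a lower bound $\gtrsim d(p,q)$. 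Your preceding sentence (``either $\dist(\cdot,Y)$ separates, or else $F$ does'') is the correct dichotomy; just drop the snowflake. Note also that your auxiliary claim that $d(p,q)$ is comparable to $\dist(p,Y)+\dist(q,Y)$ in the W-large scale regime is false in the case where $\dist(Q,R)\gg\max\{\diam Q,\diam R\}$; that is precisely the sub-case where $F$, not $\dist(\cdot,Y)$, must do the work.

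Second, the W-local analysis needs the radius $\rho$ to be \emph{large} (the paper takes $\rho=16M_1L_1^2$) for the W-large scale argument to go through, so you cannot arrange that $d_\W(Q_p,Q_q)<\rho$ forces $q\in Q_p^{**}$. The paper handles $p\in Q$, $q\notin Q^{**}$ separately by using $|H(p)-H(q)|\ge|\tilde h_Q(p)|\gtrsim\diam Q$; for this one must normalize $h_Q$ so that $\tilde h_Q(Q^*)$ lies in an annulus bounded \emph{away} from $0$, not so that it vanishes at a basepoint. With your normalization $\tilde h_Q(p)$ can be arbitrarily small near the basepoint and that case collapses.
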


\noindent{\bf Outline of Proof}\\
Suppose that we have a $L$-bi-Lipschitz embedding $f$ from $(X, d)$ into $\R^n$ for some $n$.  Euclidean space is a doubling metric space and the doubling condition is bi-Lipschitz invariant. Hence, $(X, \,d)$ is a complete doubling metric space. Thus, there exists a doubling measure $\mu$ (\cite{sak}, \cite{Wu}).  The second condition is trivial, setting $Y= X$. The third condition is trivial since $\Omega=\emptyset$.

The content of the theorem is the other implication: a uniformly perfect complete space satisfying $(1)$, $(2)$, and $(3)$ embeds bi-Lipschitzly in some $\R^n$ for some $n$.
We will use Proposition \ref{mainlem} to complete the main theorem. Since the full measure set $\M_{\Omega} \cup Y$ is dense in $X$  and  the constructed map in Proposition \ref{mainlem} is uniformly continuous, Theorem \ref{main} follows immediately. Therefore, we will focus on proving Proposition \ref{mainlem} in Subsection \ref{W-large}, Subsection \ref{W-local}, and Subsection \ref{Glo}.

\begin{prop}
\label{mainlem}
Let $(X, d, \mu)$ be a $A$-uniformly perfect, complete, doubling metric space and let $Y$ be a closed subset of $X$. Then, 
the  full measure set $M_{\Omega} \cup Y$ admits a bi-Lipschitz embedding into some Euclidean space if the followings are satisfied:
\begin{enumerate}
\item $Y$ admits a bi-Lipschitz embedding into some $\R^{M_1}$,
\item  $\Omega = X\setminus Y$ admits uniformly Christ-local  bi-Lipschitz embeddings into some $\R^{M_2}$. 
\end{enumerate}
 The bi-Lipschitz constant and dimension of receiving Euclidean space depend on the data of metric space $X$, the doubling constant,  $M_1$, $ M_2$, and the bi-Lipschitz constants in conditions $(1)$ and $(2)$. 
\end{prop}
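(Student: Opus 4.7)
The plan is to exhibit the bi-Lipschitz embedding as a direct sum of several pieces, each Lipschitz on all of $X$ and co-Lipschitz on a distinct regime of point-pairs, then extend by uniform continuity from the full-measure dense subset $\M_{\Omega}\cup Y$ to $X$. Pairs of points in $\M_{\Omega}\cup Y$ fall into three regimes: (a) both points in $Y$; (b) both in $\Omega$ with their enclosing Christ cubes close in Whitney distance (the \emph{W-local} case); and (c) the remaining case, in which either one point lies in $Y$ or the two cubes are far in Whitney distance (the \emph{W-large} and mixed cases).

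For regime (a), apply Corollary \ref{McShane2} to the bi-Lipschitz embedding $Y \to \R^{M_1}$ from condition (1) to obtain a Lipschitz extension $F : X \to \R^{M_1}$ that restricts to the given embedding on $Y$. This $F$ is bi-Lipschitz on $Y$ by construction and will also participate in regime (c).

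For regime (b), combine the Christ-local embeddings $h_Q : Q^{**} \to \R^{M_2}$ from condition (2) with the Lipschitz cutoffs $\varphi_Q$ of Lemma \ref{partition}: for each $Q \in \M_{\Omega}$, set $\tilde h_Q = \varphi_Q \cdot (h_Q - h_Q(x_Q))$ for a fixed base point $x_Q \in Q$. By Proposition \ref{comparability} and Lemma \ref{partition}, $\tilde h_Q$ is uniformly Lipschitz on $X$ and supported in $Q^{**}$. Choose a Whitney radius $\rho$ larger than the bound on $d_\W(Q, Q')$ implied by $Q^{**} \cap (Q')^{**} \neq \emptyset$ (such a bound follows from the additive triangle inequality for $d_\W$), and apply Lemma \ref{color} to obtain a coloring $K : \M_{\Omega} \to \{1,\ldots,M\}$. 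Define
\[
G_i(x) = \sum_{K(Q) = i} \tilde h_Q(x), \qquad i = 1,\ldots,M.
\]
By the choice of $\rho$, at any point at most one summand of $G_i$ is nonzero, so each $G_i : X \to \R^{M_2}$ is uniformly Lipschitz. For a W-local pair $p \in R$, $q \in R'$ with $R' \subset R^{**}$, choosing the color $i = K(R)$ reduces the co-Lipschitz estimate to the bi-Lipschitz behavior of $h_R$ on $R^{**}$.

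Regime (c) is the main obstacle. I plan to append the component $D(x) = \dist(x, Y)$ and rely on the pair $(F, D)$ to handle these cases. If one of $\dist(p, Y), \dist(q, Y)$ dominates the other, then $d(p, q) \asymp \dist(p, Y) + \dist(q, Y)$ and the $1$-Lipschitz function $D$ directly yields the co-Lipschitz lower bound. If the two distances to $Y$ are comparable but $d_\W(R, R')$ is large—larger than a threshold determined by the Lipschitz constant of $F$—then the nearest $Y$-projections $y_p, y_q$ satisfy $d(y_p, y_q) \asymp d(p, q)$, and the bi-Lipschitz behavior of $F$ on $Y$, together with Lipschitz control of $|F(p) - F(y_p)|$ and $|F(q) - F(y_q)|$ by $\dist(p, Y)$ and $\dist(q, Y)$, gives the bound. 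The delicate step is the coordination of the W-local and W-large thresholds, the coloring radius $\rho$, and the constant $\epsilon$ of Definition \ref{Q^*}: they must be tuned so that the three regimes interlock, covering every pair without gaps, while keeping the dimension of the receiving Euclidean space finite. Carrying out this bookkeeping against the Christ-Whitney data of $X$ is the central technical content of the proof; once it is in place, assembling $\Phi = (F, G_1, \ldots, G_M, D)$ and passing to the completion finishes the argument.
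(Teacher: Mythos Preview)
Your architecture matches the paper's: extend the embedding of $Y$ via McShane to a Lipschitz $F$, assemble the local maps with cutoffs and a coloring into $G=(G_1,\dots,G_M)$, append $D=\dist(\cdot,Y)$, and take $\Phi=(F,G,D)$. The split into W-local and W-large regimes, and the use of $(F,D)$ for the latter, is exactly what the paper does in Lemmas~\ref{W-lem1} and~\ref{W-lem2}.

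There is, however, a real gap, and it sits exactly where you say the ``delicate'' bookkeeping lives. Your normalization $\tilde h_Q=\varphi_Q\cdot(h_Q-h_Q(x_Q))$ forces $\tilde h_Q(x_Q)=0$. Now take $p=x_Q\in Q$ and $q\in R$ with $\diam Q\approx\diam R$ and $d_\W(Q,R)\approx 5$, so that $R\not\subset Q^{**}$ and $Q\not\subset R^{**}$. Then $\tilde h_Q(q)=0$ as well, so the $K(Q)$-slot of $G$ sees nothing; symmetrically for $K(R)$. Since $\dist(p,Y)\approx\diam Q\approx\diam R\approx\dist(q,Y)$, the component $D$ is useless, and $d_\W\approx 5$ is far below the threshold (of order $M_1L_1^2$) at which the projection-to-$Y$ argument for $F$ can work. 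You cannot close this by tuning: the constant $\epsilon$ in Definition~\ref{Q^*} is fixed by the hypothesis (the Christ-local embeddings are only assumed on $Q^{**}$ for that $\epsilon$), and enlarging the coloring radius $\rho$ does nothing for the co-Lipschitz bound.

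The paper's remedy is a different normalization: translate each $h_Q$ so that $\tilde h_Q(Q^*)\subset B(0,\,cL_2\diam Q)\setminus B(0,\,(cL_2)^{-1}\diam Q)$. Then for $p\in Q$ and any $q\notin Q^{**}$ one has $|\tilde h_Q(p)-\tilde h_Q(q)|=|\tilde h_Q(p)|\gtrsim\diam Q$, and whenever $d_\W(Q,R)$ is below the W-large threshold one also has $d(p,q)\lesssim\diam Q$; this single estimate bridges the entire intermediate range. With that annular normalization in place the map $H$ is W-local co-Lipschitz on all of $\{d_\W<16M_1L_1^2\}$, $(F,D)$ handles the complement, and the regimes genuinely interlock. (A smaller issue: in your regime~(b) you invoke bi-Lipschitzness of $h_R$ on $R^{**}$, but $\varphi_R\equiv 1$ only on $R^*$; the paper handles $q\in Q^{**}\setminus Q^*$ by passing to a neighboring cube whose $R^*$ contains both points.)
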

We  briefly outline the proof of Proposition \ref{mainlem}. We first extend a (bi)-Lipschitz map $f$ on $Y$ to a global Lipschitz map $g$ on $X$, using McShane's  extension theorem (see Theorem \ref{McShane} and Corollary \ref{McShane2}). We then suppose that $f$ is a $L_1$-bi-Lipschitz embedding from $Y$ into $\mathbb R^{M_1}$. From McShane's  theorem, we have a $\sqrt{M_1} L_1$-Lipschitz extension map
\begin{equation*}
g: X \longrightarrow \mathbb R^{M_1}\; \;\text{such that}\;\; g|_Y=f.
\end{equation*}
From now on we fix such $L_1$  and  $M_1$ is chosen sufficiently large relative to other data $C_1 ,\, A,$ and $\delta$. The precise choice of $M_1$ will be made in connection with the estimate in \eqref{$M_1$}.

In general, the map $g$ is not globally co-Lipschitz on a full measure set $\M_{\Omega}$ of $\Omega$. Therefore, we next shall construct a co-Lipschitz map using a local and large scale argument in the sense of Whitney distance on a Christ-Whitney decomposition (see Definition \ref{W-distance} and Definition \ref{W}).
\begin{defn}
\label{W}
Let $Q$ be any fixed cube in $ \M_{\Omega}$.
We say $f :\M_{\Omega} \rightarrow \R^n$  is $\W$-local co-Lipschitz if  it is co-Lipschitz for any two points $p \in Q$, $q\in R$ where $R $ is in $B_{\W}(Q,\, 16M_1{L_1}^2)$. We say $f$ is $\W$-large scale co-Lipschitz if it is co-Lipschitz for any two points $p \in Q$ and $q \in R$ where $R$ is not in $B_{\W}(Q,\, 16M_1{L_1}^2)$.
\end{defn}

In Subsection \ref{W-large}, we will construct a $\W$-large scale co-Lipschitz map and global Lipschitz map on $\M_{\Omega}$. To this end, we will break the complement of an arbitrary  Whitney distance ball of radius $16 M_1{L_1}^2$ into two parts using relative distance in terms of the distance between two cubes and their maximum diameter.  We shall see that McShane's extension map $g$ and distance map from $Y$, $\dist(\cdot, \,Y)$, which are global Lipschitz maps, are $\W$-large scale co-Lipschitz on these two parts respectively.
 
In Subection \ref{W-local}, we will construct a $\W$-local co-Lipschitz map on $\M_{\Omega}$ via putting together all local patches of bi-Lipschitz embeddings. We will assign different colors to elements in a Christ-Whitney decomposition within arbitrary Whitney distance ball of radius $16 M_1{L_1}^2$.
  
Finally, in Subection \ref{Glo}, we will construct a global bi-Lipschitz embedding on the full measure set $\M_{\Omega}\cup Y$ of $X$ completing the proof of Lemma \ref{mainlem}.
\begin{figure}[h]
 \centering\subfloat[$\W$-local co-Lipschitz]
  {\includegraphics[width = 6cm]{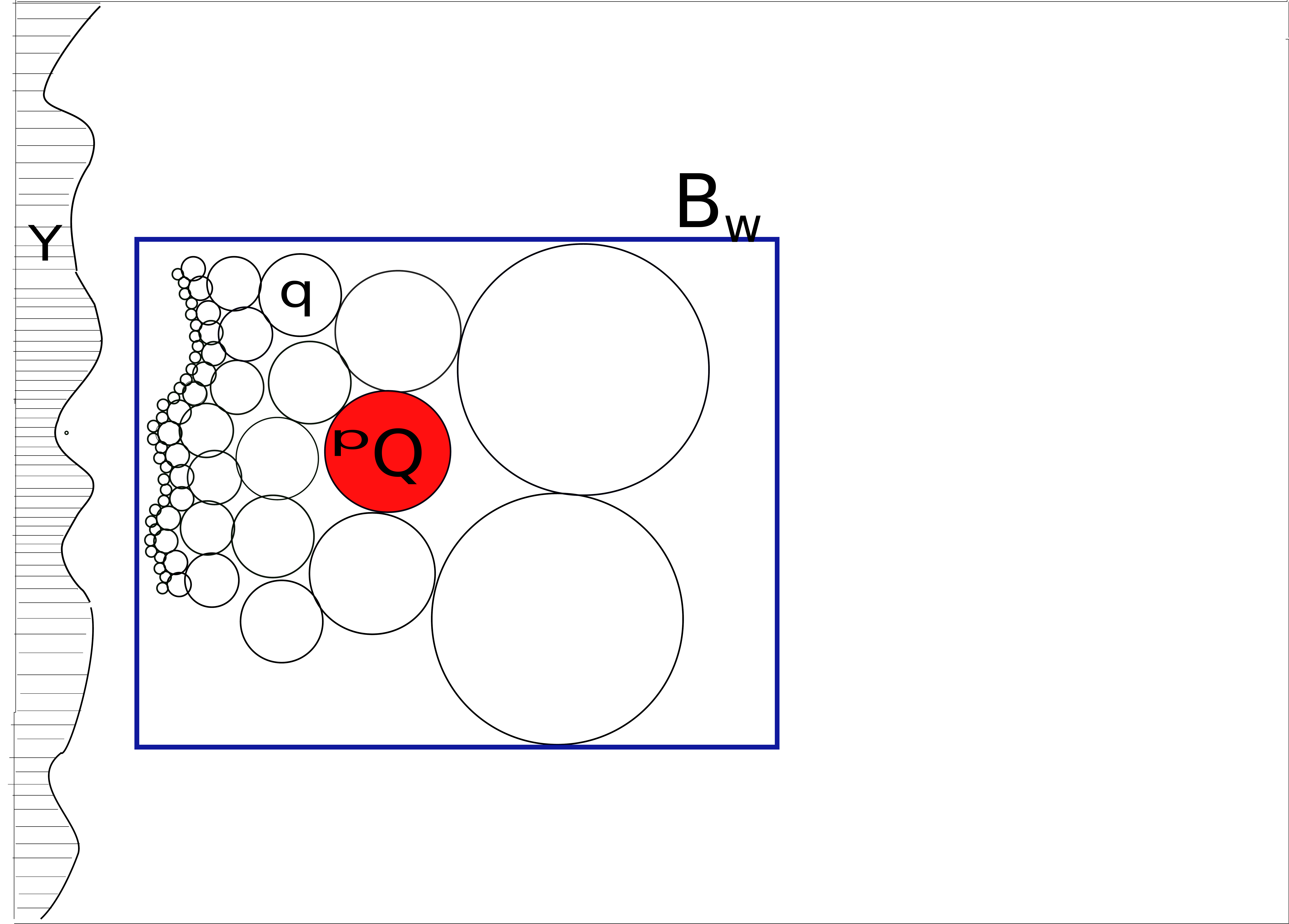}}\hspace*{3cm}
 \subfloat[$\W$-large scale co-Lipschitz]
  {\includegraphics[width = 6cm]{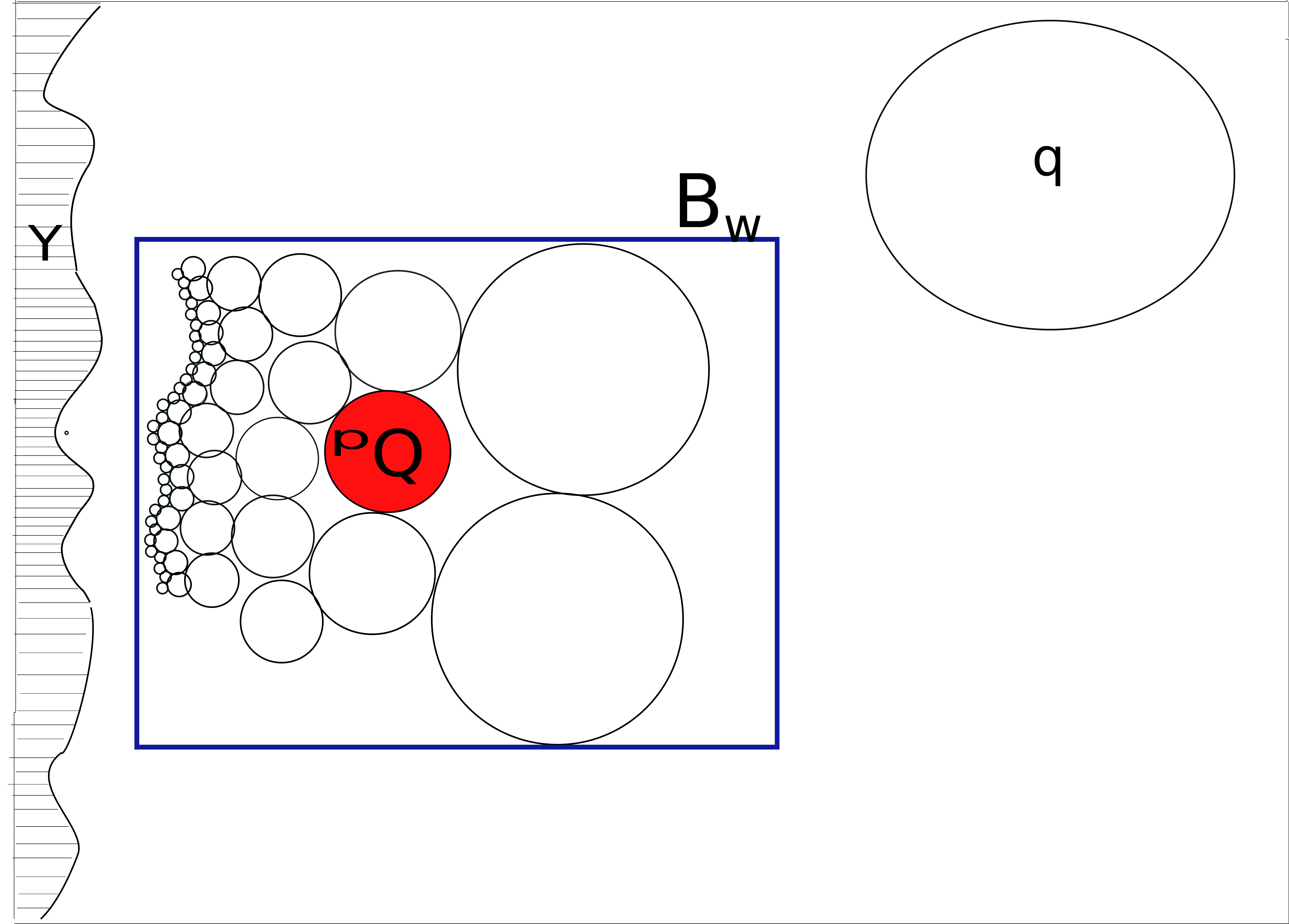}}
 \caption{ Let the square be the Whitney distance ball of radius $16{M_1}{L_1}^2$ centered at $Q$.
 $\W$-local co-Lipschitz means $|f(p)-f(q)|\gtrsim d(p,\, q)$ for any $p\in Q$ and $q\in R$ where $d_{\W}(Q,\, R)<16 {M_1}{L_1}^2$. $\W$-large scale co-Lipschitz means $|f(p)-f(q)|\gtrsim d(p,\, q)$ for $p\in Q$ and $q\in R$ with $d_{\W}(Q,\, R)\geq16 {M_1}{L_1}^2$. }
 \label{fig2}
\end{figure}

\subsection{$\W$-Large Scale Co-Lipschitz and Global Lipschitz Map on $\M_{\Omega}$ }
\label{W-large} 
We construct a $\W$-large scale co-Lipschitz and global Lipschitz map on a full measure set $\M_{\Omega} \subset \Omega$. Roughly speaking, McShane's extension map guarantees a $\W$-large scale co-Lipschitz bound for points $p,\, q$ in $\M_{\Omega}$ whose distance is big enough with respect to the maximum diameter of cubes containing them. Whenever $p \in Q$ and $q \in R$ have the property that $\dist(Q, \,R)$ exceeds their maximum diameter by a fixed constant, we consider points $z ,\,z'$ in $Y$ which realize distances to $p,\,q$ respectively. Then,  
$|g(p)-g(z)|$ and $|g(q)-g(z')|$ are approximately greater than the maximum diameter and we can conclude co-Lipschitz from the triangle inequality. Furthermore, when the distance between two points is small enough with respect to the maximum diameter,  $|d( p,\, Y) -d(q,\, Y)|$ is approximately greater than the maximum diameter (see Figure \ref{fig3}).

\begin{lemma}
\label{W-lem1}
Let $Q$ be any fixed cube in $\M_{\Omega}$.
For any two points $p \in Q$ and $q \in R$, where $d_{\W}(Q,\, R)\geq 16{M_1}{L_1}^2$, the  McShane extension map $g$ and $\dist(\cdot,\, Y)$ guarantee W-large scale co-Lipschitz bounds. More precisely, 
\begin{enumerate}
 \item If $\dfrac {\dist(Q, \,R)}{\max(\diam(Q),\diam(R))} \geq \dfrac{8{M_1}{L_1}^2}{1+\frac{4\,C_1\,A}{\delta}}$, then $|g(p)-g(q)| \geq C(L_1, \,M_1) \, d(p,\, q)$.
 \item If $\dfrac{\dist(Q, \,R)} {\max(\diam(Q),\diam(R))}  \leq \dfrac{8{M_1}{L_1}^2}{1+\frac{4\,C_1\,A}{\delta}}$, then $|\dist(p, \,Y)-\dist(q, \,Y)| \geq C(L_1,\, M_1) \, d(p,\, q)$.
\end{enumerate}
\end{lemma}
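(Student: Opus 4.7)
The plan is to leverage the Christ--Whitney bound $\diam Q \leq \dist(Q, Y) \leq (4C_1 A/\delta) \diam Q$ in order to replace the inaccessible interior points $p, q$ by proxies in $Y$ where $f = g|_Y$ carries a genuine bi-Lipschitz lower bound. Write $M = \max(\diam Q, \diam R)$ and $m = \min(\diam Q, \diam R)$. The standing hypothesis $d_{\W}(Q, R) \geq 16 M_1 L_1^2$ translates into $\dist(Q, R) \geq 16 M_1 L_1^2 \, m$, and the dichotomy between the two cases corresponds to whether or not $\dist(Q, R)$ also dominates the larger diameter $M$ by a comparable factor.

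For case (1), I would select $z, z' \in Y$ with $d(p, z) \leq (1 + 4C_1 A/\delta)\diam Q$ and $d(q, z') \leq (1 + 4C_1 A/\delta)\diam R$, using $\dist(p, Y) \leq \diam Q + \dist(Q, Y)$ and closedness of $Y$. The case hypothesis together with $d(p, q) \geq \dist(Q, R)$ forces
\[
d(p, z) + d(q, z') \leq 2(1 + 4C_1 A/\delta) M \leq \frac{(1 + 4C_1 A/\delta)^2}{4 M_1 L_1^2}\, d(p, q),
\]
and I will denote this upper bound by $\eta \, d(p, q)$. Then $d(z, z') \geq (1 - \eta) d(p, q)$, and combining the bi-Lipschitz estimate $|g(z) - g(z')| = |f(z) - f(z')| \geq (1/L_1) d(z, z')$ with the $\sqrt{M_1} L_1$-Lipschitz bound on $g$ via the reverse triangle inequality gives
\[
|g(p) - g(q)| \geq \frac{1 - \eta}{L_1}\, d(p, q) - \sqrt{M_1}\, L_1\, \eta\, d(p, q) = \frac{d(p, q)}{L_1}\Bigl(1 - \eta - \sqrt{M_1}\, L_1^2\, \eta\Bigr).
\]
Since $\sqrt{M_1}\, L_1^2\, \eta = (1 + 4C_1 A/\delta)^2 / (4 \sqrt{M_1})$, choosing $M_1$ large enough relative to $C_1, A, \delta$ (this is precisely the content of the ``sufficiently large $M_1$'' remark preceding the lemma) makes the parenthesized expression bounded below by a positive constant, producing the desired co-Lipschitz bound.

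For case (2), I would first observe that combining $\dist(Q, R) \geq 16 M_1 L_1^2\, m$ with the case hypothesis $\dist(Q, R) \leq (8 M_1 L_1^2 / (1 + 4C_1 A/\delta))\, M$ yields $m \leq M / (2(1 + 4C_1 A/\delta))$, so the two cubes have drastically different sizes. Assume without loss of generality $\diam Q = m$ and $\diam R = M$. Then $\dist(p, Y) \leq (1 + 4C_1 A/\delta)\, m \leq M/2$, while $\dist(q, Y) \geq \dist(R, Y) \geq \diam R = M$, producing the gap $|\dist(p, Y) - \dist(q, Y)| \geq M/2$. Since $d(p, q) \leq \diam Q + \dist(Q, R) + \diam R$ is bounded by a constant multiple of $M$ depending on $M_1, L_1, C_1, A, \delta$, and $\dist(\cdot, Y)$ is $1$-Lipschitz, this gap dominates $d(p, q)$ by a factor $c(M_1, L_1)$ as claimed; the symmetric case $\diam R = m$, $\diam Q = M$ is handled identically.

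The principal obstacle is the bookkeeping in case (1): the Lipschitz constant $\sqrt{M_1} L_1$ of the extension grows with $M_1$ while the error factor $\eta$ decays like $1/M_1$, so their product $\sqrt{M_1}\, L_1^2\, \eta$ behaves like $1/\sqrt{M_1}$ and can be made small only once $M_1$ is calibrated against $(1 + 4C_1 A/\delta)^2$. Matching this choice against the specific thresholds $16$ and $8$ that appear in the Whitney-distance radius and in the case split is what makes the statement a clean dichotomy rather than requiring a third buffer regime. Case (2) by comparison is essentially mechanical once the size-asymmetry has been extracted from the overlap of the two hypotheses.
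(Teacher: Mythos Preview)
Your proof is correct and follows essentially the same approach as the paper: in case~(1) you replace $p,q$ by nearby proxies $z,z'\in Y$, exploit the bi-Lipschitz lower bound for $f=g|_Y$, and absorb the error terms using the Lipschitz bound $\sqrt{M_1}L_1$ on $g$ together with the freedom to enlarge $M_1$; in case~(2) you extract the size asymmetry $m\le M/(2(1+4C_1A/\delta))$ from the two hypotheses and read off the gap in $\dist(\cdot,Y)$ directly. The only cosmetic difference is that the paper proves $d(z,z')\ge \tfrac12 d(p,q)$ by contradiction whereas you obtain $d(z,z')\ge(1-\eta)d(p,q)$ by a direct triangle-inequality estimate, which is arguably cleaner but amounts to the same computation.
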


\begin{figure}[h]
 \centering
 \subfloat[$\dfrac {\dist(Q, \,R)}{\max(\diam(Q),\diam(R))} \geq  \dfrac{8{M_1}{L_1}^2}{1+\frac{4\,C_1\,A}{\delta}}$]
  {\includegraphics[width = 6cm]{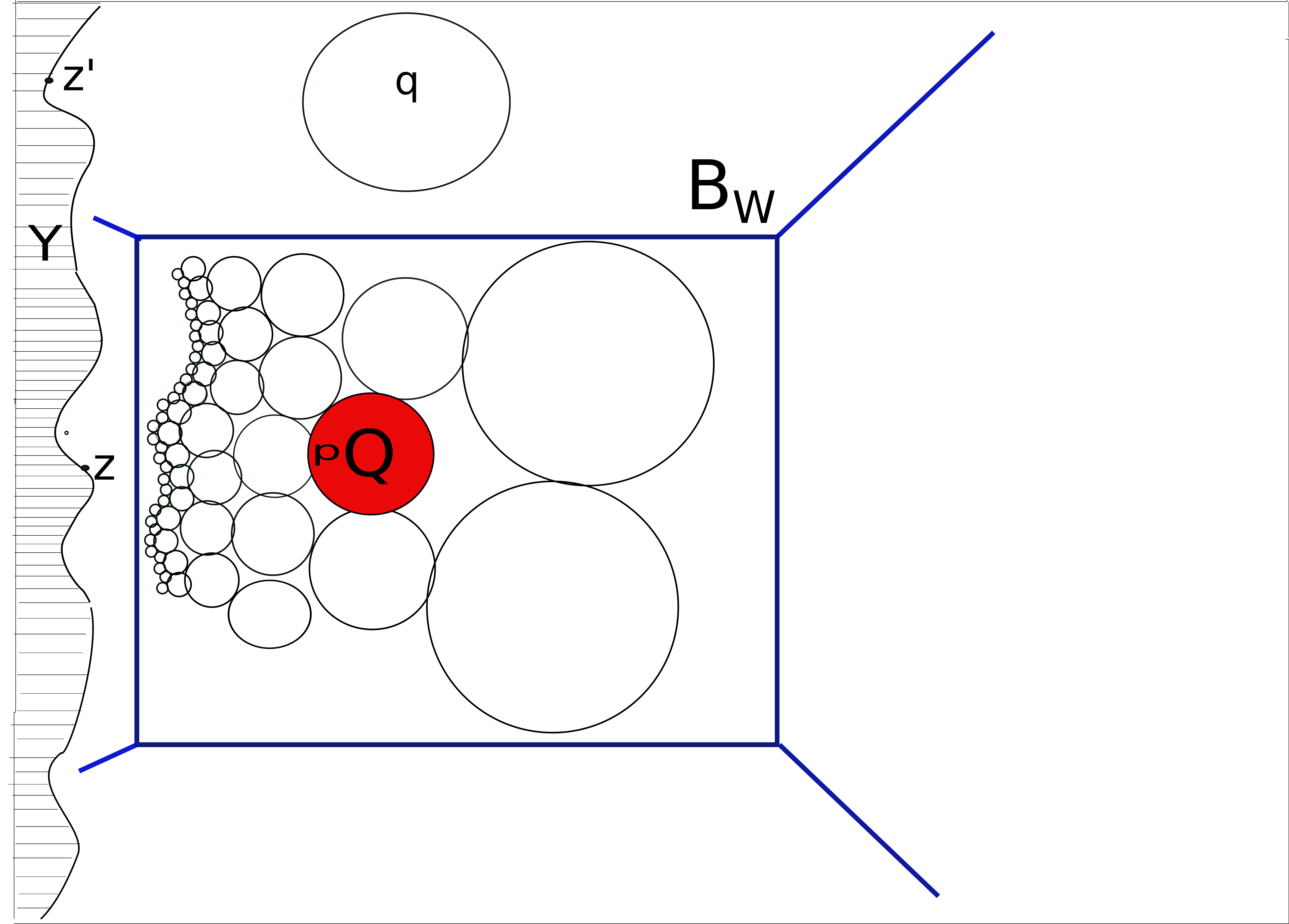}} \hspace*{3cm}
 \subfloat[$\dfrac{\dist(Q, \,R)} {\max(\diam(Q),\diam(R))}  \leq  \dfrac{8{M_1}{L_1}^2}{1+\frac{4\,C_1\,A}{\delta}}$]
  {\includegraphics[width = 6cm]{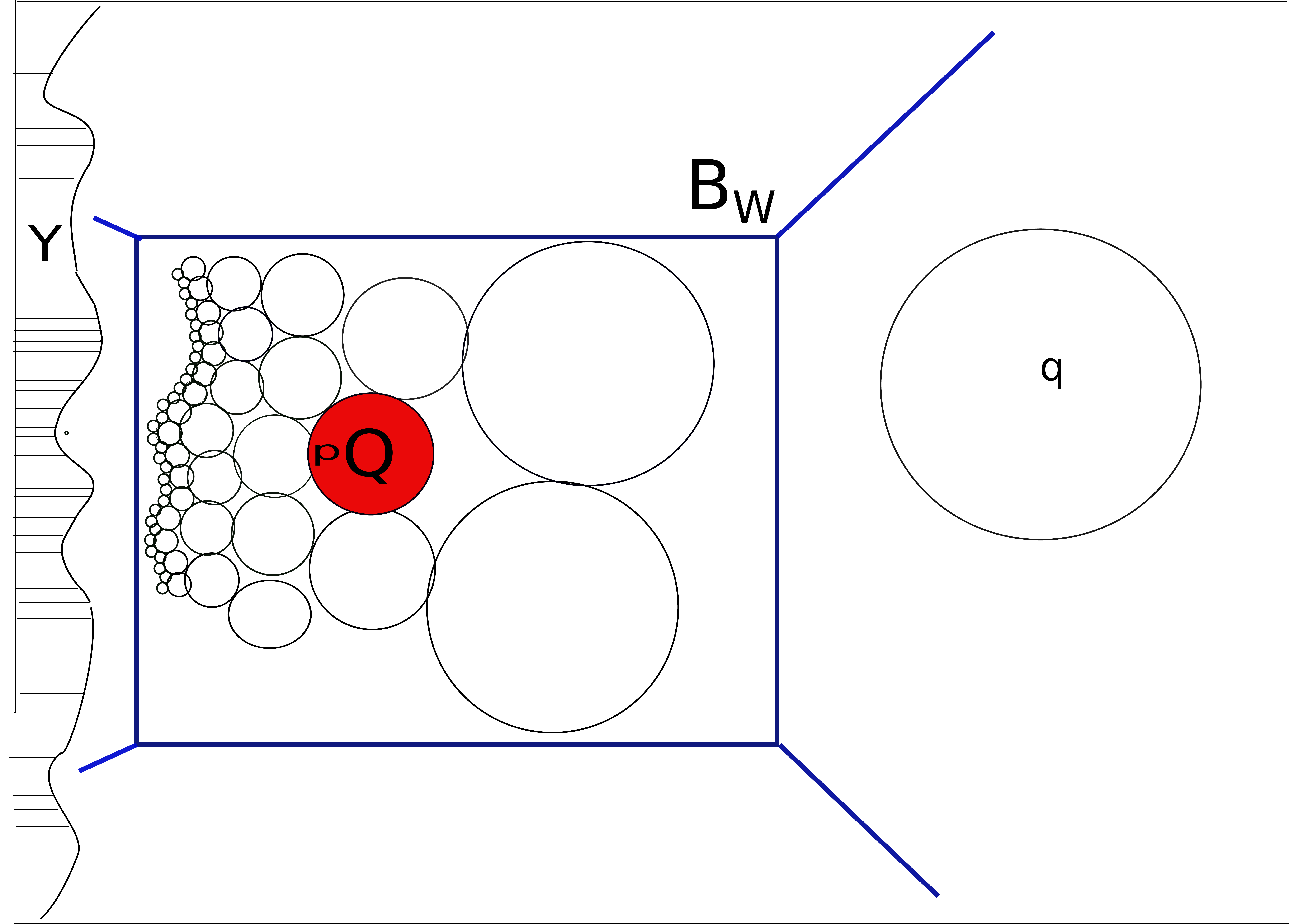}}
 \caption{$g$ and $\dist(\cdot,\, Y)$ guarantee $\W$-large scale co-Lipschitz bounds. }
 \label{fig3}
\end{figure}
\begin{proof}
We may assume that $\diam(R) \geq \diam(Q)$ without loss of generality. We  choose $z, \,z^{\prime} \in Y$ such that $ \dist(Y, \,Q) =\dist(z,\,Q)$ and $ \dist(Y, \,R) =\dist(z^{\prime}, \,R)$. We claim that $z\neq z'$. In fact, $d(z,\,z')\geq \frac{1}{2}\,d(p,\,q)$. To conclude the claim, we suppose that $d(z,\,z')<\frac{1}{2}\,d(p,\,q)$. Then,
\begin{equation*}
d(p,\,q) \leq d(p,\,z)+d(z,\, z')+d(z',\, q).
\end{equation*}
Thus, we have 
\begin{align*}
d(p,\,q) &\leq 2\,[d(p,\,z)+d(z',\, q)]\\
            &\leq 2\,[\dist(z,\, Q)+\diam(Q)+\dist(z',\, R)+\diam(R)]\\
            &\leq 2\,[\dist(Y,\, Q)+\diam(Q)+\dist(Y ,\, R)+\diam(R)]\\
            & \leq 2\,(\dfrac{4C_1A}{\delta}+1)[\diam(Q)+\diam(R)]\\
            &\leq 2\,(\dfrac{4C_1A}{\delta}+1)(\dfrac{1+\frac{4C_1A}{\delta}  }{8M_1{L_1}^2 }+\frac{1}{16M_1{L_1}^2} )\,\dist(Q,\, R)\\
            &\leq \dfrac{ (1+\frac{4C_1\, A}{\delta} )( 3+\frac{4C_1\, A}{\delta} ) }{ 8M_1{L_1}^2}\,d(p,\,q).
\end{align*}
This is a contradiction  provided $M_1$ is selected sufficiently large relative to $C_1,\, A$, and $\delta$.
Now, 
\begin{align*}
|g(p)-g(q)|&\geq |f(z)-f(z^{\prime})|-|f(z)-g(p)|-|f(z^{\prime})-g(q)|\\
                  & \geq \dfrac{1}{L_1}\,d(z,\, z')-C\,d(z,\,p)-C\,d(z',\, q).                                             
\end{align*}
where $C=\sqrt{M_1}L_1$ from McShane's theorem.
We have
\begin{align*}
d(p, z) &\leq (\frac{4C_1\,A}{\delta} +1)\,\diam(Q) \leq \dfrac{(\frac{4C_1\,A}{\delta}+1)}{16{M_1}{L_1}^2}\,\dist(Q,\, R) \\
           &\leq \dfrac{(\frac{4C_1\,A}{\delta}+1)}{16{M_1}{L_1}^2}\,d(p,\,q).
 \end{align*}
Similarly, we have
\begin{equation*}
|g(q)-g(z^{\prime})| \leq L_1 \,d(z^{\prime}, \,q) \leq \dfrac{(\frac{4C_1\,A}{\delta}+1)^2}{8{M_1}{L_1}^2}\, d(p,\,q). 
\end{equation*} 
In conclusion, 
\begin{equation}
\begin{split}
\label{$M_1$}
|g(p)-g(q)| & \geq [\dfrac{1}{2L_1} -2C \dfrac{(\frac{4C_1\,A}{\delta}+1)^2}{8{M_1}{L_1}^2}]\,d(p,\,q)\\
                   &\geq \dfrac{1}{2L_1}[1-\dfrac{(\frac{4C_1\,A}{\delta}+1)^2}{2\sqrt{M_1}}]\,d(p,\,q)\\
                   &\geq \dfrac{1}{4L_1}\,d(p,\,q)
\end{split}
\end{equation}
since we can choose $M_1$ sufficiently large. This completes the proof of the first case.

In second case, we have
\begin{equation*}
16M_1{L_1}^2\diam(Q) \leq  \dist(Q,\, R) \leq \dfrac{8{M_1}{L_1}^2}{1+\frac{4\,C_1\,A}{\delta}}\,\diam(R).
\end{equation*}
Therefore, $2\,(1+\dfrac{4\,C_1\, A}{\delta})\,\diam(Q) \leq \diam(R)$.
We now have
\begin{align*}
|\dist(p\, Y)-\dist(q\, Y)| &\geq \dist(q, \,Y)-\dist(p,\,Y)\\
                                     &\geq \dist(R,\,Y)-\dist(Q,\, Y)-\diam(Q)\\
                                     &\geq \diam(R)-(1+\dfrac{4\,C_1\,A}{\delta})\,\diam(Q)\\
                                     &\geq \frac{1}{2}\,\diam(R)
\end{align*}
while $d(p,\,q) \leq \diam(Q)+\dist(Q,\,R)+\diam(R) \lesssim \diam(R)$. Thus, we proved the second case.
\end{proof}

\subsection{$\W$-Local Co-Lipschitz and Global Lipschitz Map on $\M_{\Omega}$ }
\label{W-local}
 We next construct a $\W$-local co-Lipschitz and global Lipschitz map on a full measure set $\M_{\Omega} \subset \Omega$ into some  Euclidean space. In general, $M_1+1$, the dimension of the target space  of $g (\cdot)\times \dist(\cdot, \, Y)$ is not large enough to construct a co-Lipschitz map. Hence, we will use a coloring map that gives additional dimension of the Euclidean space (see Lemma \ref{color}).

Suppose that  $h_{Q}$'s are $L_2$-bi-Lipschitz embeddings of $Q^{**}$ for each $Q \in \M_{\Omega}$ into $\mathbb R^{M_2}$ with uniformly determined  $L_2$ and $M_2$.  Now we consider the map
\begin{equation*}
 \widetilde{h}_{Q}=h _{Q}\cdot \varphi_{Q}: X \longrightarrow \R^{M_2};
 \end{equation*} 
 it is bi-Lipschitz on $Q^*$, Lipschitz on $X$, and supported on $Q^{**}$. We recall that $\{\varphi_Q\}$ is a family of Lipschitz cutoff functions as in Lemma \ref{partition}.
 Then, we may assume that for some $c$
\begin{equation*}
\widetilde{h}_Q(Q^{*}) \subset B(0, c\,L_2\diam(Q)) \setminus B(0, \dfrac{1}{c\,L_2} \diam(Q))
\end{equation*}
 because we can postcompose with an isometric translation map of $\R^{M_2}$  if necessary.
Next, we will put together all patches to make a $\W$-local co-Lipschitz map by assigning different colors to each element in $M_{\Omega}$. We will denote $\{e_1,\; e_2,\; \ldots,\; e_M \}$ by an orthonormal basis for $\mathbb R^{M}$.
\begin{lemma}
\label{W-lem2}
The following map $H$ from $\M_{\Omega}$ into $(\R^{M_2})^M$ given by
\begin{equation}
H(p) = \sum_{Q \in \M_{\Omega}} \widetilde{h}_{Q}(p) \otimes e_{K(Q)},
\end{equation}
is a global Lipschitz and W-local co-Lipschitz map. The ($\W$-local) bi-Lipschitz constant depends on $L_1,\;L_2\; \text{and}\; M_1$.  That is, 
\begin{equation*}
 |H(p)-H(q)| \geq  C(L_1, \,L_2,\, M_1 ) \, d(p, q) 
\end{equation*}
 for  any points $p$ in any fixed $Q$ and $q$ in $R $ where $d_\W(Q,R)< 16{M_1}{L_1}^2$.
\end{lemma}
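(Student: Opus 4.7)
I will prove the two claims by decomposing $H$ along its color slots.

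\textbf{Global Lipschitz.} Since $\{e_j\}_{j=1}^M$ is an orthonormal basis,
\begin{equation*}
|H(p)-H(q)|^2 = \sum_{j=1}^M \Bigl|\sum_{Q:\,K(Q)=j}\bigl(\widetilde{h}_Q(p)-\widetilde{h}_Q(q)\bigr)\Bigr|^2.
\end{equation*}
The product rule applied to $\widetilde{h}_Q = h_Q\varphi_Q$, together with $|h_Q|\lesssim L_2\diam(Q)$, $\mathrm{Lip}(h_Q|_{Q^{**}})\le L_2$, and $\mathrm{Lip}(\varphi_Q)\le C/\diam(Q)$, shows that each $\widetilde{h}_Q$ has a uniform Lipschitz constant $L_2'$ independent of $Q$. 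By Lemma~\ref{partition}(5) at most $2N$ of the summands are nonzero at $p$ or $q$, so two applications of Cauchy--Schwarz give a global Lipschitz bound depending on $L_2$, $C$, $N$.

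\textbf{W-local co-Lipschitz.} Fix $p\in Q$ and $q\in R$ with $d_\W(Q,R)<\rho:=16M_1L_1^2$; without loss of generality $\diam(Q)\le\diam(R)$. From $\dist(R,Y)\le\dist(Q,Y)+\diam(Q)+\dist(Q,R)$ and the Christ--Whitney property, $\diam(R)\le(\rho+1+4C_1A/\delta)\diam(Q)$, so $d(p,q)$ is bounded by a constant (depending on $\rho$) times $\diam(Q)$. The coloring parameter in Lemma~\ref{color} is chosen as a sufficiently large multiple of $\rho$ (with multiplicative factor depending on $\epsilon, C_1, A, \delta$), so that for any cube $S$ the $K(S)$-slot at a point $x$ contains only the single contribution $\widetilde{h}_S(x)$: indeed, if $S'\ne S$ had $K(S')=K(S)$ and $x\in(S')^{**}$, then unpacking Definition~\ref{Q^*} as a two-step $\epsilon$-chain and invoking Proposition~\ref{comparability} bounds $d_\W(S,S')$ by an absolute constant when $x\in S$, and by a constant multiple of $\rho$ when $x$ is merely W-close to $S$, contradicting the coloring.

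I then choose the color slot according to the position of $q$ relative to $Q$. \emph{Case 1:} If $q\in Q^*$ then both $\varphi_Q(p)=\varphi_Q(q)=1$, so the $K(Q)$-slot difference is $h_Q(p)-h_Q(q)$, giving $|H(p)-H(q)|\ge L_2^{-1}d(p,q)$ by bi-Lipschitzness of $h_Q$. \emph{Case 2:} If $q\in Q^{**}\setminus Q^*$, by the very definition of $Q^{**}$ there exists $R'\in Q^*$ with $\dist(R,R')<\epsilon\min\{\diam(R),\diam(R')\}$; by symmetry of the $^*$-relation, $Q\in(R')^*$ and $R\in(R')^*$, so $p\in Q\subset(R')^*$ and $q\in R\subset(R')^*$, and the $K(R')$-slot difference is $h_{R'}(p)-h_{R'}(q)$, yielding the same bi-Lipschitz bound. \emph{Case 3:} If $q\notin Q^{**}$, then $\widetilde{h}_Q(q)=0$ and the $K(Q)$-slot difference is $h_Q(p)$, whose magnitude is at least $(cL_2)^{-1}\diam(Q)$ by the annular property; combined with $d(p,q)\lesssim_\rho\diam(Q)$, this yields $|H(p)-H(q)|\gtrsim d(p,q)$.

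The main obstacle is to justify that in Case~2 the intermediate-cube slot $K(R')$ remains uncontaminated by contributions from other same-color cubes, and likewise in Case~3 that no same-color competitor enters the $K(Q)$-slot at $q$. These are precisely the hypotheses enforced by taking the coloring parameter to be a large enough constant multiple of $\rho$: a putative competitor $S'$ would be forced by the two-step chain estimates through $Q$, $R$, and $R'$, together with the diameter comparability from Proposition~\ref{comparability}, into a W-distance range that simultaneously contradicts the coloring and lies inside the regime where the comparability applies. Once this is done, the three cases above exhaust all possibilities and produce the co-Lipschitz bound $|H(p)-H(q)|\ge C(L_1,L_2,M_1)d(p,q)$.
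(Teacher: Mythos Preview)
Your proposal is correct and follows essentially the same three-case split as the paper (according to whether $q\in Q^*$, $q\in Q^{**}\setminus Q^*$, or $q\notin Q^{**}$), using the same intermediate cube $R'\in Q^*$ in the middle case and the annular bound $|\widetilde h_Q(p)|\ge (cL_2)^{-1}\diam(Q)$ in the far case. You are in fact more careful than the paper on one point: the paper simply writes $|H(p)-H(q)|\ge|\widetilde h_Q(p)-\widetilde h_Q(q)|$ without comment, whereas you correctly note that this color-slot isolation requires the coloring radius in Lemma~\ref{color} to be a sufficiently large multiple of $16M_1L_1^2$ (not just $16M_1L_1^2$ itself) so that no same-color competitor $S'$ with $p\in (S')^{**}$ or $q\in(S')^{**}$ can appear.
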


\begin{proof}
Since $\widetilde h_{Q}$ is bi-Lipschitz on $Q^*$ with the uniform bi-Lipschitz constant $L_2$, Lipschitz on $X$, and supported on $Q^{**}$, the map $H$ is a  finite sum of Lipschitz maps from Proposition \ref{touch}. Thus, it is  Lipschitz on ${\Omega}$. Now, we will show that $H$ is a $\W$-local co-Lipschitz map according to positions of two points $p$ and $q$ on $\M_{\Omega}$. There are three cases.

$(1)$ If $p,\; q \in Q^*$, then $\widetilde h _{Q}$ is bi-Lipschitz on $Q^*$ and $Q$ is the element in $\M_{\Omega}$ that shares the same color at $p$ and $q$. Therefore, we find that
\begin{align*}
| H(p)-H(q)|& \geq | \widetilde h_{Q}(p)- \widetilde h_{Q}(q)|\\
                      &=|h_{Q}(p)- h_{Q}(q)|\;\;\;  \; \text{since}\;{\varphi_{Q}}|_{Q^*} = 1\\
                      &\geq \frac{1}{L_2}\,d(p,\, q)
\end{align*}
since $h_{Q}$ is $L_2$-bi-Lipschitz.

$(2)$ If $p \in Q,\; q\notin Q^{**}$, then $\widetilde h_{Q}(q) = 0$. Thus, we have
\begin{align*}
 | H(p)-H(q)| &\geq | \widetilde h_{Q}(p)- \widetilde h_{Q}(q)|=|\widetilde h_{Q}(p)|\\
              &\geq \dfrac{1}{c\,L_2}\,\diam(Q).
\end{align*}              
On the other hand, we observe that
\begin{align*}
d(p,\,q) &\leq \diam(Q)+\dist(Q,\, R) +\diam(R)\\
            &\leq \diam(Q) +\dist(Q,\, R)+\dist(R,\,Y)\\
            &\leq 2\diam(Q)+2\dist(Q,\,R)+\dist(Q,\,Y).
\end{align*}              
Since $\dist(Q,\,Y)\leq \dfrac{4\,C_1\, A}{\delta}\,\diam(Q)$ and 
\begin{equation*}
\dist(Q,\,R) \leq 16M_1{L_1}^2\min\{\diam(Q),\, \diam(R)\}\leq16M_1{L_1}^2\diam(Q),
\end{equation*}
 we conclude
\begin{equation*}
d(p,\,q) \lesssim \diam(Q)
\end{equation*}
 and so $|H(p)-H(q)|\gtrsim d(p,\,q)$ as desired.
 
$(3)$ If $p \in Q,\; q\in Q^{**}$, then there is  $R \in Q^*$ so that $p,\;q \in R^*$ and $\widetilde h_{R}$ is bi-Lipschitz on $R^*$. Therefore, we conclude the following from the first case:
\begin{equation*}
 |H(p)-H(q)| \geq |\widetilde h_{R}(p)-\widetilde h_{R}(q)| \geq \frac{1}{L_2}\,d(p,\,q).
\end{equation*}
\end{proof}

\subsection{Global Bi-Lipschitz Embedding on a Full Measure Set  $ \M_{\Omega} \cup Y$ }
\label{Glo}
Finally, we are ready to construct a global bi-Lipschitz embedding on a full measure set of $X$. We define the map $F$ from $\M_{\Omega}\cup Y$ into $\mathbb R^{M_1} \times\mathbb (R^{M_2})^M \times \mathbb {R}$ as follows:
\begin{equation}
\label{global}
F(p)=
\begin{cases}
g(p) \times H(p) \times \dist(p, Y),& \text\;{for}\; p \in \M_{\Omega}\,;\\
f(p)\times \{0\}\times \{0\} , & \text{for}\; p \in Y.
\end{cases}
\end{equation}
\textit{}
Then $F$ is Lipschitz on a full measure set $\M_{\Omega} \subset \Omega$ because $g$ and  $\dist(\cdot, \,Y)$ are Lipschitz on $X$ and $H$ is a finite sum of Lipschitz maps on $\M_{\Omega}$. Moreover, when we define $H(q)=0$ for $q \in Y$, then for  every $p \in \M_{\Omega}$ and any $q \in Y$, we arrive at
\begin{align*}
|H(p)-H(q)|=|H(p)| =& |\sum_{Q \in \M_{\Omega}} \widetilde{h}_{Q}(p) \otimes e_{K(Q)}|\\
                                       &\leq N\,L_2\,\diam(Q)\\
                                       &\leq N\,L_2\,\dist(Q, Y)\\
                                      &\leq N\,L_2\,d(p,q)
\end{align*}
 We have shown that $F$ is co-Lipschitz on  $\M_{\Omega}$ by Lemma \ref{W-lem1} and Lemma \ref{W-lem2} and $F|_Y = f$ is co-Lipschitz. Finally, we have a bi-Lipschitz embedding $F$ from  a full measure set $\M_{\Omega}\cup Y $ of  $X$  into $\mathbb R^{M_1} \times\mathbb (R^{M_2})^M \times \mathbb {R}$. The bi-Lipschitz constant depends on the data of metric space $X$, the doubling constant of $\mu$,  $M_1$, $ M_2$, $L_1$ and $L_2$.
 Therefore, Proposition \ref{mainlem} is proved.
\section{Applications}\label{Grushin}
We recall the Rademacher-type theorems of Pansu and Cheeger. Then, we discuss their applications to the problem of bi-Lipschitz nonembedding. In contrast,  as an application of Theorem \ref{main} we will prove that the Grushin plane equipped with Carnot-Carath\'{e}odory distance embeds bi-Lipschitzly into Euclidean space.  Thus, we obtain the first example of a sub-Riemannian manifold admitting a bi-Lipschitz embedding.

We recall definitions of the Heisenberg group and the Grushin plane.
\begin{defn}
\label{Heisenberg}
The Heisenberg group $\mathbb H$ is $\mathbb R^3$ with horizontal distribution spanned by two vectors
\begin{equation*}
X_1=\dfrac{\partial}{\partial x}-\dfrac{y}{2}\dfrac{\partial}{\partial z }\;\;\;\;\text{  and }\;\;\;\; X_2=\dfrac{\partial}{\partial y}+\dfrac{x}{2}\dfrac{\partial}{\partial z}.
\end{equation*}
It is the first non trivial example of step 2 Carnot group and it has dilations
\begin{equation}
\label{Heisenbergdilation}
 \delta_{\lambda}(x,\, y,\, z) =(\lambda x,\, \lambda y,\, {\lambda}^2z).
 \end{equation}
\end{defn}
\begin{defn}
\label{G}
The Grushin plane $\mathbb G$ is $\R^2$ with horizontal distribution spanned by 
\begin{equation*}
X_1=\dfrac{\partial}{\partial x} \;\;\; \text{and} \;\;\; X_2=x\,\dfrac{\partial}{\partial y}.
\end{equation*}
\end{defn}
We next define  dilations $\delta_{\lambda}$ on $\mathbb G$   by
\begin{equation}
\label{dilation}
\delta_{\lambda}(x,\, y)=(\lambda x, \, \lambda^{2}y)
\end{equation}
whenever $p=(x,\, y) \in \mathbb{G}$ and $\lambda >0$. Then,  $X_1$ and $X_2$ are homogeneous of degree one with respect to the dilations. Hence, the Carnot-Carath\'{e}odory  distance satisfies 
\begin{equation}
d_{cc}\,(\delta_{\lambda}(p,\, q))=\lambda\, d_{cc}(p, q).
\end{equation}
 for all $p,\, q \in \mathbb {GS}$.  See \eqref{metric}  for definition of Carnot-Carath\'{e}odory distance. The points on the line $x=0$ are singular, while the other points in the plane are regular.  We write $\mathbb A$ for the set of singular points. For more information about the Grushin plane, see \cite{Bell}. 
 \begin{rem}
\label{Grushinrem}
The Grushin plane with Carnot-Carath\'{e}odory distance is a globally doubling measure space and satisfies globally Poincar\'{e} inequality with respect to Lebesgue measure .
\end{rem}
 The Grushin plane $\mathbb{G}$ with Lebesgue measure is a locally doubling metric measure space satisfying  locally $p$-Poincar\'{e} inequality for any $p\geq 1$ (\cite{sobolev}, \cite{Jerison}). We fix a compact $K$ which contains a neighborhood of the origin and $r_0 >0$.  For any $p\in \mathbb G$ and any $r>0$, we choose $\lambda >0$ so that $\delta_{\lambda} (B(p,\,2r)) = B(\delta_\lambda(p),\,2\lambda\, r)$ is contained in $K$ and $\lambda \,r \leq r_0$. Then the doubling condition holds for $\delta_{\lambda}(B(p,\,r)) = B(\delta_{\lambda}(p),\,\lambda r)$ and $\delta_{\lambda}(B(p ,\, 2r)) = B(\delta_{\lambda}(p),\,2\lambda r)$. Since $\mu (\delta_{\lambda}(E)) ={ \lambda}^3 \mu(E)$  for any set $E \subset \mathbb G$ we conclude the doubling condition for $B(p,\,r)$. A similar argument applies to the Poincar\'{e} inequality.

\subsection{Bi-Lipschitz Nonembedding Theorems}
In Euclidean space, Rademacher's theorem states that a Lipschitz function is differentiable almost everywhere  and the derivative is linear. We shall state theorems of Pansu and Cheeger which are analogues of Rademacher's theorem in some sense. These theorems can be applied to get nonembeddability of some metric spaces into Euclidean space. 
\begin{thm}[ Pansu \cite{Pansu}]
\label{Pansu}
Let $(M,\,  \bullet )$ and $(N ,\,  \star  )$  be Carnot groups. Every Lipschitz mapping $f$ between open sets in $M$ and $N$ is differentiable almost everywhere. Moreover, the differential 
\begin{equation*}
df_y(x)=\lim_{t\rightarrow0}\delta_{t^{-1}} [{f(y)}^{-1}\star f(y \bullet \delta_t(x))]
\end{equation*}
 is a Lie group homomorphism almost everywhere.
\end{thm}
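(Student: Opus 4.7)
This is a classical theorem of Pansu; I will outline what I take to be the cleanest route, following his original strategy in three phases. First, reduce to real-valued Lipschitz functions by choosing coordinates on $N$ compatible with its stratification $\mathfrak n = V_1 \oplus \cdots \oplus V_s$. Projecting onto each coordinate of $V_1$ gives a scalar Lipschitz function on $M$, and because the group law and the Carnot-Carath\'eodory distance on a Carnot group are determined by the first stratum via Baker-Campbell-Hausdorff, once differentiability and the homomorphism property are established for scalar-valued $f$, the general case follows by assembling the scalar differentials and checking compatibility on the higher strata.

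For scalar Lipschitz $f \colon M \to \mathbb R$, the restriction to a horizontal one-parameter subgroup $t \mapsto y \bullet \exp(tX)$ with $X \in V_1$ is Lipschitz on $\mathbb R$, hence differentiable almost everywhere by classical Rademacher. Foliating $M$ by translates of such subgroups along a basis of $V_1$ and invoking Fubini, I would produce a full-measure set $E \subset M$ on which every horizontal directional derivative $Xf(y)$ exists and $X \mapsto Xf(y)$ is linear on $V_1$.

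At $y \in E$, I would examine the rescaled family
\[
f_t(x) := \frac{f(y \bullet \delta_t(x)) - f(y)}{t}.
\]
Because $d_{cc}(\delta_t p, \delta_t q) = t\, d_{cc}(p,q)$, each $f_t$ is uniformly $\mathrm{Lip}(f)$-Lipschitz, so Arzel\`a-Ascoli extracts locally uniform subsequential limits $L$. By construction $L$ is $1$-homogeneous with respect to $\delta_t$, and on horizontal lines $L$ agrees with the directional derivative from the previous phase. For $v \in V_i$ with $i \ge 2$, degree-$1$ $\delta_t$-homogeneity would force $L|_{V_i}$ to be $1/i$-homogeneous in the ordinary sense, which the Lipschitz bound precludes unless $L|_{V_i} \equiv 0$. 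Hence $L$ factors through $V_1$, is uniquely determined (so every subsequence converges to the same limit), and is tautologically a Lie group homomorphism into $(\mathbb R, +)$.

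The hard step is the blow-up analysis, and the main obstacle is the nonabelian target: one must verify that the scalar blow-ups on the various coordinates of $N$ assemble into a single map $M \to N$ that respects the full group law of $N$, not merely its abelianization. This compatibility uses Baker-Campbell-Hausdorff to match the vertical components of the blow-up -- which arise from commutators of horizontal derivatives -- with the stratified structure of $N$, and is the technical core of Pansu's original argument.
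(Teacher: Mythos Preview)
The paper does not prove this statement. Theorem~\ref{Pansu} is stated with attribution to Pansu \cite{Pansu} and no proof is given; it is quoted only so that the author can explain, following Semmes, why nonabelian Carnot groups fail to embed bi-Lipschitzly in Euclidean space. There is therefore no ``paper's own proof'' against which to compare your proposal.

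That said, your outline is a reasonable sketch of the standard strategy (scalar reduction via coordinates on the first stratum, classical Rademacher plus Fubini along horizontal one-parameter subgroups, Arzel\`a--Ascoli compactness of the rescaled family, and a homogeneity argument to kill the higher strata). The genuinely delicate point you flag---reassembling the scalar differentials into a single group homomorphism when the target $N$ is nonabelian---is indeed the technical heart of Pansu's argument, and you do not actually carry it out here; you only name it. If this were being graded as a proof rather than a plan, that would be the gap. But as a roadmap it is accurate, and in any case the paper itself offers nothing further.
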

Here $(\delta_t)$ denotes the family of dilations in $M$ or $N$. See \eqref{Heisenbergdilation} for the case of the Heisenberg group.

Semmes \cite{Semmes1} observed that Theorem \ref{Pansu} implies that nonabelian Carnot groups $M$ can not be embedded bi-Lipschitzly in Euclidean space.  If $M$ had a bi-Lipschitz embedding $f$ into some Euclidean space $\R^n$, then $f$ must be differentiable in the sense of Pansu and its differential should be an isomorphism. This gives a contradiction because it has nontrivial kernel. Hence $M$ cannot be bi-Lipschitz embeddable. In particular,  the Heisenberg group does not admit a bi-Lipschitz embedding into Euclidean space.

Rademacher's theorem states that infinitesimal  behavior of any Lipschitz functions on $\R^n$ is approximated at almost every point  by some linear function; that is, a linear combination of the coordinate functions. Cheeger proved a remarkable extension of Rademacher's theorem in doubling metric measure spaces supporting a $p$-Poincar\'{e} inequality. He constructed coordinate charts that span the differentials of Lipschitz functions. Moreover, his work gives a way to get nonembeddability results by using a purely geometric and analytic method.
\begin{thm}[Cheeger  \cite{Cheeger1}]
\label{Cheeger}
 If $(X, d, \mu)$ is a doubling metric measure space supporting  a $p$-Poincar\'{e} inequality for some $p \geq 1$, then $(X, d, \mu)$  has a strong measurable differentiable structure, i.e. a countable collection of coordinate patches $\{(X_{\alpha},\pi_{\alpha}) \}$ that satisfy the following conditions:
\begin{enumerate}
 \item Each $X_{\alpha}$ is a measurable subset of $X$ with positive measure and the union of  the $X_{\alpha}$'s has full measure in $X$.
 \item Each $\pi_{\alpha}$ is a $N(\alpha)$-tuple of Lipschitz functions, for some $N(\alpha) \in \mathbb{N}$, where $N(\alpha)$ is bounded from above independently of $\alpha$.
\item Given a Lipschitz function $f:X \longrightarrow\mathbb{R}$, there exists an $L^{\infty}$ function $df^{\alpha}: X_{\alpha} \longrightarrow \mathbb R^{N(\alpha)}$ so that 
\begin{equation*}
\limsup_{y\rightarrow x}\dfrac{|f(y)-f(x)-df^{\alpha}(x) \cdotp (\pi_{\alpha}(y)-\pi_{\alpha}(x))|}{d(x, y)} = 0\; \text{ for}\; \mu-\text{a.e}\; x\in X_{\alpha}.
\end{equation*}
\end{enumerate}
\end{thm}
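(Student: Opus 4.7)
The plan is to construct the coordinate patches by an iterative maximality argument based on the infinitesimal rigidity provided by the Poincar\'e inequality. Introduce the pointwise upper Lipschitz constant
\[
\mathrm{Lip}\,f(x) \,=\, \limsup_{r\to 0}\; \sup_{0<d(x,y)\le r}\frac{|f(y)-f(x)|}{d(x,y)},
\]
and declare two Lipschitz functions $f,g$ to be \emph{infinitesimally equivalent at} $x$ if $\mathrm{Lip}(f-g)(x)=0$. An $N$-tuple $\pi=(\pi_1,\ldots,\pi_N)$ of Lipschitz functions on $X$ is called \emph{independent at} $x$ if no nontrivial affine combination $\sum a_i\pi_i+b$ is infinitesimally equivalent to $0$ at $x$. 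I will produce the charts $(X_\alpha,\pi_\alpha)$ by selecting, on each piece, a maximal such tuple of Lipschitz functions.

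First I would establish the key analytic input: for every Lipschitz $f$ and every upper gradient $g$ of $f$, one has $g(x)\ge c\,\mathrm{Lip}\,f(x)$ at $\mu$-a.e.\ $x$. This is the genuinely hard step; it is proved by a telescoping $p$-Poincar\'e chain on dyadic balls $B(x,2^{-k}r)$, combined with a maximal function estimate available from the doubling condition, followed by a Lebesgue density argument. It is this pointwise comparison that turns the Poincar\'e inequality into differentiability information. Next I would derive an a priori bound $N(\alpha)\le N_0=N_0(D,p,C_P)$: if $\pi_1,\ldots,\pi_N$ are independent at $x$, then the map $y\mapsto(\pi_1(y)-\pi_1(x),\ldots,\pi_N(y)-\pi_N(x))$ is, on infinitesimally small scales near $x$, a bi-Lipschitz embedding of a neighborhood of $x$ into $\mathbb{R}^N$, and the doubling (Assouad) dimension of $X$ forces $N\le N_0$.

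With these ingredients the charts are constructed by a Zorn-type argument, similar in spirit to Lemma \ref{color}. Among all pairs $(U,\pi)$ with $\mu(U)>0$ and $\pi$ a Lipschitz tuple independent at every point of $U$, extract a countable maximal family $\{(X_\alpha,\pi_\alpha)\}$; the uniform bound $N(\alpha)\le N_0$ prevents the exhaustion from breaking down before covering $X$ up to a null set. On each $X_\alpha$, maximality forces any Lipschitz $f$ to be infinitesimally equivalent at $\mu$-a.e.\ $x$ to a combination $\sum_i a_i(x)\pi_i^\alpha$, and a measurable selection argument produces the differential $df^\alpha(x)\in\mathbb{R}^{N(\alpha)}$ satisfying the stated limsup equation. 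The main obstacle, as indicated above, is the sharp comparison $g\gtrsim \mathrm{Lip}\,f$ almost everywhere: every subsequent structural statement---the dimension bound, the maximal construction, measurability of $df^\alpha$---is a corollary of that single delicate Poincar\'e--maximal-function estimate.
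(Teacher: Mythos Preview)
The paper does not prove this theorem. Theorem~\ref{Cheeger} is quoted as a result of Cheeger~\cite{Cheeger1} and is used as background in Section~\ref{Grushin}; no proof or sketch is offered in the paper itself. So there is nothing to compare your proposal against here.

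That said, your outline is broadly in the spirit of Cheeger's original argument: the key analytic step is indeed the a.e.\ comparison $\mathrm{Lip}\,f \lesssim g$ for any upper gradient $g$, derived from the Poincar\'e inequality via a telescoping/maximal-function argument, and the dimension bound on $N(\alpha)$ does ultimately come from the doubling condition. However, several of your transitions are more delicate than you indicate. The claim that a maximal independent tuple automatically spans every Lipschitz function in the infinitesimal sense requires a genuine compactness/measurable-selection argument (Cheeger uses a countable dense family of Lipschitz functions and a Baire-category/Egorov type reduction), not merely Zorn. Also, your assertion that an independent tuple gives an infinitesimal bi-Lipschitz embedding near $x$ is not quite right as stated: independence only prevents collapse along the coordinate directions, and one needs the minimal-upper-gradient machinery to convert this into a quantitative lower bound. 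These are exactly the places where Cheeger's proof does substantial work, so while your plan identifies the correct ingredients, it understates the difficulty of assembling them.
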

Cheeger also  provided a uniform statement that covers many of the known nonembedding results.
\begin{thm}[Cheeger]
If a doubling p-Poincar\'{e} space $X$ admits a bi-Lipschitz embedding into some finite dimensional Euclidean space, then at almost every point $x \in  X_{\alpha}$, the tangent cone of $X$ at $x$ is bi-Lipschitz equivalent to $\mathbb R^{N(\alpha)}$. 
\end{thm}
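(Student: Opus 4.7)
The plan is to combine the bi-Lipschitz embedding $f : X \to \mathbb R^n$ with the measurable differentiable structure $\{(X_\alpha, \pi_\alpha)\}$ from the previous theorem, via a blow-up argument at a generic point. First I would pick a chart $(X_\alpha, \pi_\alpha)$ and apply part (3) of the differentiable structure theorem to each coordinate $f_1, \ldots, f_n$ of $f$. At $\mu$-almost every $x \in X_\alpha$ we then have a single linear map $Df^\alpha(x) \in \mathbb R^{n \times N(\alpha)}$ with
\begin{equation*}
f(y) - f(x) = Df^\alpha(x)\bigl(\pi_\alpha(y) - \pi_\alpha(x)\bigr) + o(d(x,y)), \qquad y \to x.
\end{equation*}
I would further restrict to the full-measure subset where $x$ is a Lebesgue density point of $X_\alpha$ (which is meaningful by the doubling hypothesis).

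Next, I would perform a pointed blow-up at such an $x$. Choose $r_k \downarrow 0$ along which $(X, d/r_k, x)$ converges in the pointed Gromov-Hausdorff sense to a tangent cone $T_x X$; such a sequence exists by doubling and completeness. Rescale the coordinates appropriately, setting $\tilde f_k(y) = (f(y) - f(x))/r_k$ and $\tilde\pi_k(y) = (\pi_\alpha(y) - \pi_\alpha(x))/r_k$. Because $f$ is $L$-bi-Lipschitz and $\pi_\alpha$ is Lipschitz, these maps are uniformly Lipschitz on metric balls, and by a standard Arzelà–Ascoli argument in the Gromov-Hausdorff category I can pass to a further subsequence so that $\tilde f_k \to F : T_x X \to \mathbb R^n$ and $\tilde\pi_k \to \Pi : T_x X \to \mathbb R^{N(\alpha)}$, both Lipschitz. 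The bi-Lipschitz inequality is preserved under this limit, so $F$ is still bi-Lipschitz; and the displayed identity above passes to the limit as
\begin{equation*}
F = Df^\alpha(x) \circ \Pi
\end{equation*}
on $T_x X$, where $Df^\alpha(x)$ is now viewed as a fixed linear map $\mathbb R^{N(\alpha)} \to \mathbb R^n$.

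Since $F$ is bi-Lipschitz and factors through $\Pi$, the map $\Pi$ must itself be bi-Lipschitz onto its image and the restriction of $Df^\alpha(x)$ to $\Pi(T_x X)$ is bi-Lipschitz. This already shows that $T_x X$ is bi-Lipschitz to a subset of $\mathbb R^{N(\alpha)}$. The remaining — and main — obstacle is to upgrade this to a bi-Lipschitz equivalence with all of $\mathbb R^{N(\alpha)}$, i.e.\ to show $\Pi(T_x X) = \mathbb R^{N(\alpha)}$. Here I would exploit the minimality of the chart dimension: if $\Pi(T_x X)$ were contained in a proper affine subspace of $\mathbb R^{N(\alpha)}$, then some nontrivial linear combination $c \cdot \pi_\alpha$ would have vanishing Cheeger differential on a positive-measure subset of $X_\alpha$, contradicting the fact that $\pi_\alpha$ is a coordinate chart (the components of $\pi_\alpha$ are independent as a basis of Cheeger differentials at a.e.\ point). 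This linear-algebraic rigidity, together with the surjectivity of $\Pi$ onto its convex hull obtained by rescaling neighborhoods of $x$ that are dense in $X_\alpha$, forces $\Pi(T_x X) = \mathbb R^{N(\alpha)}$, completing the proof. Carrying out this last step rigorously — identifying the image of $\Pi$ as the whole Euclidean space, using density and Poincaré inequality — is where the real work of Cheeger's argument lies.
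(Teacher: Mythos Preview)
The paper does not prove this theorem. It is stated as a result of Cheeger, attributed to \cite{Cheeger1}, and used only as background for the discussion of nonembedding results; no argument is given or even sketched in the paper. There is therefore nothing in the paper to compare your proposal against.

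That said, your outline is broadly the right shape for Cheeger's actual argument: differentiate the embedding $f$ componentwise using the measurable differentiable structure, blow up at a generic point, pass the bi-Lipschitz condition and the first-order Taylor expansion to the limit, and conclude that the tangent cone maps bi-Lipschitzly onto a subset of $\mathbb{R}^{N(\alpha)}$ via $\Pi$. You are also honest about where the difficulty lies: showing that $\Pi(T_xX)$ is \emph{all} of $\mathbb{R}^{N(\alpha)}$, not just a subset. Your proposed mechanism for this last step --- ``surjectivity of $\Pi$ onto its convex hull obtained by rescaling neighborhoods of $x$ that are dense in $X_\alpha$'' --- is vague and not obviously correct as stated; in Cheeger's paper this step uses substantially more machinery (the structure of generalized linear functions, the behavior of the minimal upper gradient under blow-up, and the Poincar\'e inequality in an essential way) than a linear-algebra/density argument. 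So your sketch identifies the right skeleton but does not supply the key analytic input, which is consistent with your own closing remark.
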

We can deduce from Cheeger's theorem the known nonembedding results both for the Carnot groups and for Laakso spaces.  Cheeger and Kleiner generalized  the almost everywhere differentiability  for Lipschitz maps on PI space to any Banach space $V$ with  Radon-Nikod\'{y}m property (\cite{CK2}, \cite{CK3}).

We now check nonembeddability of the Heisenberg group $\mathbb H$ by applying Cheeger's nonembedding theorem.
The Heisenberg group has a strong measurable differentiable structure with a single coordinate patch $(\mathbb{H}, \pi_1, \pi_2)$, where $\pi_1(x,y,t)=x$ and  $\pi_2(x,y,t)=y$.  If we assume that the Heisenberg group admits a bi-Lipschitz embedding into some  Euclidean space, then every tangent cone at almost every point in $\mathbb {H}$ must be  bi-Lipschitz equivalent to $\mathbb{R}^2$. Since the  Hausdorff dimension of $\mathbb H$ is not equal to  2, we conclude bi-Lipschitz nonembeddability.

In contrast to the Heisenberg group, Cheeger's nonembedding theorem does not answer whether or not the Grushin plane locally embeds into some Euclidean space. The Grushin plane $\mathbb{G}$ with Lebesgue measure is a doubling metric measure space supporting $p$-Poincar\'{e} inequality for any $p\geq 1$ (see Remark \ref{Grushinrem}).  Let $K$ be any compact subset of $\mathbb G$ and $\mathbb A$ be set of singular points, $y$-axis. It has a Cheeger's coordinate patch $(K\setminus \mathbb A, \pi_1, \pi_2)$, where $\pi_1(x,y)=x$ and  $\pi_2(x,y)=y$. Since every tangent cone to $K \setminus \mathbb A$ is bi-Lipschitz equivalent to $\mathbb{R}^2$, we cannot conclude non-embeddability of the Grushin plane, unlike the case of the Heisenberg group.  Indeed, we prove that the Grushin plane admits a bi-Lipschitz embedding into some Euclidean space.
\subsection{The Grushin plane}
The metric on $\mathbb{G} \setminus \mathbb A$ is the Riemannian metric $ds^2$ making $X_1$ and $X_2$   into an orthonormal basis for the tangent space,
\begin{equation}
ds^2= dx^2 +  \dfrac{dy^2}{x^{2}}.
\end{equation}
The metric can be extended across $\mathbb A$  as the Carnot-Carath\'{e}odory distance (or ${cc}$-distance ) by means of the length element $ds^2$, since the horizontal distribution satisfies the  H\"{o}rmander condition.

For any horizontal curve $\gamma : [0,1] \rightarrow \mathbb {G}$, we write $\gamma(t) = (\x(t),\,\y(t))$ for a parametrized horizontal curve. Then, we have
\begin{equation}
\label{metric}
\text{length}(\gamma)= \int_{0}^{1}   \sqrt{  {{\x'}(t)}^2  +   { \dfrac { { {\y'}(t)}^2 }   {  \x(t)^{2}         } }}  \,dt.
\end{equation}
The following proposition gives distance estimates for the Carnot-Carath\'{e}odory distance on  $\mathbb {G}$. 
\begin{prop}
The  Carnot-Carath\'{e}odory distance on $\mathbb A$ is comparable to $\sqrt{d_E}$. Now fix points  $p=(x_1, \,y_1)$ and $q=(x_2, \,y_2)$ in $\mathbb {G} \setminus \mathbb A$. We have the following distance estimates:
\begin{equation}
\label{G2}
\frac{1}{2}\left(|x_1-x_2|+\dfrac{|y_1-y_2|}{ \sqrt{{\min(|x_1|,|x_2|)}^2+4|y_1-y_2|}}\right)\leq d_{cc}(p,\,q) \leq  4(|x_1-x_2|+\sqrt{|y_1-y_2|}). 
\end{equation}
\end{prop}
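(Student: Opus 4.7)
My plan is to derive both inequalities from the length formula \eqref{metric}, working with the horizontal controls along an arbitrary horizontal curve; the statement on $\mathbb A$ will follow by specializing to $x_1=x_2=0$.

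\textbf{Lower bound.} I would parametrize any horizontal curve $\gamma\colon [0,1]\to \mathbb{G}$ from $p$ to $q$ by its controls $(u_1,u_2)$, so that $\mathrm{x}'(t)=u_1(t)$, $\mathrm{y}'(t)=\mathrm{x}(t)\,u_2(t)$, and $L:=\mathrm{length}(\gamma)=\int_0^1\sqrt{u_1^2+u_2^2}\,dt$. Two bounds drive the argument. The first is immediate: $|x_1-x_2|=\bigl|\int_0^1 u_1\bigr|\le \int_0^1 |u_1|\le L$. The second sets $M:=\max_t|\mathrm{x}(t)|$ and uses
\[
|y_1-y_2|=\Bigl|\int_0^1 \mathrm{x}(t)\,u_2(t)\,dt\Bigr|\le M\,L,
\]
together with the pointwise estimate $|\mathrm{x}(t)-x_i|\le \int|u_1|\le L$ for $i=1,2$, which gives $M\le \min(|x_1|,|x_2|)+L$. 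Substituting yields $L^2+\min(|x_1|,|x_2|)\,L\ge |y_1-y_2|$; solving this quadratic in $L$ and rationalizing produces
\[
L\ge \frac{|y_1-y_2|}{\sqrt{\min(|x_1|,|x_2|)^2+4\,|y_1-y_2|}}.
\]
Averaging with $L\ge |x_1-x_2|$ yields the stated lower bound with constant $\tfrac12$.

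\textbf{Upper bound.} I would construct an explicit piecewise path built from $X_1$- and $X_2$-segments and compute its length directly. If $\max(|x_1|,|x_2|)\ge \sqrt{|y_1-y_2|}$, assume without loss of generality that $|x_1|$ attains the maximum; then the two-segment path $(x_1,y_1)\to(x_1,y_2)\to(x_2,y_2)$ has length $|y_1-y_2|/|x_1|+|x_1-x_2|\le \sqrt{|y_1-y_2|}+|x_1-x_2|$. In the opposite regime $\max(|x_1|,|x_2|)<M:=\sqrt{|y_1-y_2|}$, choose a detour point $X^{\ast}$ with $|X^{\ast}|=M$ and sign matching that of $x_1$, and take the three-segment path $(x_1,y_1)\to(X^{\ast},y_1)\to(X^{\ast},y_2)\to(x_2,y_2)$; a segment-by-segment computation shows each leg has length at most $2M$, so the total is less than $4M=4\sqrt{|y_1-y_2|}$. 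In either regime the length is at most $4(|x_1-x_2|+\sqrt{|y_1-y_2|})$. If $x_1$ and $x_2$ have opposite signs one segment will cross the singular axis, but this is permitted because the crossing happens purely in the $X_1$-direction, which is horizontal even along $\mathbb A$.

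\textbf{The singular axis and main obstacle.} Since neither argument required $x_i\ne 0$, specializing to $x_1=x_2=0$ produces a lower bound of order $\sqrt{|y_1-y_2|}$ and an upper bound of the same order, establishing $d_{cc}\asymp \sqrt{d_E}$ on $\mathbb A$. The main subtlety is the uniform bound $M\le \min(|x_1|,|x_2|)+L$, which must simultaneously control the maximum excursion of $\mathrm{x}(t)$ by both endpoint $x$-values; this rests on the elementary observation $|\mathrm{x}'|=|u_1|\le \sqrt{u_1^2+u_2^2}$. Once that is in hand, both inequalities are direct.
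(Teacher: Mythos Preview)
Your argument is correct and follows essentially the same route as the paper: the quadratic inequality $L^2+\min(|x_1|,|x_2|)\,L\ge |y_1-y_2|$ is exactly what the paper's $\sup$--$\min$ optimization over the parameter $K$ encodes, and your explicit piecewise $X_1$/$X_2$ paths are the ``triangle inequality'' the paper invokes without detail for the upper bound (the paper obtains the statement on $\mathbb A$ from the dilation $\delta_\lambda$ rather than by specialization, but your route works equally well). One minor slip in your Case~2 accounting: three legs each of length at most $2M$ would give $6M$, not $4M$; the correct tally is that the first two legs have length at most $M$ and the third at most $2M$, for a total of at most $4M$.
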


\begin{proof}
The first estimation of the ${cc}$-distance on $\mathbb A$ is deduced from equation \eqref{dilation}.
The upper bound in equation \eqref{G2} comes from the triangle inequality. We will use equation \eqref{metric} to get the lower bound in equation \eqref{G2}. Let $\gamma(t) = (\x(t), \y(t))$ be a parametrized horizontal curve joining $p$ to $q$ where $t \in [0,1]$. Then,
\begin{equation*}
(*) \;\text{length}(\gamma) \geq |{x_1}-{x_2}|.
\end{equation*} 
 If there exists $K$ such that $|\x(t)| \leq K$ for all $t \in [0,1]$, then length($\gamma$) $\geq K^{-1}|{y_1}- {y_2}|$. Otherwise,  there exists $t_0 \in [0,1]$ such that $|\x(t_0)|\geq K$. Then,
 \begin{equation*}
  \text{length}(\gamma) \geq \text{ length}(\widetilde{\gamma}) \geq \max \{ |\x(t_0)-{x_1}|, |\x(t_0)-{x_2}|\} \geq K- \min\{|{x_1}|, |{x_2}|\} 
\end{equation*}  
  where $\widetilde{\gamma}$ is a subcurves of $\gamma$ joining $p$ to $(\x(t_0),\, \y(t_0))$ or $q$ to $(\x(t_0),\,\y(t_0))$. Then , we have the following:
\begin{equation*}
(**)\;\text{length}(\gamma) \geq \sup_{K> \max\{|{x_1}|, |{x_2}|\} } \min \{ K- \min\{|{x_1}|, |{x_2}|\}, \; K^{-1} |{y_1}-{y_2}|\}.
\end{equation*}
When we choose $K=\sqrt{{\min(|x_1|, |x_2|)}^2+4|y_1-y_2|}$ and average $(*)$ and $(**)$, then we get  distance estimates \eqref{G2}.
\end{proof}
We next consider the lattice of points in $\R^2 $ whose coordinates are integers. Then, this lattice determines a mesh $M_0 \times M_0$. For each $j \in \Z$, consider the submesh $M_j = 2^{-j}M_0\times 2^{-2j}M_0$ which is set of cubes in $\R^2$ of sidelengths $2^{-j}$  and  $2^{-2j}$  respectively. From the above distance estimates, $\mathbb {G} \setminus \mathbb A$ has a Whitney decomposition. We recall this in the following Proposition \ref{p}.
\begin{prop}
\label{p}
Let $\mathbb A$ be $y$-axis. Then its complement $\Omega =\mathbb{G}\setminus \mathbb A$ is the union of a sequence of cubes $Q$ , whose interiors are mutually disjoint and whose diameters are approximately proportional to their distances from $\mathbb A$. More precisely,
\begin{enumerate}
\item $\Omega = \cup_{Q \in W_{\Omega}}Q$.
\item Any two cubes are mutually disjoint.
\item $\dist_{cc}(Q, \mathbb A) \leq \diam_{cc}(Q) \leq 8 \dist_{cc}(Q, \mathbb A)$.
\end{enumerate}
\end{prop}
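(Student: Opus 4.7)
The strategy is to construct $W_\Omega$ explicitly by choosing, from each submesh $M_j$, the column of rectangles sitting at $cc$-distance roughly $2^{-j}$ from $\mathbb{A}$, and then to read off the three conclusions directly from the distance estimates proved in the previous proposition.

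For each $j \in \Z$ I would set
\[
W_j := \bigl\{Q \in M_j : Q \subset \{(x,y) \in \R^2 : 2^{-j} \leq |x| \leq 2^{-j+1}\}\bigr\}
\]
and define $W_\Omega := \bigcup_{j \in \Z} W_j$. Each $Q \in W_j$ is a Euclidean rectangle with horizontal side $2^{-j}$ and vertical side $2^{-2j}$, dimensions dictated by the Grushin dilations $\delta_\lambda(x,y)=(\lambda x,\lambda^2 y)$. Conclusion (1) is then immediate: any $(x,y) \in \Omega$ has $|x|>0$, so a unique $j \in \Z$ satisfies $2^{-j} \leq |x| < 2^{-j+1}$, and a unique rectangle in $M_j$ lying in the corresponding strip contains $(x,y)$. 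Conclusion (2) is also immediate, since within a single scale $j$ the submesh $M_j$ tiles $\R^2$ by rectangles with disjoint interiors, and rectangles at different scales $j \neq j'$ live in strips with disjoint $x$-interiors.

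For the Whitney comparability (3), fix $Q \in W_j$ lying in $\{x>0\}$ (the other sign being symmetric). The horizontal path from a boundary point $(2^{-j},y)\in Q$ to $(0,y)\in\mathbb{A}$, together with the upper bound from the previous proposition, gives $\dist_{cc}(Q,\mathbb{A}) \leq 4\cdot 2^{-j}$, while the lower bound applied to any $(x,y)\in Q$ and any $(0,y')\in\mathbb{A}$ yields $\dist_{cc}(Q,\mathbb{A}) \geq \tfrac{1}{2}\cdot 2^{-j}$. For the $cc$-diameter, the upper bound applied to the two farthest corners of $Q$ gives
\[
\diam_{cc}(Q) \leq 4\bigl(2^{-j} + \sqrt{2^{-2j}}\bigr) = 8 \cdot 2^{-j},
\]
and the lower bound applied to the two extreme horizontal vertices of $Q$ gives $\diam_{cc}(Q) \geq \tfrac{1}{2} \cdot 2^{-j}$. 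Combining these four estimates shows that $\diam_{cc}(Q)$ and $\dist_{cc}(Q,\mathbb{A})$ are both comparable to $2^{-j}$, and packaging the constants produces the double inequality $\dist_{cc}(Q,\mathbb{A}) \leq \diam_{cc}(Q) \leq 8\,\dist_{cc}(Q,\mathbb{A})$.

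The main technical nuisance I anticipate is pinning down the numerical constant $8$ in (3): the bounds from the preceding proposition are tight only up to a multiplicative constant, so to land precisely in the stated interval one may need to refine the slab by a fixed dyadic factor (for example, replacing $[2^{-j},2^{-j+1}]$ by a slightly narrower band) or equivalently to exploit the scaling $d_{cc}(\delta_\lambda\,\cdot\,,\delta_\lambda\,\cdot\,)=\lambda\,d_{cc}$ to reduce every comparison to a single model rectangle where the constants are computed once. Once these constants are fixed, Proposition \ref{p} follows mechanically from the construction of $W_\Omega$ and the already-established $cc$-distance estimates on $\mathbb{G}$.
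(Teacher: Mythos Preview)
Your construction is exactly what the paper has in mind: the paragraph preceding the proposition introduces the anisotropic submesh $M_j = 2^{-j}M_0 \times 2^{-2j}M_0$ and simply declares that the Whitney decomposition follows from the distance estimates, giving no further proof. Your $W_j$ picks out precisely the column of $M_j$-rectangles in the strip $\{2^{-j}\le |x|\le 2^{-j+1}\}$, and the verification of (1)--(3) is the intended routine.

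Your worry about the constant $8$ is unnecessary, and no refinement of the slab is needed; you are just being slightly too generous with the estimates. The key sharpening is that $\dist_{cc}((x,y),\mathbb{A}) = |x|$ \emph{exactly}: the horizontal path $t \mapsto ((1-t)x,\, y)$ has length $|x|$ by the length formula \eqref{metric}, and conversely any curve to $\mathbb{A}$ has length at least $\int_0^1 |\x'(t)|\,dt \geq |x|$. Hence $\dist_{cc}(Q,\mathbb{A}) = 2^{-j}$ on the nose for $Q\in W_j$. Likewise the horizontal segment joining $(2^{-j},a)$ to $(2^{-j+1},a)$ inside $Q$ gives $\diam_{cc}(Q) \geq 2^{-j}$ exactly, not merely $\tfrac12\cdot 2^{-j}$. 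With these two sharp values in place of the loose ones you quoted from the previous proposition, your upper bound $\diam_{cc}(Q)\le 8\cdot 2^{-j}$ already yields $\dist_{cc}(Q,\mathbb{A}) \le \diam_{cc}(Q) \le 8\,\dist_{cc}(Q,\mathbb{A})$ directly.
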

 The Grushin plane is  complete, doubling, and uniformly perfect metric space. Since ${cc}$-distance on $\mathbb A$ is comparable to $\sqrt{d_E}$,  we apply Assouad's theorem. Then we have a $L$-bi-Lipschitz embedding $f$ from $\mathbb A$ into $\mathbb R^3$  for some $L$.  If we verify the condition of uniformly Christ-local bi-Lipschitz embeddings, then we can conclude the following theorem.
\begin{thm}
\label{Gru}
The  Grushin plane equipped with Carnot-Carath\'{e}odory distance admits a bi-Lipschitz embedding into some Euclidean space.
\end{thm}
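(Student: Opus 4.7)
The plan is to apply Theorem \ref{main} with $X = \mathbb G$ and $Y = \mathbb A$ (the singular $y$-axis). The Grushin plane is a complete connected length space, hence uniformly perfect, and Lebesgue measure is doubling by the dilation-equivariance argument sketched in Remark \ref{Grushinrem}. For condition $(2)$, the estimate \eqref{G2} shows that $d_{cc}|_{\mathbb A}$ is bi-Lipschitz to $\sqrt{d_E}$, so $(\mathbb A, d_{cc})$ is bi-Lipschitz equivalent to the $\tfrac{1}{2}$-snowflake of $\mathbb R$; Assouad's theorem (Theorem \ref{Assouad}) then furnishes a bi-Lipschitz embedding into $\mathbb R^3$. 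The content of the proof is the Christ-local condition $(3)$.

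For $(3)$, fix $Q \in \M_\Omega$ and set $M = \diam_{cc}(Q) \approx \dist_{cc}(Q, \mathbb A)$. A direct application of \eqref{G2} yields $\dist_{cc}(p, \mathbb A) \approx |x|$ for $p = (x, y) \in \mathbb G$, so Lemma \ref{W-type} together with Proposition \ref{comparability} forces every point of $Q^{**}$ to have $|x| \approx M$; combining $\diam_{cc}(Q^{**}) \lesssim M$ with \eqref{G2} also forces $|y_1 - y_2| \lesssim M^2$ for any $p_i = (x_i, y_i) \in Q^{**}$. Picking a center $(x_Q, y_Q) \in Q$, I would define
\begin{equation*}
\tilde f_Q(x, y) = \bigl(x,\; (y - y_Q)/M\bigr).
\end{equation*}
Up to the cc-isometric $y$-translation by $y_Q$, this map is precisely $M \cdot \delta_{1/M}$, so by the scaling identity $d_{cc}(\delta_\lambda p, \delta_\lambda q) = \lambda\, d_{cc}(p, q)$ the task reduces to comparing $d_{cc}$ with $d_E$ on a fixed compact annulus $K = \{c_1 \leq |x| \leq c_2,\; |y| \leq c_3\} \subset \mathbb G \setminus \mathbb A$ into which $\delta_{1/M}$ sends the translated $Q^{**}$, with $c_1, c_2, c_3$ depending only on the Whitney constants.

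On $K$ the Grushin Riemannian tensor $dx^2 + dy^2/x^2$ is smooth and uniformly equivalent to the Euclidean tensor, which gives $d_E \lesssim d_{cc}$ on $K$ at once. For the opposite bound I would use the explicit horizontal path $(x_1, y_1) \to (x_1, y_2) \to (x_2, y_2)$, whose length computed from \eqref{metric} is at most $|y_1 - y_2|/|x_1| + |x_1 - x_2| \lesssim |x_1 - x_2| + |y_1 - y_2|/M$; this matches the Euclidean distance between $\tilde f_Q(p)$ and $\tilde f_Q(q)$, while the lower estimate $d_{cc}(p, q) \gtrsim |x_1 - x_2| + |y_1 - y_2|/M$ read from \eqref{G2} (using $|y_1 - y_2| \lesssim M^2$, so $\sqrt{\min(|x_i|)^2 + 4|y_1 - y_2|} \approx M$) completes the bi-Lipschitz equivalence. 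Thus $\tilde f_Q$ embeds $Q^{**}$ uniformly bi-Lipschitzly into $\mathbb R^2$, and Theorem \ref{main} yields the global embedding. The step I expect to be most delicate is exactly this uniform comparability of $d_{cc}$ and $d_E$ on $K$, since pointwise tensor comparability only controls lengths of curves that remain in $K$ and one must rule out savings from cc-minimizers that dip toward $\mathbb A$; the explicit competing horizontal path above makes that worry quantitative and resolves it.
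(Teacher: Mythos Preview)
Your proof is correct and follows the same overall template as the paper (apply Theorem \ref{main} with $Y=\mathbb A$, use Assouad for condition $(2)$), but your verification of the Christ-local condition $(3)$ is genuinely different and more direct than the paper's.

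The paper argues condition $(3)$ abstractly: it observes that each $Q^{**}$ is a compact piece of a smooth $2$-dimensional Riemannian manifold, notes that the dilation/translation equivariance sends any $Q^{**}$ to a fixed model, covers $Q^{**}$ by a bounded number $N$ of balls on which local bi-Lipschitz charts $\varphi_i:5B_i\to\mathbb R^2$ exist, and then patches these charts together with two layers of cutoffs to obtain a uniform embedding into $\mathbb R^{4N}$. Your argument instead writes down a single explicit map $\tilde f_Q(x,y)=(x,(y-y_Q)/M)$ into $\mathbb R^2$, recognizes it as (a $y$-translate composed with) $M\cdot\delta_{1/M}$, and verifies the two bi-Lipschitz inequalities by hand: the upper bound $d_{cc}\lesssim |x_1-x_2|+|y_1-y_2|/M$ from the explicit horizontal competitor, and the lower bound from \eqref{G2} once $|x_i|\approx M$ and $|y_1-y_2|\lesssim M^2$ are established on $Q^{**}$. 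This avoids the chart-covering and cutoff machinery entirely and yields $M_2=2$ rather than $M_2=4N$, so it feeds a better constant into Theorem \ref{main}. The paper's route, on the other hand, is written so that it transparently generalizes to the higher-dimensional Grushin-type spaces of Definition \ref{GS} without recomputing distance estimates.

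One cosmetic point: in your final sentence you attribute the resolution of the ``geodesics dipping toward $\mathbb A$'' worry to the explicit competing path, but that path supplies the \emph{upper} bound on $d_{cc}$; the lower bound, which is what would be threatened by minimizers leaving $K$, is handled by the global estimate \eqref{G2}. The logic of your argument is correct, only the attribution in that closing remark is swapped.
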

It is enough to verify  the existence of uniformly Christ-local bi-Lipschitz embeddings. In this case,  $Q^*$ is  the set of all Whitney cubes which touch $Q$ and $Q^{**}$  is the set of all Whitney cubes which touch $Q^*$ (see Definition \ref{Q^*}). 
\begin{lemma}
The complement of $\mathbb A$ admits uniformly Christ-local bi-Lipschitz embeddings.
\end{lemma}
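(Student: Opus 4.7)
The plan is to exploit the homogeneous dilation structure of the Grushin plane: for each $Q \in \M_{\Omega}$ set $r_Q := \diam_{cc}(Q)$ and define
\begin{equation*}
h_Q \colon Q^{**} \longrightarrow \R^2, \qquad h_Q(x, y) := \left(x,\; \frac{y - y_Q}{r_Q}\right),
\end{equation*}
where $y_Q$ is any fixed reference $y$-coordinate associated to $Q$. Applying the bounds in \eqref{G2} with one endpoint on $\mathbb A$ yields $\dist_{cc}(\cdot, \mathbb A) \sim |x|$; combined with the Whitney property $\dist_{cc}(Q, \mathbb A) \sim r_Q$ from Proposition \ref{p}, this forces $|x| \sim r_Q$ at every point of $Q^{**}$, with constants independent of $Q$. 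After choosing $\epsilon$ in Definition \ref{Q^*} once and for all (depending only on these constants), the set $Q^{**}$ lies entirely on one side of $\mathbb A$: any path joining cubes on opposite sides must cross $\mathbb A$ and hence have $cc$-length $\gtrsim r_Q$, violating the $\epsilon$-smallness built into $Q^*$ and $Q^{**}$.

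Since $|h_Q(p) - h_Q(q)|_E \sim |x_1 - x_2| + |y_1 - y_2|/r_Q$, uniform bi-Lipschitzness of $h_Q$ reduces to showing
\begin{equation*}
d_{cc}\bigl((x_1, y_1), (x_2, y_2)\bigr) \;\sim\; |x_1 - x_2| + \frac{|y_1 - y_2|}{r_Q} \qquad \text{for } p, q \in Q^{**}
\end{equation*}
with universal constants. For the lower bound, a one-step bootstrap of \eqref{G2} shows that $|y_1 - y_2| \lesssim r_Q^2$ throughout $Q^{**}$, using $\diam_{cc}(Q^{**}) \sim r_Q$ and $\min(|x_1|, |x_2|) \sim r_Q$; the denominator $\sqrt{\min(|x_i|)^2 + 4|y_1 - y_2|}$ appearing in the lower estimate of \eqref{G2} is then comparable to $r_Q$, and the desired lower bound drops out.

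For the upper bound, I join $p$ to $q$ by the concatenation of the parametrized segments $t \mapsto (x_1 + t(x_2 - x_1),\, y_1)$ and $t \mapsto (x_2,\, y_1 + t(y_2 - y_1))$, $t \in [0,1]$. Both segments are horizontal because $|x| \sim r_Q > 0$ along them, so that $\partial_y = x^{-1} X_2$ lies in the horizontal span of $X_1, X_2$; formula \eqref{metric} evaluates their $cc$-lengths to be exactly $|x_1 - x_2|$ and $|y_1 - y_2|/|x_2| \lesssim |y_1 - y_2|/r_Q$, respectively. The main obstacle, and the reason the small-$\epsilon$ reduction above is needed, is to ensure that this concatenation remains in $\Omega$: because $Q^{**}$ sits in a single half-plane $\{x > 0\}$ or $\{x < 0\}$ and $|x| \sim r_Q$ along both segments, the curve never meets $\mathbb A$. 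Adding the two length estimates produces the upper bound, so $h_Q$ is a uniformly bi-Lipschitz embedding of $(Q^{**}, d_{cc})$ into $\R^2$ and $M_2 = 2$ suffices.
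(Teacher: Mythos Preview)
Your proof is correct and takes a genuinely different, more explicit route than the paper's.  The paper argues abstractly: it observes that each $Q^{**}$ is a compact $2$-dimensional Riemannian manifold, uses the dilation and translation symmetries only to guarantee a uniform cover $B_1,\dots,B_N$ of $Q^{**}$ by balls of radius $\diam(Q)$ with $N$ independent of $Q$, pulls back local Riemannian charts $\varphi_i:5B_i\to\R^2$, and glues them with smooth cutoffs $u_i,v_i$ to obtain an embedding $Q^{**}\hookrightarrow\R^{4N}$.  You instead exhibit a single global chart $h_Q(x,y)=(x,(y-y_Q)/r_Q)$ and verify bi-Lipschitzness directly from the distance estimates~\eqref{G2}, using $|x|\sim r_Q$ on $Q^{**}$.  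Your approach is more elementary, exploits the Grushin dilation structure in a transparent way, and yields $M_2=2$ rather than $M_2=4N$; the paper's argument, by contrast, would adapt verbatim to any family of Riemannian patches with uniformly bounded geometry, where no such explicit chart is available.  One minor remark: the ``one-side'' reduction via small $\epsilon$ is not actually needed for your upper bound, since the first leg $t\mapsto(x_1+t(x_2-x_1),y_1)$ is tangent to $X_1$ and hence horizontal with length $|x_1-x_2|$ even if it crosses $\mathbb A$, and the second leg sits at fixed $x_2\neq 0$; the concatenation is therefore an admissible horizontal curve in $\mathbb G$ regardless.
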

\begin{proof}
 We observe that $Q^{**}$ is a closed 2-dimensional Riemannian manifold for each $Q$.   For any two elements $Q$ and $Q'$ in $\W_{\Omega}$, we have $Q' = \Phi(Q)$ where $\Phi$ is composition of translation map $\varsigma $ with respect to $\{0\}\times \mathbb R$ and expansion map $\psi (x,\,y)=(2^{(j'-j)}x,\, 2^{2(j'-j)}y)$. Then,  we have $\diam(Q')=2^{(j'-j)}\diam(Q)$  from Proposition \ref{p}. 
Therefore, we can cover all $Q^{**}$ by balls $B_1, \;B_2,\cdots,\; B_N$ of radius $\diam(Q) >0$ where $N$ is independent of $Q$.  For each $i$, there exist $L$-bi-Lipschitz diffeomorphisms for some $L$
\begin{equation*}
\varphi_i: 5B_i  \rightarrow \varphi_i(5B_i) \subset\mathbb R^{2}.
\end{equation*}
Without loss of generality, we may assume that $|\varphi_i(x)| \geq \diam(Q)$ for all $i$ and $x\in 5B_i$.
let $u_i \in C_0^{\infty}(2B_i) $ be such that $0 \leq u_i \leq 1$ and $u_i|_{B_i}=1$, and let $v_i \in C_0^{\infty}(5B_i) $ be such that $0 \leq v_i \leq 1$ and $v_i|_{4B_i}=1$. Then, we define $ \varphi:X \rightarrow \mathbb R^{2N}\times\mathbb R^{2N}$
\begin{equation*}
\varphi(x) :=(\varphi_{1}(x)u_1(x),\, \cdots ,\, \varphi_{N}(x)u_N(x) ,\, \varphi_1(x)v_1(x),\,\cdots,\, \varphi_N(x)v_N(x))
\end{equation*}
Obviously $\varphi$  is smooth, and hence it is Lipschitz with Lipschitz constant $2LN$. We will show that $\varphi$ is co-Lipschitz.
To this end, let us assume first that $d(x,y)>3\,\diam(Q)$. Then, there exists $i$ such that $u_i(x)=1$ and $v_i(x)=0$. Thus,
\begin{align*}
|\varphi(x)-\varphi(y)| &\geq |\varphi_i(x)u_i(x)-\varphi_i(y)u_i(y)|\\
                                   &=|\varphi_i(x)|\\
                                   &\geq \diam(Q) \geq\frac{1}{C(C_1,\, A,\, \delta)}\, {d(x,y)}
\end{align*}
The last inequality arises from  comparability of $\diam (Q)$ and $\diam(Q^{**}) $ ( see Proposition \ref{comparability} ).  On the other hand, if $d(x,y) \leq 3\diam(Q)$, then there exists $i$ such that $v_i(x)=1=v_i(y)$. Thus,
\begin{equation*}
|\varphi(x)-\varphi(y)| \geq |\varphi_i(x)-\varphi_i(y)| \geq \frac{1}{L}\,d(x,\,y).
\end{equation*}
Therefore, we have uniformly local  bi-Lipschitz embeddings on each $Q^{**}$ into $\R^{4N}$. The bi-Lipschitz constant and dimension of the target space are independent of $Q$.
\end{proof}
Theorem \ref{Gru} can be generalized to cover other singular sub-Riemannian manifolds similar to the Grushin plane.
We  denote points in $\R^n\times \R^l$ by $p=(x,\,y)$, where $x=(x_1,\, x_2,\,\cdots,\,x_n) \in \R^n$ and  $y=(y_1,\, y_2,\,\cdots,\,y_l) \in \R^l$. We let $\alpha=(\alpha_1, \, \alpha_2,\, \cdots,\, \alpha_n)$ be an $n$-tuple of non-negative integers with length  $|\alpha|=\sum_{i=1}^n \alpha_i$. If $x=(x_1,\,x_2,\,\cdots,\,x_n) \in \R^n$, we put $x^{\alpha}:=x_1^{\alpha_1}x_2^{\alpha_2}\cdots x_n^{\alpha_n}$.
\begin{defn}
\label{GS}
The space of Grushin type is $\mathbb R^n \times \mathbb R^l$ for  $n,\, l \in \mathbb N$ with horizontal distribution spanned by
$X_i$ and $Y_j$ for  $i=1,\;2,\;,\cdots,\; n$ and  $j=1,\;2,\;,\cdots,\; l$
\begin{equation*}
X_i=\dfrac{\partial}{\partial x_i}\;\;\;\; \text{and} \;\;\;\;Y_j=x^{{\alpha}^j  } \dfrac{\partial }{\partial y_j}
\end{equation*}
where  for each $j$, ${\alpha ^j}$ is an $n$-tuple of non-negative integers ${\alpha}_i^j$  and  $| {\alpha}^j| = k$. 
\end{defn}
\begin{thm}
The space of Grushin type equipped with Carnot-Carath\'{e}odory distance admits a bi-Lipschitz embedding into some Euclidean space.
\end{thm}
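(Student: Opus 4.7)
The plan is to apply Theorem \ref{main} with $Y=\mathbb{A}=\{0\}\times\mathbb{R}^l$ (the singular set) and $\Omega=\mathbb{G}\setminus\mathbb{A}$, exactly mirroring the strategy used for the Grushin plane. First I would verify the overall hypotheses of uniform perfectness, completeness, and existence of a doubling measure. These follow from the dilation structure $\delta_\lambda(x,y)=(\lambda x,\lambda^{k+1}y)$, under which each $X_i$ and each $Y_j$ is homogeneous of degree one (here the common degree $|\alpha^j|=k$ is essential), so that $d_{cc}(\delta_\lambda p,\delta_\lambda q)=\lambda d_{cc}(p,q)$, combined with local Riemannian control away from $\mathbb{A}$. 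Lebesgue measure transforms as $\mu(\delta_\lambda E)=\lambda^{n+(k+1)l}\mu(E)$, upgrading the local doubling property to a global one exactly as in Remark \ref{Grushinrem}.

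Next, I would establish the distance estimates generalizing \eqref{G2}. The upper bound $d_{cc}(p,q)\lesssim |x_1-x_2|+|y_1-y_2|^{1/(k+1)}$ comes from an explicit admissible path: move in $x$ from $(x_1,y_1)$ to a point at which $|x|$ is of order $|y_1-y_2|^{1/(k+1)}$, then sweep in $y$ (each $Y_j$ then has weight of order $|y_1-y_2|^{k/(k+1)}$, so the sweep takes time $\lesssim |y_1-y_2|^{1/(k+1)}$), then translate in $x$ to $(x_2,y_2)$. The matching lower bound follows the Grushin plane argument verbatim: for a horizontal curve $\gamma=(\x(t),\y(t))$ joining $p$ to $q$ one has $\mathrm{length}(\gamma)\geq|x_1-x_2|$, and in addition either $|\x(t)|\leq K$ throughout (forcing $\mathrm{length}(\gamma)\geq K^{-k}|y_1-y_2|$) or a subarc reaches $|\x(t_0)|\geq K$ (forcing $\mathrm{length}(\gamma)\geq K-\min(|x_1|,|x_2|)$); optimizing $K$ gives an estimate of the same shape. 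In particular $d_{cc}|_\mathbb{A}$ is bi-Lipschitz equivalent to the snowflake $(\mathbb{R}^l,d_E^{1/(k+1)})$, which is doubling, so by Theorem \ref{Assouad} there is a bi-Lipschitz embedding $f\colon\mathbb{A}\to\mathbb{R}^{M_1}$. This verifies hypothesis (2).

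The main step is hypothesis (3): uniformly Christ-local bi-Lipschitz embeddability of $\Omega$. The dilation $\delta_\lambda$ together with the distance estimates above produces a Whitney-type decomposition $\W_\Omega$ of $\Omega$ whose members are pairwise related by some $\delta_{2^{j'-j}}$ followed by a $y$-translation, exactly as in Proposition \ref{p}. Consequently each thickened piece $Q^{**}$ is, up to a metric rescaling by $\delta_\lambda$, a \emph{fixed} compact subset $K\subset\mathbb{R}^n\times\mathbb{R}^l$ bounded away from $\mathbb{A}$, on which the Grushin-type metric is a smooth Riemannian metric with uniformly bounded geometry. I would then apply the partition-of-unity argument used in the Grushin plane proof: cover $K$ by finitely many balls $B_1,\ldots,B_N$ on which Riemannian coordinate charts $\varphi_i$ are $L$-bi-Lipschitz, choose cutoffs $u_i\in C_0^\infty(2B_i)$, $v_i\in C_0^\infty(5B_i)$ with $u_i|_{B_i}=1$, $v_i|_{4B_i}=1$, and define $\varphi=(\varphi_iu_i,\varphi_iv_i)_{i=1}^N$ into $\mathbb{R}^{2N(n+l)}$. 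The large-scale and small-scale dichotomy ($d(x,y)>3\diam(Q)$ versus $\leq 3\diam(Q)$) then gives bi-Lipschitz control with constants depending only on $K$, hence uniform in $Q$. Theorem \ref{main} now yields the desired global bi-Lipschitz embedding.

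The main obstacle I anticipate is the verification of the scale-invariant distance estimates on and near $\mathbb{A}$. Unlike the Grushin plane, we no longer have a single scalar weight $|x|$ but an $l$-tuple of monomial weights $x^{\alpha^j}$. The common homogeneity $|\alpha^j|=k$ is precisely what allows the single family of dilations $\delta_\lambda$ to act as $d_{cc}$-scalings, and hence what allows both the snowflake identification of $d_{cc}|_\mathbb{A}$ and the self-similarity of the Whitney cubes to be carried out with the same structure as in the Grushin plane case; the remaining content is routine bookkeeping with the exponent $1/(k+1)$ in place of $1/2$.
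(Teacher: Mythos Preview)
Your outline follows exactly the route the paper indicates (the paper's entire proof reads ``We can follow similar steps as the proof of Theorem~\ref{Gru}. We omit the proof.''), and for $n=1$ it is a correct elaboration of the Grushin-plane argument with the exponent $1/(k+1)$ in place of $1/2$.

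For $n\geq 2$, however, the step verifying hypothesis~(3) has a genuine gap. Your set $\mathbb{A}=\{0\}\times\mathbb{R}^l$ is \emph{not} the full singular locus of the Grushin-type structure: the horizontal distribution drops rank wherever some monomial $x^{\alpha^j}$ vanishes, i.e.\ on the union of coordinate hyperplanes $\{x_i=0\}$ for every $i$ occurring in some $\alpha^j$. Consequently your assertion that each rescaled $Q^{**}$ is a compact set ``on which the Grushin-type metric is a smooth Riemannian metric'' fails. Concretely, take $n=2$, $l=1$, $\alpha^1=(1,1)$: a Whitney cube centered near $(1,0,0)$ is at $d_{cc}$-distance $1$ from your $\mathbb{A}$, yet sits on the singular hyperplane $\{x_2=0\}$, and there $d_{cc}\bigl((1,0,0),(1,0,y)\bigr)\sim|y|^{1/2}$ rather than $\sim|y|$, so no Riemannian chart argument applies. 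The companion claim that the Whitney cubes are ``pairwise related by some $\delta_{2^{j'-j}}$ followed by a $y$-translation'' breaks for the same reason: after normalizing the scale, the center lies at an arbitrary point of $S^{n-1}$, and the available isometries (dilations, $y$-translations, sign changes $x_i\mapsto -x_i$) do not act transitively there. A natural repair is to note that, after rescaling, every $Q^{**}$ lands in a single fixed compact annulus $K_0=\{c\leq|x|\leq C,\ |y|\leq C'\}$, so it suffices to embed $K_0$ once; but $K_0$ still meets the hyperplanes $\{x_i=0\}$, and embedding it forces one into an induction on $n$ (or on the number of singular hyperplanes). The paper's one-line proof leaves this inductive layer implicit as well.
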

\begin{proof}
We can follow similar steps as the proof of Theorem \ref{Gru}. We omit the proof.
\end{proof}
\section{Questions and Remarks}
So far we have given a characterization of  Euclidean bi-Lipschitz embeddability of uniformly perfect metric spaces supporting a doubling measure. The hypothesis in Theorem \ref{main} is based on a Christ-Whitney decomposition deduced from uniform perfectness and existence of a doubling measure. We emphasize that  uniform perfectness is only used for existence of a Christ-Whitney decomposition.

\begin{que}
Can the condition of  uniform perfectness be weakened?
\end{que}

From Theorem \ref{main}, the dimension $M_1+M\,M_2+1 $ of the Euclidean space depends on the bi-Lipschitz constant $L_1$ and the doubling constant of $\mu$.  However, the number of colors $M$ is not optimal. Thus, the following question naturally arises.
\begin{que}
What is the minimal dimension of Euclidean space into which a metric space satisfying the conditions in Theorem \ref{main} bi-Lipschitzly embeds?
\end{que}

As an application of Theorem \ref{main}, we have considered  the Grushin space. We now can consider 
the space of Grushin type with \emph{extended} horizontal distribution on $\R^n \times \R^l$.
\begin{defn}
The \emph{extended space of Grushin type} is $\mathbb R^n \times \mathbb R^l$ for  $n,\, l \in \mathbb N$ with horizontal distribution spanned by
$X_i$ and $Y_j$ for  $i=1,\;2,\;,\cdots,\; n$ and  $j=1,\;2,\;,\cdots,\; l$
\begin{equation*}
X_i=\dfrac{\partial}{\partial x_i}\;\;\;\; \text{and} \;\;\;\;Y_j=x^{{\alpha}^j  } \dfrac{\partial }{\partial y_j}.
\end{equation*}
\end{defn}
\begin{rem}
We emphasize that the lengths $|{\alpha}_i^j|$ can be distinct in Definition \ref{GS}. 
\end{rem}
 It seems that we can follow similar steps to prove embeddability. However, some of the technical details must be checked.
\begin{conje}
The \emph{extended space of Grushin type} equipped with Carnot-Carath\'{e}odory distance admits a bi-Lipschitz embedding into Euclidean space.
\end{conje}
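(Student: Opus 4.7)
The plan is to apply Theorem \ref{main} to $\mathbb{G}_{\mathrm{ext}}=\mathbb{R}^n\times\mathbb{R}^l$, taking as the closed subset $Y$ the singular locus
\[
\mathbb{A}:=\{(x,y):x^{\alpha^j}=0\text{ for some }j\}=\bigcup_{j}\bigcup_{i:\alpha_i^j>0}\{x_i=0\},
\]
and as $\Omega$ its complement. The overall structure mirrors the proof of Theorem \ref{Gru}, but two essential new features must be handled: the loss of a single global dilation adapted to all coordinates simultaneously, and a singular set with multiple components of potentially different degeneracy.

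First, I would verify the three hypotheses of Theorem \ref{main}. The H\"ormander condition on $X_i,Y_j$ still holds since iterated brackets span the tangent space, so $(\mathbb{G}_{\mathrm{ext}},d_{cc})$ is complete and uniformly perfect, and Lebesgue measure is locally doubling by a Ball-Box theorem adapted to the anisotropic weight system. As in Remark \ref{Grushinrem}, one first works on relatively compact pieces and then globalizes using the partial dilations described below.

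Second, I would derive box-type distance estimates for $d_{cc}$ in the spirit of \eqref{G2}. Even though no single dilation rescales all coordinates compatibly, each coordinate $y_j$ admits its own anisotropic scaling $\delta^j_\lambda(x,y)=(\lambda x, y_1,\ldots,\lambda^{|\alpha^j|+1}y_j,\ldots,y_l)$ that is compatible with $X_i$ and $Y_j$. Chaining these scalings with the triangle inequality applied to subcurves, in parallel with the argument proving \eqref{G2}, yields the two-sided comparison
\begin{equation*}
d_{cc}((x^1,y^1),(x^2,y^2))\ \asymp\ |x^1-x^2|+\sum_{j=1}^{l}\min\!\left(|y_j^1-y_j^2|^{1/(|\alpha^j|+1)},\ \frac{|y_j^1-y_j^2|}{\max(|(x^1)^{\alpha^j}|,|(x^2)^{\alpha^j}|)}\right).
\end{equation*}
Restricted to $\mathbb{A}$, this makes $(\mathbb{A},d_{cc})$ a finite union of closed sets, each bi-Lipschitz equivalent to a product of snowflaked Euclidean factors with snowflake exponents $1/(|\alpha^j|+1)$. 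Assouad's theorem (Theorem \ref{Assouad}) applied componentwise, combined with a finite patching of the components along their intersections, then produces a bi-Lipschitz embedding $f:\mathbb{A}\to\mathbb{R}^{M_1}$ for some $M_1$ depending only on $n$, $l$, and the $|\alpha^j|$.

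Third, I would construct a Christ-Whitney decomposition of $\Omega$ and verify uniformly Christ-local bi-Lipschitz embeddability. For each $Q\in \M_{\Omega}$ the set $Q^{**}$ lies at $d_{cc}$-distance comparable to $\diam(Q)$ from $\mathbb{A}$, so it is contained in the smooth locus where the horizontal distribution spans the tangent space. Applying the appropriate $\delta^j_\lambda$ together with a $y$-translation matched to $Q$ brings $Q^{**}$ to a bounded subset of a Riemannian manifold whose metric tensor has eigenvalues pinched away from $0$ and $\infty$ by constants depending only on the data. The cover-and-cutoff argument from the proof of Theorem \ref{Gru} then produces the required uniformly Christ-local bi-Lipschitz embeddings into a fixed $\mathbb{R}^{M_2}$, and Theorem \ref{main} assembles these into a global bi-Lipschitz embedding.

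The main obstacle is the box-type estimate in the second step. The absence of a uniform dilation means one must optimize independently over the $y$-directions, constructing near-geodesics that switch between phases where a given $y_j$-coordinate is either driven by the cost term $|y_j^1-y_j^2|/\max(|(x^1)^{\alpha^j}|,|(x^2)^{\alpha^j}|)$ or by an initial detour raising $|x|$ to a more economical scale. Verifying that this heterogeneous optimization produces a single quasi-metric genuinely comparable to $d_{cc}$, and that the resulting snowflake decomposition of $\mathbb{A}$ fits together along the intersections of its irreducible components, is the technical heart of the argument; once it is in place the rest is routine application of Theorem \ref{main}.
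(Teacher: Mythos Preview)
The paper does not prove this statement; it is explicitly recorded as a \emph{conjecture} (Conjecture 5.4), preceded by the remark ``It seems that we can follow similar steps to prove embeddability. However, some of the technical details must be checked.'' There is therefore no proof in the paper against which to compare your proposal.

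Your outline is consonant with the strategy the paper gestures toward: reduce to Theorem \ref{main} with $Y$ the singular locus, embed $Y$ via Assouad-type snowflake arguments, and verify uniformly Christ-local embeddability on the Riemannian part of $\Omega$. You correctly identify the central difficulty, namely the absence of a single global dilation when the multidegrees $|\alpha^j|$ differ. One caution about your sketch: the ``partial dilations'' $\delta^j_\lambda(x,y)=(\lambda x,\ldots,\lambda^{|\alpha^j|+1}y_j,\ldots)$ you propose scale the $x$-variables and therefore rescale \emph{every} $Y_k$, not just $Y_j$, so they are not homotheties of $d_{cc}$ and cannot be chained as freely as you suggest. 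The box-type estimate and the embedding of $\mathbb{A}$ (which is a union of coordinate hyperplanes with nontrivial mutual intersections, not a single snowflaked line) are genuine open technical points, exactly as the paper indicates; your proposal is a plausible program rather than a proof, which is the status the paper itself assigns to the statement.
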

In the case of spaces of Grushin type,  horizontal distributions are good enough to have uniformly Christ-local embeddings. Therefore, the following problem naturally comes up.
\begin{prob}
Find sufficient conditions on a higher dimensional horizontal distribution in a given sub-Riemannian manifold so as to guarantee the existence of uniformly Christ-local bi-Lipschitz embeddability.
\end{prob}
Even more generally,  we meet the following problem:
\begin{prob}
\label{que515}
Characterize  Christ-local bi-Lipschitz embeddability.
\end{prob}
If Problem \ref{que515} were solved, then we could  characterize bi-Lipschitz embeddable metric spaces with geometric and analytic criteria.  Therefore, we could determine which metric spaces admit a bi-Lipschitz embedding and we can classify up to bi-Lipschitz equivalence those metric spaces which are subsets of Euclidean space.
\begin{prob}
Find other examples of sub-Riemannian manifolds that satisfy conditions in Theorem \ref{main}  and hence, embed bi-Lipschitzly into Euclidean space. 
\end{prob}

 \bibliographystyle{plain}
\bibliography{PAPER.bib}
\end{document}